\documentclass[12pt]{amsart}
\usepackage{amsmath,bbm, color}
\usepackage{latexsym,mathrsfs,bm}
\usepackage{hyperref}

\usepackage{url}
\usepackage[english]{babel}
\usepackage{paralist}
\usepackage{amsthm,amsfonts,amssymb,amsmath}
\usepackage{comment}

\usepackage[latin1]{inputenc}

\newcommand{\tRe}{\textup{Re }}
\newcommand{\tIm}{\textup{Im }}

\newcommand{\sumstar}{\sideset{}{^*}\sum}
\newcommand{\sumh}{\sideset{}{^h}\sum}
\newcommand{\sumd}{\sideset{}{^d}\sum}

\newcommand{\es}[1]{\begin{equation}\begin{split}#1\end{split}\end{equation}}
\newcommand{\est}[1]{\begin{equation*}\begin{split}#1\end{split}\end{equation*}}

\renewcommand{\mod}[1]{~\pr{\textnormal{mod}~#1}}
\newtheorem{thm}{Theorem}[section]
\newtheorem{corollary}[thm]{Corollary}
\newtheorem{prop}[thm]{Proposition}
\newtheorem{lem}[thm]{Lemma}
\newtheorem{lemma}[thm]{Lemma}

\theoremstyle{remark}

\newtheorem{rem*}{Remark}
\newcommand{\pr}[1]{\left( #1\right)}

\newcommand{\e}[1]{\operatorname{e}\pr{ #1}}

\newcommand{\bfrac}[2]{\left(\frac{#1}{#2}\right)}

\newcommand{\lt}{\left}
\newcommand{\rt}{\right}

\newcommand{\bsm}{\lt(\begin{smallmatrix}}
\newcommand{\esm}{\end{smallmatrix}\rt)}

\newcommand{\cS}{\mathcal{S}}

\newcommand{\Dis}{\textup{Dis}}
\newcommand{\Ctn}{\textup{Ctn}}
\newcommand{\Hol}{\textup{Hol}}

\def\sumstar{\operatornamewithlimits{\sum\nolimits^*}}
\def\sumb{\operatornamewithlimits{\sum\nolimits^\flat}}
\def\sumh{\operatornamewithlimits{\sum\nolimits^h}}

\let\originalleft\left
\let\originalright\right
\renewcommand{\left}{\mathopen{}\mathclose\bgroup\originalleft}
\renewcommand{\right}{\aftergroup\egroup\originalright}

\numberwithin{equation}{section}

\begin{document}
\title[One-level densities of even and odd orthogonal families]{One-level densities of large even and odd orthogonal families of automorphic $L$-functions}

\date{
\today}

\author[V. Chandee]{Vorrapan Chandee}
\address{Mathematics Department \\ Kansas State University \\ Manhattan, KS 66503}
\email{chandee@ksu.edu}

\author[X. Li]{Xiannan Li}
\address{Mathematics Department \\ Kansas State University \\ Manhattan, KS 66503}
\email{xiannan@ksu.edu }

\author[M.B. Milinovich]{Micah B. Milinovich}
\address{Department of Mathematics \\ University of Mississippi, University, MS 38677}
\email{mbmilino@olemiss.edu }

\subjclass[2010]{11M50, 11F11, 11F72 }
\keywords{One-level density, low-lying zeros, automorphic $L$-functions, orthogonal family, non-vanishing.}

\allowdisplaybreaks
\numberwithin{equation}{section}
\begin{abstract}
We prove one-level density results for $L$-functions attached to primitive forms of level $q$, averaged over square-free $q$, conditional on the Generalized Riemann Hypothesis (GRH). We treat the even and odd orthogonal families separately and extend the support of the Fourier transform of the test function to $(-3,3)$. This extended support yields the strongest known non-vanishing results for these families of $L$-functions and their derivatives at the central point, conditional on GRH.

\end{abstract}

\dedicatory{\color{black}To Professor Roger Heath-Brown, on the occasion of his 75th birthday}

\maketitle

\section{Introduction} 

In this paper, we are interested in studying the distribution of zeros of $L$-functions.  Here, the guiding heuristic is the Katz-Sarnak  philosophy, which predicts that the zeros of $L$-functions behave like the eigenvalues of random matrices \cite{KaSa}.  Katz and Sarnak verified this for families of $L$-functions over function fields, but the analogous study over number fields is fraught with serious difficulties.

To state our results more precisely, let $S_k(q)$ be the space of cusp forms of fixed weight $k \ge 4$ for the group $\Gamma_0(q)$ with trivial nebentypus,  where as usual,
$$
\Gamma_0(q) := \left\{ \left( \left.{\begin{array}{cc}
   a & b \\
   c & d \\
  \end{array} } \right)  \ \right|  \  ad- bc = 1 , \ \  c \equiv 0 \mod q \right\}.
$$
Let $\mathcal H_k(q) \subset  S_k(q)$ be an orthogonal basis of the space of newforms consisting of Hecke cusp newforms, normalized so that the first Fourier coefficient is $1$. For each $f \in \mathcal H_k(q) $, we let $L(s, f)$ be the $L$-function associated to $f$, defined for Re$(s) > 1$ by
\es{ \label{def:Lsf}
L(s, f) = \sum_{n \geq 1} \frac {\lambda_f(n)}{n^{s}} &= \prod_p \pr{1 - \frac{\lambda_f(p)}{p^s} + \frac{\chi_0(p)}{p^{2s}}}^{-1} \\
&= \prod_p \pr{1 - \frac{\alpha_f(p)}{p^s} }^{-1} \pr{1 - \frac{\beta_f(p)}{p^s} }^{-1},}
where $\{\lambda_f(n)\}$ is the set of Hecke eigenvalues of $f$ and $\chi_0$ denotes the trivial character $\bmod q$. Since $f$ is a newform, $L(s, f)$ can be analytically continued to the entire complex plane and satisfies the functional equation
\es{ \label{eqn:fncL} \Lambda\pr{\tfrac 12 + s, f} = \varepsilon_{f,q} \, \Lambda\pr{ \tfrac 12 - s, f} }
where the completed $L$-function $\Lambda(s, f)$ is defined by
\est{\Lambda\pr{ \tfrac 12 + s, f} = \pr{\frac q{4\pi^2}}^{\frac s2} \Gamma \pr{s + \frac k2} L\pr{ \tfrac 12 + s, f},}
and the sign of the functional equation is $\varepsilon_{f,q}= \pm 1$.  We highlight the dependence on $q$ in $\varepsilon_{f,q}$ since, when $q$ is square-free, we have
\es{\label{def:epsf}
\varepsilon_{f, q} = i^k\mu(q)\lambda_f(q) \sqrt{q}.}
If $\varepsilon_{f,q} = 1,$ then $f$ is called an even form. Otherwise, it is called an odd form. Further, we let $\mathcal H_k^{\pm}(q)$ be a subset of $\mathcal H_k(q)$ of the forms $f$ with $\varepsilon_{f,q} = \pm 1$, respectively. {When $q$ is square-free, the identity \eqref{def:epsf} allows us to restrict sums over $f\in\mathcal H_k(q)$ to sums over $f\in\mathcal H_k^\pm(q)$.}   

Now let $\Phi(x)$ be an even Schwartz class function, and let 
$$
\widehat \Phi(t) = \int_{-\infty}^{\infty} \Phi(x) e^{-2\pi i x t} \> dx
$$
be the usual Fourier transform.  The \textit{Density Conjecture} from the Katz-Sarnak philosophy \cite{KaSa} predicts that

\es{\label{eqn:katzsarnakpredictionevenodd}
\lim_{q\rightarrow \infty} \frac{1}{\#\mathcal{H}_k^{\pm}(q) } \sum_{f\in \mathcal{H}_k^{\pm}(q)} & \sum_{\gamma_f} \Phi \left( \frac{\gamma_f }{2 \pi} \log q\right) \\
&= \int_{-\infty}^{\infty} \Phi(x) W_{G}(x)\,dx \\
&= \int_{-\infty}^{\infty} \Phi(x)\,dx + \frac{1}{2} \int_{-\infty}^{\infty} \widehat \Phi(x) \> dx \mp \frac 12 \int_{|x| > 1}  \widehat \Phi(x) \> dx ,
}
where $W_{even}(x) = 1 + \frac{\sin(2\pi x)}{2\pi x}$ and $W_{odd}(x) = 1 - \frac{\sin(2\pi x)}{2\pi x} + \delta_0(x)$ are the kernels associated with even and odd orthogonal symmetry, respectively. Here $\gamma_f$ runs through the imaginary parts of the nontrivial zeros of $L(s,f)$, each repeated according to multiplicity, and $\delta_0(x)$ is the Dirac delta distribution at $x=0$.  Assuming the Generalized Riemann Hypothesis (GRH) for certain $L$-functions, Iwaniec, Luo, and Sarnak~\cite{ILS} prove that \eqref{eqn:katzsarnakpredictionevenodd} holds when $\widehat \Phi$ is compactly supported in $(-2,2)$ and the variable $q$ runs through the square-free integers.  In \cite{BCL}, the first two authors with Baluyot verified this conjecture for the full orthogonal family, on average over all levels $q$ in an interval, where the support of $\widehat{\Phi}$ is extended to be compactly supported in $(-4, 4)$.  

However, an important motivating application of such results is towards understanding the non-vanishing of $L$-values at the critical point.  The sign of the functional equation implies that $L(1/2, f) = 0$ when $f$ is odd, while conjecturally we expect $L(1/2, f) \neq 0$ for most even forms $f$.  Thus, for this application, it turns out to be more natural and better optimized to study the even and odd families separately.  

To do this, it becomes necessary to include the root number in our average, and for that reason we restrict our attention to square-free level $q$.  In particular, we study a larger family of orthogonal $GL(2)$ $L$-functions by including an average over square-free $q$.  To this end, we fix a smooth function $\Psi$ compactly supported on $\mathbb{R}_{>0}$ with $\widetilde{\Psi}(2) \neq 0$, where
\est{\label{eqn:tildePsi}  \widetilde \Psi(s) = \int_0^{\infty} \Psi(x) x^{s -1} \> dx.}
Let
$$
\mathscr{OL}^{\pm}(Q) := \frac{1}{N^\pm(Q)}\sumb_q \Psi\bfrac{q}{Q} \sum_{f \in \mathcal H_{k}^{\pm}(q)} \sum_{\gamma_f} \Phi \left( \frac{\gamma_f }{2 \pi} \log Q\right),
$$
where $\sumb$ is the sum over square-free $q$, and for $k \geq 4$
\es{\label{def:NpmQ}
N^\pm(Q) := \sumb_q \Psi\bfrac{q}{Q} \sum_{f \in \mathcal H_{k}^\pm(q)} 1.
}

\begin{thm}\label{thm:main} Assume GRH. Let $\Phi$ be an even Schwartz function with $\widehat \Phi$ compactly supported in $(-3, 3).$ Then, with notation as before,
$$ \lim_{Q \rightarrow \infty} \mathscr{OL}^{\pm}(Q) = \int_{-\infty}^{\infty} \Phi(x)\,dx + \frac{1}{2} \int_{-\infty}^{\infty} \widehat \Phi(x) \> dx \mp \frac 12 \int_{|x| > 1}  \widehat \Phi(x) \> dx.
$$
\end{thm}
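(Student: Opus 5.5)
We follow the standard explicit-formula route of Iwaniec--Luo--Sarnak, with an extra average over square-free $q$. The aim is to reduce the theorem to asymptotics for harmonic-weighted sums of $\lambda_f(p)$ and $\lambda_f(p^2)$ twisted by the root number.

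\textbf{Explicit formula.} Applying the explicit formula to $L(s,f)$ gives, for each $f\in\mathcal H_k(q)$,
\[
\sum_{\gamma_f}\Phi\Big(\frac{\gamma_f}{2\pi}\log Q\Big)=\widehat\Phi(0)\frac{\log q}{\log Q}+\frac12\Phi(0)-\frac{2}{\log Q}\sum_{p}\frac{\lambda_f(p)\log p}{\sqrt p}\widehat\Phi\Big(\frac{\log p}{\log Q}\Big)+O\Big(\frac{1}{\log Q}\Big).
\]
Here the $p^2$ terms (via $\lambda_f(p^2)$) produce the $\tfrac12\Phi(0)=\tfrac12\int\widehat\Phi$ term. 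Higher prime powers and primes $p\mid q$ contribute $o(1)$ under GRH.

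\textbf{Reduction to weighted sums.} To restrict to $\mathcal H_k^\pm(q)$, we insert $\tfrac12(1\pm\varepsilon_{f,q})$ and use \eqref{def:epsf}, so that
\[
\tfrac12(1\pm\varepsilon_{f,q}) = \tfrac12\big(1\pm i^k\mu(q)\sqrt q\,\lambda_f(q)\big).
\]
We pass from the natural average to the harmonic average with weights $1/(4\pi\langle f,f\rangle)$ via the standard device of inserting $L(1,\mathrm{sym}^2 f)$ expressed as a short Dirichlet polynomial. We then isolate newforms from oldforms using the ILS sieving identity for $\Delta_{k,q}^*$. This reduces the theorem to evaluating
\[
\sumb_q\Psi\Big(\frac qQ\Big)\sum_p\frac{\log p}{\sqrt p}\widehat\Phi\Big(\frac{\log p}{\log Q}\Big)\Delta^*_{k,q}(p)\quad\text{and}\quad \sumb_q\Psi\Big(\frac qQ\Big)\mu(q)\sqrt q\sum_p\frac{\log p}{\sqrt p}\widehat\Phi\Big(\frac{\log p}{\log Q}\Big)\Delta^*_{k,q}(p,q),
\]
where $\Delta^*_{k,q}(m,n)=\sum^h_f\lambda_f(m)\lambda_f(n)$. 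The Petersson formula turns these into Kloosterman sums $S(p,q;c)$ and $S(p,1;c)$ with moduli $c\equiv0\bmod q$, weighted by $J_{k-1}(4\pi\sqrt{pq}/c)$, respectively $J_{k-1}(4\pi\sqrt p/c)$.

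\textbf{Non-root-number term.} For the first sum, the Kloosterman term is negligible for $p\ll Q^{4-\epsilon}$ after averaging over $q$, as in \cite{BCL}. This gives $\widehat\Phi(0)$, which is the $\int\Phi$ term.

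\textbf{Root-number term (main obstacle).} The second sum must produce $\mp\tfrac12\int_{|x|>1}\widehat\Phi$, and this is where support $(-3,3)$ is decided. Following ILS, we write $S(p,q;c)$ with $c=qr$ and open it in Dirichlet characters modulo $r$. We then exploit the average over square-free $q$, detecting square-freeness via $\sum_{d^2\mid q}\mu(d)$. Next we apply Poisson summation in $q$ (or equivalently the functional equation of the twisted sums) to dualize the $q$-sum. In the dual variable, the principal-character contribution produces an integral against $J_{k-1}$. Evaluating it via the Mellin transform of the Bessel function gives the main term $\mp\tfrac12\int_{|x|>1}\widehat\Phi(x)\,dx$, which arises from primes $p>Q$.

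The remaining non-principal characters give sums of the form $\sum_p \chi(p)\log p$ over primes. Under GRH these have square-root cancellation, and the average over $q$ adds a further saving of roughly $Q^{1/2}$ compared with ILS, which lets the error stay $o(1)$ for $p\le Q^{3-\epsilon}$. I expect the main difficulty to be bounding the off-diagonal terms after Poisson, including the contribution of large $d$ from the square-free condition, uniformly up to $p\approx Q^{3}$. This is the step where I would carefully balance the parameters $r$, $d$ and the dual length.
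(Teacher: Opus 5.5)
\textbf{There is a genuine gap.} Your outline agrees with the paper up to the decisive step: the explicit formula, the weights $\tfrac12(1\pm\varepsilon_{f,q})$ with $\varepsilon_{f,q}=i^k\mu(q)\lambda_f(q)\sqrt q$, the ILS newform formula with the $\sum_m\Delta_M(m^2,\cdot)/m$ device, the BCL Kuznetsov argument for the sum without root number, opening the Kloosterman sum into characters modulo $c$, and extracting $\int_1^\infty\widehat\Phi=\tfrac12\int_{|x|>1}\widehat\Phi$ from the principal character via Mellin transforms. (One small slip: the prime sum without root number has no diagonal term, since $p\neq m^2$, so it is $o(1)$. The $\int\Phi$ comes from the conductor term, not from that sum.)

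The gap is the non-principal characters. This is exactly where $(-3,3)$ is decided, and you leave it to ``balancing parameters''. After separating the Gauss sums (Lemma~\ref{lem:sumseparatecM}), the root number's $\mu(M)$ combines with the $\mu(M)$ coming from the Kloosterman sum. What you then need is square-root cancellation in $\sum_M \mu^2(M)\overline{\chi}(M)\sqrt M\,\Psi(LM/Q)(\cdots)$. It must hold uniformly for non-principal $\chi \bmod c$ with $c$ as large as $mL\sqrt{P/Q}\approx Q^{1-\delta/2}$, the Bessel transition range, which dominates. That uniform saving \emph{is} the extra $Q^{1/2}$ you attribute to the $q$-average.

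Poisson summation in $q$ does not deliver this saving in the range that matters. After writing $\mu^2(M)=\sum_{d^2\mid M}\mu(d)$, the inner sums have length $Q/(Ld^2)$. Once this is shorter than the modulus $c$, i.e.\ for $d\gg (Q/c)^{1/2}$ (already $d\approx Q^{\delta/4}$ for the largest $c$), the dual sum is long and Poisson only returns a P\'olya--Vinogradov-type bound. Bounding the tail $d>D$ trivially, even with GRH on the prime sum, costs about $\sqrt P/(QD)$, which needs $D\gg Q^{1/2-\delta/2}$. So no balancing of $c$, $d$ and the dual length covers $Q^{\delta/4}\ll d\ll Q^{1/2-\delta/2}$. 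The functional equation by itself gives only convexity, losing $c^{1/4}$, so it is not an equivalent substitute.

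The paper instead applies GRH to the $q$-sum as well. It decouples $p$ and $M$ by Mellin-inverting $J_{k-1}$ and $\Psi$. It then writes $\sum_M\mu^2(M)\overline{\chi}(M)M^{-z}=L(z,\overline{\chi})H(z,\overline{\chi})$ with $H$ absolutely convergent for $\mathrm{Re}\,z>1/2$, which handles the square-free condition for all $d$ at once. It shifts to $\mathrm{Re}\,z=\tfrac12+O(\epsilon_0)$, with no pole since $\chi\neq\chi_0$, and uses the GRH Lindel\"of bound $L(\tfrac12+\sigma+it,\chi)\ll (c(1+|t|))^{\epsilon}$. This is combined with GRH for the prime sum and $\sum_\chi|G_{\overline\chi}(L)G_{\overline\chi}(m^2)|\le\phi(c)^2$. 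The range $c\ll mL\sqrt{P/Q}$ then contributes $\ll\sqrt P\,Q^{-3/2+O(\epsilon_0)}$, which is precisely where $P\ll Q^{3-\delta}$ is used. Larger $c$ is handled via the power series of $J_{k-1}$ and the same $L$-function bound.
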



We can write 
\es{\label{OLsplit}
\mathscr{OL}^{\pm}(Q) := \frac{1}{N^\pm(Q)}\sumb_q \Psi\bfrac{q}{Q} \sumh_{f \in \mathcal H_{k}(q)} \left( \frac{1}{2} \pm \frac{\varepsilon_{f, q}}{2}\right) \sum_{\gamma_f} \Phi \left( \frac{\gamma_f }{2 \pi} \log Q\right).
}
Our main result, Theorem~\ref{thm:main}, follows from the explicit formula and Proposition \ref{prop:main} below. The proof of Theorem~\ref{thm:main} will be given in Section~\ref{sec:proofofMainTheorem}.

{\prop\label{prop:main} 
Assume GRH. Let $\Phi$ be an even Schwartz function with $\widehat \Phi$ compactly supported in $(-3, 3).$ Moreover, let 
\es{\label{def:cfn}
c_f(n) = \left\{ \begin{array}{cc}
     \alpha_f(p)^\ell + \beta_f(p)^\ell  & \ \textrm{if} \  n = p^\ell \\
     0 & \ \textrm{otherwise}, 
\end{array}\right. }
where $\alpha_f$ and $\beta_f$ are as in \eqref{def:Lsf}. Then 
\es{ \label{prop:withoutepsf} \lim_{Q \rightarrow \infty }\frac{1}{N^{\pm}(Q)}  \sumb_q \Psi\bfrac{q}{Q}  \sum_{f \in \mathcal{H}_k(q)} \sum_n \frac{\Lambda(n)c_f(n)}{\sqrt{n}} \widehat \Phi\left( \frac{\log n}{\log Q}\right) = - \frac{1}{2} \int_{-\infty}^{\infty} \widehat \Phi(x) \> dx,  }
and 
\es{ \label{prop:withepsf} \lim_{Q \rightarrow \infty }\frac{1}{N^{\pm}(Q)}  \sumb_q \Psi\bfrac{q}{Q}  \sum_{f \in \mathcal{H}_k(q)} \varepsilon_{f, q} \sum_n \frac{\Lambda(n)c_f(n)}{\sqrt{n}} \widehat \Phi\left( \frac{\log n}{\log Q}\right) = \frac{1}{2} \int_{|x| > 1} \widehat \Phi(x) \> dx.  }
 Here $\Lambda(n)$ is the von Mangoldt function, which is $\log p$ when $n = p^\ell$ and $0$ otherwise.
}

Although the support restriction of $(-3, 3)$ in Theorem \ref{thm:main} goes beyond the results for fixed level in Iwaniec-Luo-Sarnak \cite{ILS}, Barrett et al. \cite{BBDDM} and Cohen et al. \cite{Coh et al}, it is more restrictive than the recent result of the first two authors with Baluyot  \cite{BCL}.  The main obstacle is in understanding the average with the addition of the root number.  It seems to us that this is a natural barrier, arising fundamentally from the fact that we do not see any reduction in conductor in our methods when the support goes beyond $(-3, 3)$.  It is possible that further improvements can be made by using a more powerful Kutznetsov's trace formula on $GL(3)$.  

As mentioned before, one important motivation here is towards non-vanishing, where our results lead to larger proportions of non-vanishing than would be achieved by directly applying the result of Baluyot with the first two authors \cite{BCL}.  The details are in the next section. 

\subsection{Applications to non-vanishing of central values}
It is well-known that one-level density theorems can be used to study the proportion of non-vanishing of $L$-functions and their derivatives in a family at the central point. Using Theorem \ref{thm:main}, we show that at least 69.8\% of the forms in our even orthogonal family forms have $L(\frac{1}{2},f) \ne 0$.  The result with support extended to $(-4, 4)$ from \cite{BCL} only gives a proportion of $(50\!-\! \epsilon)\%$, for any $\epsilon>0,$ following the same calculations as Iwaniec-Luo-Sarnak \cite{ILS}.

By the sign of the functional equation, the forms in our odd orthogonal family satisfy $L(\frac{1}{2},f) = 0$. So for this family, the non-vanishing of the derivative of $L$-function at the central is the more interesting question and we show that at least 97.8\% of the forms in this family have $L'(\frac{1}{2},f) \ne 0$.  

\begin{corollary} \label{non-vanishing} Assume GRH. Then we have
\[
\begin{split}
\liminf_{Q \rightarrow \infty } \frac{1}{N^{+}(Q)}  \sumb_q & \Psi\bfrac{q}{Q} \, \Big| \left\{ f \in \mathcal H_k^{+}(q) : L(\tfrac{1}{2},f) \ne 0 \right\} \Big| 
\\
& \qquad \qquad \ge  \ \frac{19}{24} - \frac{1}{6\!+\!9 \sqrt{2} \tan(\frac{1}{2\sqrt{2}})}  = 0.69818\ldots
\end{split}
\]
and
\[
\begin{split}
\liminf_{Q \rightarrow \infty } \frac{1}{N^{-}(Q)}  \sumb_q & \Psi\bfrac{q}{Q} \, \Big| \left\{ f \in \mathcal H_k^{-}(q) : L'(\tfrac{1}{2},f) \ne 0 \right\} \Big| 
\\
& \qquad \qquad \ge \ \frac{11}{8} - \frac{1}{2\!+\!\sqrt{2} \tan(\frac{1}{2\sqrt{2}})}  = 0.97847\ldots. 
\end{split}
\]
\end{corollary}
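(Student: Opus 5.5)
The plan is to deduce Corollary~\ref{non-vanishing} from Theorem~\ref{thm:main} by the positivity argument of Iwaniec--Luo--Sarnak \cite{ILS}, then optimize the test function with $\widehat\Phi$ supported in $[-\nu,\nu]$ and let $\nu\to 3^-$.

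\textbf{Reduction to a weighted average of $\Phi$.} Fix an even Schwartz function $\Phi$ with:
\begin{itemize}
\item $\Phi(x)\ge 0$ for all real $x$;
\item $\Phi(0)=1$;
\item $\operatorname{supp}\widehat\Phi\subset[-\nu,\nu]$ for some $\nu<3$.
\end{itemize}
Under GRH every $\gamma_f$ is real. Let $m_f$ be the order of vanishing of $L(s,f)$ at $s=\tfrac12$. Dropping all zeros off the central point, nonnegativity gives
\[
m_f\le \sum_{\gamma_f}\Phi\Big(\frac{\gamma_f}{2\pi}\log Q\Big).
\]
By the functional equation \eqref{eqn:fncL}, $m_f$ is even for $f\in\mathcal H_k^+(q)$ and odd for $f\in\mathcal H_k^-(q)$. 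So an even $f$ with $L(\tfrac12,f)=0$ has $m_f\ge2$, and an odd $f$ with $L'(\tfrac12,f)=0$ has $m_f\ge 3$. Write $p^+$ for the weighted proportion of even forms with $L(\tfrac12,f)=0$, and $p^-$ for the weighted proportion of odd forms with $L'(\tfrac12,f)=0$. Averaging with the weights $\Psi(q/Q)$ and applying Theorem~\ref{thm:main} gives
\[
2\limsup p^+\le \int\Phi W_{even},\qquad 1+2\limsup p^-\le \int \Phi W_{odd}=1+\int\Phi\Big(1-\frac{\sin 2\pi x}{2\pi x}\Big)dx,
\]
where the delta mass in $W_{odd}$ contributes $\Phi(0)=1$. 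Hence the two non-vanishing proportions are at least
\[
1-\tfrac12\int\Phi W_{even}
\qquad\text{and}\qquad
1-\tfrac12\int\Phi\big(W_{odd}-\delta_0\big),
\]
respectively.

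\textbf{Fourier-side form of the functional.} By Plancherel, with the normalizations of \eqref{eqn:katzsarnakpredictionevenodd},
\[
\int\Phi W_{\pm}=\widehat\Phi(0)+\tfrac12\Phi(0)\mp\tfrac12\int_{|x|>1}\widehat\Phi(x)\,dx .
\]
The task is therefore to minimize
\[
\mathcal F_\pm(\Phi)=\widehat\Phi(0)\mp\tfrac12\int_{|x|>1}\widehat\Phi ,
\]
subject to $\Phi\ge0$, $\Phi(0)=\int\widehat\Phi=1$, and $\operatorname{supp}\widehat\Phi\subset[-\nu,\nu]$.

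Nonnegativity is enforced by writing $\widehat\Phi=g*\tilde g$, where $g$ is real and even, $\operatorname{supp} g\subset[-\nu/2,\nu/2]$, and $\tilde g(x)=g(-x)$. Then $\Phi=|\check g|^2\ge0$. Each term becomes a quadratic form in $g$:
\begin{itemize}
\item $\widehat\Phi(0)=\int g^2$;
\item $\Phi(0)=\big(\int g\big)^2$;
\item $\int_{|x|>1}\widehat\Phi=\iint g(u)g(v)\,\mathbf 1_{|u-v|>1}\,du\,dv$.
\end{itemize}
Minimizing the ratio of quadratic forms leads to the Euler--Lagrange integral equation
\[
g(u)\mp\tfrac12\int_{|u-v|>1,\ |v|\le\nu/2}g(v)\,dv=\lambda .
\]
For $\nu/2\in(1,\tfrac32)$ this equation is piecewise: on $|u|\le \nu/2-1$ the kernel term vanishes identically, while on $\nu/2-1<|u|\le\nu/2$ it involves tail integrals of $g$. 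Differentiating twice converts the equation into a linear ODE system with constant coefficients. In the minus case (the even family) this system has trigonometric solutions whose frequency produces the $\tan(1/(2\sqrt2))$ appearing in the corollary. In the plus case the analogous computation should give the other constant.

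\textbf{Main obstacle.} The hardest step is solving this extremal problem exactly and verifying that the resulting $g$ is admissible (continuous, with $\widehat\Phi$ supported in $[-\nu,\nu]$). It then remains to evaluate the functional at the optimizer and let $\nu\to3^-$ to obtain $\tfrac{19}{24}-\big(6+9\sqrt2\tan\tfrac1{2\sqrt2}\big)^{-1}$ and $\tfrac{11}{8}-\big(2+\sqrt2\tan\tfrac1{2\sqrt2}\big)^{-1}$. My first attempt would be the ansatz: $g$ constant on the inner interval $|u|\le \nu/2-1$ and of the form $A\cos(\omega u)+B\sin(\omega u)+C$ on the outer pieces, with $A,B,C,\omega$ fixed by the matching conditions from the integral equation. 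If a closed form resists, the corollary still needs only the inequality for this explicit $g$. That can be checked directly, since any admissible $g$ gives a valid lower bound.
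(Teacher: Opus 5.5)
Your reduction is correct and is the paper's own argument. It uses positivity of $\Phi$ at the ordinates (real under GRH) and the parity of $\mathrm{ord}_{s=1/2}L(s,f)$. This gives the lower bounds $1-\tfrac12 g^{+}$ and $1-\tfrac12 g^{-}$, where $g^{\pm}$ is the infimum of $\Phi(0)^{-1}\int\Phi(x)\{1\pm\frac{\sin 2\pi x}{2\pi x}\}\,dx$. The gap is the evaluation of $g^{\pm}$, which is all the corollary adds beyond that reduction. The paper does not derive these values: it imports them from Freeman--Miller \cite[Thm.~1.1, Cor.~1.2]{FM2}, together with the approximation argument of \cite[Thm.~2]{CCM}.

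Your own route to the values rests on a wrong picture of the Euler--Lagrange equation. For $a=\nu/2\in(1,\tfrac32]$ and $|u|\le a$, the set $\{v:|v|\le a,\ |u-v|>1\}$ equals $[-a,u-1)\cup(u+1,a]$. \emph{Both} pieces are nonempty exactly when $|u|<a-1$; for instance, at $u=0$ the kernel term is $\int_{1<|v|\le a}g$. So for $\nu>2$ the kernel never vanishes identically, and the inner interval is precisely where two tails contribute. Your ansatz ($g$ constant inside, outer pieces coupled only to each other) is the Iwaniec--Luo--Sarnak regime $\nu<2$. It cannot satisfy the equation here, and it discards the mechanism that produces the $1/\sqrt2$ in $\tan(1/(2\sqrt2))$. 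That constant occurs in \emph{both} families, not only the even one.

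To fill the gap, work at $\nu=3$, where unit shifts permute the intervals $[-\tfrac32,-\tfrac12]$, $[-\tfrac12,\tfrac12]$, $[\tfrac12,\tfrac32]$. For $\nu\in(2,3)$ they do not, which is why letting $\nu\to3^-$ directly is awkward.
\begin{itemize}
\item Put $g_j(t)=g(t+j)$ for $|t|\le\tfrac12$. Differentiating gives $g_0'=\tfrac12(g_{-1}-g_1)$, $g_1'=\tfrac12 g_0$, $g_{-1}'=-\tfrac12 g_0$ for the even family, with all signs reversed for the odd family.
\item In both cases $g_0''=-\tfrac12 g_0$, so $g_0=A\cos(t/\sqrt2)$, which is not constant. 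Then $g_{\pm1}(t)=B\pm\tfrac{A}{\sqrt2}\sin(t/\sqrt2)$, with the sine sign reversed for the odd family.
\item Evaluating the undifferentiated equation at $t=-\tfrac12$ gives $B=A(\cos\theta\pm\tfrac1{\sqrt2}\sin\theta)$, where $\theta=\tfrac1{2\sqrt2}$ and the upper sign is for the even family.
\item Since the equation holds on the support, the quadratic form equals $\lambda\int g$. The quantities $\tfrac12+\lambda/\int g$ and $-\tfrac12+\lambda/\int g$ are then exactly $g^{+}=\tfrac5{12}+\tfrac{\sqrt2}{3\sqrt2+9\tan\theta}$ and $g^{-}=-\tfrac34+\tfrac{2}{2+\sqrt2\tan\theta}$.
\end{itemize}
Minimality is not needed, since you only need a lower bound. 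Finally, this $g$ jumps at $\pm\tfrac32$, so $\Phi=|\check g|^2$ decays only like $x^{-2}$ and is not Schwartz, and $\operatorname{supp}\widehat\Phi=[-3,3]$ rather than a subset of $(-3,3)$. Verifying continuity does not suffice. You must dilate by $1-\eta$, mollify, and pass to the limit; this is the approximation step the paper cites.
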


In \textsection \ref{non-v}, we indicate how to deduce Corollary \ref{non-vanishing} from Theorem \ref{thm:main}. Using the non-optimal Fourier pair
\[
\Phi(x) = \Big(\frac{\sin 3\pi x}{3 \pi x} \Big)^2 \quad \text{and} \quad \widehat{\Phi}(\xi) = \frac{1}{3} \max\Big(1-\frac{|\xi|}{3},0\Big)
\]
(together with a standard approximation argument) in Theorem \ref{thm:main}, one can obtain the slightly weaker proportions $25/36=0.69444\ldots$ and $35/36=0.97222\ldots$ in place of $0.69818\ldots$ and $0.97847\ldots$, respectively. The better proportions stated in Corollary \ref{non-vanishing} are a result of a solution to a certain Fourier optimization problem that can be solved using methods in \cite{ILS} and \cite{FM2}  (see also \cite{CCM}).


\section{Notation and Preliminary Results} \label{sec:prelimresults}

Throughout the paper, we follow the standard convention in analytic number theory of letting $\epsilon$ denote an arbitrarily small positive real
number whose value may change from line to line. We always use $p$ to denote a prime number. 

In this section, we collect some lemmas that we will need in our proofs. 

\subsection{Orthogonality}
We begin by stating the orthogonality relations for our family. These are the standard Petersson's trace formula (e.g.,~see \cite{Iwaniec}), and a version of Petersson's trace formula that is restricted to newforms, that is due to Iwaniec, Luo, and Sarnak \cite{ILS} for square-free level.

Recall that $S_k(q)$ is the space of cusp forms of weight $k$ and level $q$. Let $B_k(q)$ be any orthogonal basis of $S_k(q)$. Define
\begin{equation*}
\Delta_q(m,n ) = \Delta_{k, q}(m, n) = \sumh_{f\in B_k(q)}\lambda_f(m)\lambda_f(n),
\end{equation*}
where the summation symbol $\sumh$ means that we are summing with the harmonic weight. In particular, for each $f$, we have an associated number $\alpha_f$. We define the harmonic average of $\alpha_f$ over $B_k(q)$ by 
\begin{equation}\label{eqn: harmonicweightsummation}
\sumh_{f \in  B_k(q)} \alpha_f = \frac{\Gamma(k-1)}{(4\pi)^{k-1}}\sum_{f \in B_k(q)} \frac{\alpha_f}{\|f\|^2},
\end{equation}
where  $\|f \|^2 = \int_{\Gamma_0(q) \backslash \mathbb H} |f(z)|^2 y^{k-2} \> dx \> dy. $ The usual version of Petersson's trace formula (e.g.,~see \cite{Iwaniec}) is the following.
\begin{lemma}\label{lem:usualPetersson}
If $m,n,q$ are positive integers, then
$$ \Delta_q(m, n) = \delta(m, n)+ 2\pi i^{-k} \sum_{c\geq 1} \frac{S(m, n;cq)}{cq} J_{k-1}\bfrac{4\pi \sqrt{mn}}{cq},
$$
where $\delta(m,n)=1$ if $m=n$ and is $0$ otherwise, 
\[
S(m,n;q)
=
\sum_{\substack{a \, (\mathrm{mod}\, q) \\ (a,q)=1}}
e\left(\frac{m a + n \bar{a}}{q}\right),
\quad \text{with } a \bar{a} \equiv 1 \textup{ mod }q,
\] 
is the usual Kloosterman sum, and $J_{k-1}$ is the Bessel function of the first kind.
\end{lemma}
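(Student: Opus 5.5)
The statement is the classical Petersson trace formula. I would prove it by computing the Petersson inner product of two Poincaré series in two ways, once spectrally and once by unfolding. For $m\ge 1$, define
\[
P_m(z)=\sum_{\gamma\in\Gamma_\infty\backslash\Gamma_0(q)} j(\gamma,z)^{-k}\, e(m\gamma z),
\]
where $j(\gamma,z)=cz+d$. This series converges absolutely for $k\ge 3$ and lies in $S_k(q)$; for $k=2$ one would use Hecke's trick, but here $k\ge4$.

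\textbf{Step 1: unfolding.} For any $f\in S_k(q)$ with Fourier expansion $f(z)=\sum_n a_f(n)e(nz)$, unfold against $\Gamma_\infty$ and integrate over $0\le x\le 1$, $y>0$. This gives
\[
\langle f,P_m\rangle=\frac{\Gamma(k-1)}{(4\pi m)^{k-1}}\,a_f(m).
\]
Expanding $P_n$ in the orthogonal basis $B_k(q)$ then gives
\[
P_n=\sum_{f}\frac{\langle P_n,f\rangle}{\|f\|^2}f .
\]
Hence the $m$-th Fourier coefficient of $P_n$ equals
\[
\frac{\Gamma(k-1)}{(4\pi n)^{k-1}}\sum_f \frac{\overline{a_f(n)}\,a_f(m)}{\|f\|^2}.
\]
Write $a_f(n)=a_f(1)\lambda_f(n)$, use that the $\lambda_f$ are real for the Hecke eigenbasis (or keep the conjugate for a general basis), and include the normalization $n^{(k-1)/2}$ in the coefficients. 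The spectral side then becomes exactly $\Delta_q(m,n)$ with the harmonic weight \eqref{eqn: harmonicweightsummation}, up to the explicit factor $(m/n)^{(k-1)/2}$, which cancels against the corresponding factor on the geometric side.

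\textbf{Step 2: geometric side.} Compute the Fourier coefficients of $P_n$ directly. The identity coset gives the term $\delta(m,n)$. The remaining cosets are parametrized by $c\equiv 0 \pmod q$, $c>0$, and $d \bmod c$ with $(d,c)=1$. Write
\[
\gamma z=\frac ac-\frac1{c(cz+d)},
\]
then unfold the $d$-sum modulo $c$ to turn the translations into an integral over $\mathbb R$. The $m$-th coefficient then becomes
\[
\sum_{c\equiv0\,(q)} S(m,n;c)\int_{-\infty+iy}^{\infty+iy} z^{-k}\,e\!\left(-\frac{n}{c^2z}-mz\right)dz .
\]
The inner integral is the standard Bessel integral. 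Shifting the contour and using the Schläfli/Hankel integral representation of $J_{k-1}$ evaluates it as
\[
2\pi i^{-k}\left(\frac mn\right)^{(k-1)/2}\frac1c\,J_{k-1}\!\left(\frac{4\pi\sqrt{mn}}{c}\right).
\]
Writing $c$ as $cq$ then yields the stated formula.

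\textbf{Main obstacle.} The only delicate points are bookkeeping. First, the normalizing constants, the factors $(4\pi)^{k-1}/\Gamma(k-1)$ and $(m/n)^{(k-1)/2}$, must match on both sides. Second, the Bessel integral evaluation must be justified, including the interchange of summation and integration, which is legitimate by absolute convergence for $k\ge4$. I would check the constants against the case $q=1$ in Iwaniec's \emph{Topics in Classical Automorphic Forms}, since this lemma is in fact just quoted from \cite{Iwaniec}.
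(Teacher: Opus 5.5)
Your proposal is correct. It is the standard Poincar\'e-series proof (unfolding for the spectral side, Kloosterman sums plus the Lipschitz/Bessel integral for the geometric side) from Iwaniec's book, and the paper offers no proof of its own here, simply quoting the formula from \cite{Iwaniec}. One bookkeeping slip: your geometric-side display drops the factor $c^{-k}$ coming from $(cz+d)^{-k}=c^{-k}(z+d/c)^{-k}$, since the integral as written equals $2\pi i^{-k}(m/n)^{(k-1)/2}c^{k-1}J_{k-1}(4\pi\sqrt{mn}/c)$, and only after multiplying by $c^{-k}$ do you get the $1/c$ you state, so your final formula is unaffected.
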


Lemma~\ref{lem:usualPetersson}, the Weil bound for Kloosterman sums, and standard facts about the Bessel function imply the following lemma (see Corollary 2.2 in \cite{ILS}).
\begin{lemma}\label{lem:petertruncate}
If $m,n,q$ are positive integers, then
\begin{equation*}
\Delta_q(m, n) = \delta(m, n) + O\left(\frac{\tau(q) (m, n, q)(mn)^{\epsilon}}{q ((m, q)+(n, q))^{1/2}} \bfrac{mn}{\sqrt{mn} + q}^{1/2} \right),
\end{equation*}
where $\tau(q)$ is the divisor function and $\delta(m,n)=1$ if $m=n$ and is $0$ otherwise.
\end{lemma}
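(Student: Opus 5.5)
Lemma~\ref{lem:petertruncate} follows from Lemma~\ref{lem:usualPetersson}; it suffices to bound the Kloosterman--Bessel sum
\[
2\pi i^{-k}\sum_{c\ge1}\frac{S(m,n;cq)}{cq}\,J_{k-1}\Big(\frac{4\pi\sqrt{mn}}{cq}\Big)
\]
by the stated error term. I would use two inputs. The first is the Weil bound in the form
\[
|S(m,n;c')|\le \tau(c')\,(m,n,c')^{1/2}\,c'^{1/2},
\]
with $c'=cq$. The second is the elementary Bessel bound $J_{k-1}(x)\ll \min(x,x^{1/2})$, valid since $k\ge 2$. More precisely I would use $J_{k-1}(x)\ll x^{k-1}\ll x$ for $x\le 1$ and $J_{k-1}(x)\ll x^{-1/2}$ for $x\ge1$; the cruder $J_{k-1}(x)\ll x^{1/2}$ actually suffices in that range too.

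Set $X=4\pi\sqrt{mn}$. Each term is then bounded by
\[
\frac{\tau(cq)(m,n,cq)^{1/2}}{(cq)^{1/2}}\min\Big(\frac{X}{cq},\Big(\frac{X}{cq}\Big)^{1/2}\Big).
\]
Summing over $c$, the terms with $cq\ge X$ contribute roughly $X\sum (cq)^{-3/2}$. The terms with $cq<X$ are handled with the $x^{1/2}$ bound, or alternatively by pulling out $(X/(cq))^{1/2}\le (X/q)^{1/2}$ and using a slightly stronger Bessel decay such as $J(x)\ll x^{-1/2}$, which makes that range converge. Combining the two ranges gives a total of
\[
\ll \frac{\tau(q)(mn)^\epsilon}{q}\,\frac{\sqrt{mn}}{(\sqrt{mn}+q)^{1/2}}.
\]
This equals $\dfrac{\tau(q)(mn)^\epsilon}{q}\Big(\dfrac{mn}{\sqrt{mn}+q}\Big)^{1/2}$, which is the claimed shape.

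The remaining step, and the main obstacle, is extracting the arithmetic factor $(m,n,q)/((m,q)+(n,q))^{1/2}$ instead of the trivial $(m,n,cq)^{1/2}$. Here I would use the sharper Weil bound that keeps track of the $q$-part. Factor the modulus $cq$ multiplicatively and look at the prime powers $p^a\,\|\,q$ with $p\mid (m,q)$ but $p\nmid n$, say. For these, the Kloosterman sum modulo $p^a$ degenerates to a Ramanujan sum, of size $\ll (m,p^a)$ times a saving. Carrying this out, following ILS Corollary 2.2, produces a factor $(m,n,q)\,(m,q)^{-1/2}$ (and symmetrically $(n,q)^{-1/2}$) relative to $q^{1/2}$. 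The gcd factors involving $c$ are absorbed into $(mn)^\epsilon$ after summing over $c$, using $\sum_{c}(m,n,c)^{1/2}c^{-1-\delta}\ll (mn)^\epsilon$.

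Since the paper explicitly cites Corollary 2.2 of \cite{ILS} for Lemma~\ref{lem:petertruncate}, I would ultimately defer to that computation for this bookkeeping.
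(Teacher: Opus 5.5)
Your route is the paper's: Lemma~\ref{lem:usualPetersson}, a Weil-type bound and Bessel bounds, as in \cite[Cor.~2.2]{ILS}. Your analytic half is fine. Splitting at $cq\asymp\sqrt{mn}$ with $J_{k-1}(x)\ll\min(x,x^{-1/2})$ gives $q^{-1}\big(mn/(\sqrt{mn}+q)\big)^{1/2}$. Your cruder $x^{1/2}$ for $x\ge1$ also works, costing only a $\log(2+\sqrt{mn}/q)$. Drop the ``pull out $(X/(cq))^{1/2}\le (X/q)^{1/2}$'' variant, which loses a factor $(X/q)^{1/2}$. You are also right that $|S(m,n;c)|\le\tau(c)(m,n,c)^{1/2}c^{1/2}$ alone cannot give the arithmetic factor. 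Indeed, $(m,n,q)\,((m,q)+(n,q))^{-1/2}$ can be smaller than $(m,n,q)^{1/2}$ by a factor of roughly $(m,q)^{-1/2}$, e.g.\ when $q\mid m$ and $(n,q)=1$.

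The gap is that this factor, the only non-routine content of the lemma, is never produced, and your sketch would not produce it as written. There are three problems with it.
\begin{enumerate}
\item Since $(c,q)$ may exceed $1$, the local modulus is $p^\beta$ with $\beta=v_p(cq)$, not $p^a\,\|\,q$.
\item In your case $p\mid m$, $p\nmid n$, the local sum is a Ramanujan sum only when $p^\beta\mid m$; otherwise it vanishes. Either way it has absolute value at most $1$, not ``$(m,p^a)$ times a saving''.
\item Primes with $p\mid(m,n)$ but $v_p(m)\ne v_p(n)$ also need a saving over Weil, and you do not treat them.
\end{enumerate}

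The statement you need is
\[
|S(m,n;c)|\le \tau(c)\,(m,n,c)\,c^{1/2}\,\max\{(m,c),(n,c)\}^{-1/2}.
\]
By twisted multiplicativity it suffices to take $c=p^\beta$; this is legitimate because the product of the local maxima is at least the global maximum. Write $\alpha=v_p(m)\le\gamma=v_p(n)$.
\begin{enumerate}
\item If $\alpha\ge\beta$, then $S=\phi(p^\beta)$.
\item If $\alpha<\beta$, then $S(m,n;p^\beta)=p^\alpha S(m',p^{\gamma-\alpha}n';p^{\beta-\alpha})$ with $p\nmid m'n'$.
\begin{enumerate}
\item For $\gamma=\alpha$, the usual Weil bound gives $|S|\le\tau(p^\beta)p^{(\alpha+\beta)/2}$.
\item For $\gamma>\alpha$ and $\beta-\alpha\ge2$, the sum vanishes: substitute $x\mapsto x(1+p^{\beta-\alpha-1}t)$ and note that $m'x^2\equiv p^{\gamma-\alpha}n'$ is insoluble modulo $p$.
\item For $\gamma>\alpha$ and $\beta=\alpha+1$, the sum equals $-p^\alpha$.
\end{enumerate}
\end{enumerate}
Each case is within the bound.

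Now apply it with modulus $cq$, using $(m,n,cq)\le(m,n,c)(m,n,q)$ and $\max\{(m,cq),(n,cq)\}\ge\tfrac12\big((m,q)+(n,q)\big)$. Your $c$-summation then goes through with an extra weight $(m,n,c)\tau(c)$, which is absorbed into $(mn)^{\epsilon}$. Citing \cite{ILS} for all of this, as the paper does, is legitimate. But then your sketch should not be presented as the argument.
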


For our purposes, we need to isolate the newforms of level $q$ to obtain the sum without the weight.  To be precise, recall that $\mathcal H_k(q)$ is the set of newforms of weight $k$ and level $q$ which are also Hecke eigenforms. We need a formula for
\begin{equation*}
\Delta_q^*( n) := \sum_{f\in \mathcal H_k(q)} \lambda_f(n).
\end{equation*}
We state the result from Proposition 2.11 in \cite{ILS} .
\begin{lemma}\label{lem:PeterssonILS}
Suppose that $n,q$ are positive integers such that $(n,q) | q^2$, and $q$ is square-free. Also, $k$ is a fixed weight. Then

\begin{align*}
\Delta_q^*(n) = \frac{k-1}{12}\sum_{\substack{LM = q }} \frac{\mu(L) M}{ \nu((n, L))}  \sum_{(m, q) = 1 }\frac{\Delta_M(m^2,n)}{m},
\end{align*}
where 
\es{\label{def:nu(j)} \nu(\ell) = \ell \prod_{p | \ell } \left( 1 + \frac 1p\right). }
\end{lemma}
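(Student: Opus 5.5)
The plan is to follow the Iwaniec--Luo--Sarnak construction, in four steps: decompose $S_k(q)$ into newforms and oldforms, invert over divisors of $q$ to isolate the harmonic newform sum, remove the harmonic weight using $L(1,\mathrm{sym}^2 f)$, and then recombine into Petersson sums $\Delta_M$.

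\emph{Step 1 (Atkin--Lehner decomposition).} Since $q$ is square-free, Atkin--Lehner theory gives
$$S_k(q)=\bigoplus_{LM=q}\ \bigoplus_{f\in\mathcal H_k(M)}\operatorname{span}\{f|_\ell:\ \ell\mid L\},\qquad f|_\ell(z)=\ell^{k/2}f(\ell z).$$
For each newform $f$ of level $M$, I would Gram--Schmidt the vectors $\{f|_\ell\}_{\ell\mid L}$. Their inner products satisfy $\langle f|_{\ell_1},f|_{\ell_2}\rangle=\|f\|^2\,\rho_f(\ell_1,\ell_2)$, where $\rho_f$ is multiplicative and given at each prime $p\mid L$ by $\lambda_f(p)/(p^{1/2}(1+1/p))$. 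Here $\|f\|$ means the norm taken on $\Gamma_0(q)$; it differs from the level-$M$ norm by the index $[\Gamma_0(M):\Gamma_0(q)]$. Because the pieces for different $p\mid L$ are independent, this yields an explicit orthonormal basis, as in ILS Lemma~2.4.

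\emph{Step 2 (Isolate the newform sum).} Write $\Delta_q(m,n)$ in this basis. The contribution of the old space coming from level $M$ should be a harmonic sum over $f\in\mathcal H_k(M)$ of $\lambda_f(m)\lambda_f(n)$, twisted by local factors at the primes $p\mid L$. When $(n,L)$ is divisible by $p$, these factors produce the $\nu((n,L))$ normalisation. Summing over the decomposition $LM=q$ and applying Möbius inversion over divisors of $q$ then expresses the harmonic newform sum $\sum^h_{f\in\mathcal H_k(q)}\lambda_f(m)\lambda_f(n)$ as $\sum_{LM=q}\mu(L)\,(\cdots)\,\Delta_M(\cdot,\cdot)$.

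\emph{Step 3 (Remove the harmonic weight).} For a newform of square-free level $q$,
$$\frac{\Gamma(k-1)}{(4\pi)^{k-1}\|f\|^2}=\frac{12\,\zeta^{(q)}(2)}{(k-1)\,q\,L^{(q)}(1,\mathrm{sym}^2 f)}\cdot(\text{local factors at } p\mid q).$$
Hence the unweighted sum is the weighted sum multiplied by $\frac{k-1}{12}\,q\,L(1,\mathrm{sym}^2 f)/\zeta^{(q)}(2)$. I would expand
$$\frac{L^{(q)}(1,\mathrm{sym}^2 f)}{\zeta^{(q)}(2)}=\sum_{(m,q)=1}\frac{\lambda_f(m^2)}{m},$$
which holds formally via the Hecke relations and is justified by evaluating at $1+\epsilon$ and letting $\epsilon\to0$.

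\emph{Step 4 (Recombine).} The product $\lambda_f(m^2)\lambda_f(n)$ is paired with Step~2, giving $\Delta_M(m^2,n)$. The factor $q=LM$ combines with the normalisations from Step~2 to leave the factor $M$ and the term $\nu((n,L))^{-1}$, which produces the stated formula.

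The main obstacle is Step~2 together with the bookkeeping in Step~4. One has to verify that the Gram--Schmidt coefficients, the Möbius inversion, and the level-dependence of the Petersson norm and of $L^{(q)}(1,\mathrm{sym}^2 f)$ cancel exactly, leaving precisely $\mu(L)M/\nu((n,L))$. I would first check the case $q=p$ prime, where the only decompositions are $(L,M)=(1,p)$ and $(p,1)$, and the level-$1$ oldforms $f,f|_p$ can be handled by hand. The general square-free case then follows by multiplicativity over $p\mid q$.
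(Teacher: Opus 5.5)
Your route (Atkin–Lehner basis, inversion over $LM=q$, removal of the harmonic weight through $L(1,\mathrm{sym}^2 f)$) is the Iwaniec–Luo–Sarnak argument. The paper gives no proof of its own and simply quotes ILS, Proposition~2.11. The gap is Step~4, together with the unspecified ``$(\cdots)\,\Delta_M(\cdot,\cdot)$'' in Step~2.

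The Gram–Schmidt factors at $p\mid L$ depend on $f$, so pairing $\lambda_f(m^2)\lambda_f(n)$ with Step~2 does \emph{not} give $\Delta_M(m^2,n)$. Take $q=p$ with $(m,p)=(n,p)=1$. The old pair $g,\,g|_p$ with $g\in\mathcal H_k(1)$ contributes $\omega^{(p)}_g\lambda_g(m^2)\lambda_g(n)/(1-\rho_g(p)^2)$ to $\Delta_p(m^2,n)$. Here $\omega^{(p)}_g=\omega^{(1)}_g/(p+1)$ is the level-$p$ harmonic weight, and
\[
\frac{1}{1-\rho_g(p)^2}=\frac{(p+1)^2}{(p+1)^2-p\lambda_g(p)^2}=\Bigl(1+\frac1p\Bigr)\sum_{j\ge 0}\frac{\lambda_g(p^{2j})}{p^{j}} .
\]
So the old part of $\Delta_p(m^2,n)$ is $\frac1p\sum_{j\ge0}p^{-j}\Delta_1(m^2p^{2j},n)$. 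If instead $p\,\|\,n$, there is an extra factor $1/(p+1)=1/\nu(p)$, since then $\lambda_g(n)-\rho_g(p)\sqrt p\,\lambda_g(n/p)=\lambda_g(n)/(p+1)$. In general, Step~2 yields, for $(m,q)=1$,
\[
\sumh_{f\in\mathcal H_k(q)}\lambda_f(m^2)\lambda_f(n)=\sum_{LM=q}\frac{\mu(L)}{L\,\nu((n,L))}\sum_{\ell\mid L^\infty}\frac{\Delta_M(m^2\ell^2,n)}{\ell}.
\]
After multiplying by $\frac{k-1}{12}\,q\sum_{(m,q)=1}m^{-1}$ from Step~3, the product $m\ell$ runs over all integers coprime to $M$. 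What your method actually proves is therefore
\[
\Delta_q^*(n)=\frac{k-1}{12}\sum_{LM=q}\frac{\mu(L)M}{\nu((n,L))}\sum_{(m,M)=1}\frac{\Delta_M(m^2,n)}{m}.
\]
This is the form in ILS, and it is the one the paper itself uses later, in $\Delta'_q(n)$ of Lemma~\ref{lem:truncLandm} and in $\Sigma^{\pm}_{i,'}$.

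No bookkeeping can reach the printed condition $(m,q)=1$, because with it the identity is false. The two versions differ in the $L=p$ term by $\nu((n,p))^{-1}\sum_{p\mid m}\Delta_1(m^2,n)/m$, which is in general nonzero. For example, take $q=2$, $k=12$, $n=1$, and write $Z_p(1,g)=\sum_{j\ge0}\lambda_g(p^{2j})p^{-j}$. The correct formula gives $1-1=0=|\mathcal H_{12}(2)|$. The $(m,q)=1$ version replaces the second $1$ by $Z_2(1,\Delta)^{-1}=45/32$.

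Likewise, the hypothesis must be $(n,q^2)\mid q$; the printed $(n,q)\mid q^2$ is vacuous. Your $\nu((n,L))$ step needs this, because if $p^2\mid n$ then $\lambda_g(n)-\rho_g(p)\sqrt p\,\lambda_g(n/p)$ is no longer a multiple of $\lambda_g(n)$. You should have flagged both typos rather than asserting that the bookkeeping closes.

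There is also a smaller slip in Step~3. For a newform of square-free level $q$ one has exactly $\omega_f=12\zeta(2)/\bigl((k-1)\,q\,L(1,\mathrm{sym}^2f)\bigr)$, and $L(1,\mathrm{sym}^2 f)/\zeta(2)=L^{(q)}(1,\mathrm{sym}^2 f)/\zeta^{(q)}(2)$. So there are no leftover local factors, and your multiplier $q\,L(1,\mathrm{sym}^2 f)/\zeta^{(q)}(2)$ is off by $\prod_{p\mid q}(1-p^{-2})^{-1}$.
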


The sum over $m$ in $\Delta_q^*(n)$ is not absolutely convergent, so we need to truncate the sums over $m$ and $L$. This truncation is discussed in Lemma 2.12 of \cite{ILS}, and we restate the result here.

\begin{lem} \label{lem:truncLandm} Assume GRH. Let $k$ be our fixed weight and let $q$ be the square-free level.  Suppose that $n$ is a positive integer such that $(n, q^2)|q $. Moreover, assume that the sequence $\mathscr A = \{a_j\}$ satisfies
\es{\label{propertysequenceaj} 
\sum_{(j, nq) = 1} \lambda_f(j) a_j \ll_{k, \epsilon} (nq)^{\epsilon}
}
for every $f \in \mathcal H_k(M)$ with $M | q$. Define
\es{\label{def:Delta'} \Delta'_q(n) = \frac{k-1}{12} \sum_{\substack{LM = q \\ L \leq X}} \frac{\mu(L) M}{\nu((n, L))} \sum_{ \substack{(m, M) = 1 \\ M \leq Y}} \Delta_{M}(m^2, n). }
Then 
$$ \Delta^*_q(n) = \Delta'_{q}(n) + \Delta^{\infty}_{q}(n)$$
and
$$ \sum_{(j, nq) = 1} \Delta^{\infty}_{q}(nj)a_j   \ll_{k, \epsilon}  (n, q)^{-1/2}  q \, (X^{-1}\!+\! Y^{-1}) (nqXY)^{\epsilon}.$$
\end{lem}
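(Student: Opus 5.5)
The plan is to follow the argument of Iwaniec, Luo and Sarnak behind their Lemma~2.12. I read the condition ``$M\le Y$'' in \eqref{def:Delta'} as the truncation $m\le Y$ of the $m$-sum, with the weight $1/m$ from Lemma~\ref{lem:PeterssonILS} kept. With that reading, $\Delta^\infty_q(n)$ splits into two pieces:
\[
\Delta^\infty_q(n)=\Delta^{(1)}_q(n)+\Delta^{(2)}_q(n).
\]
Here $\Delta^{(1)}_q(n)$ collects the terms of Lemma~\ref{lem:PeterssonILS} with $L>X$, and $\Delta^{(2)}_q(n)$ collects those with $L\le X$ and $m>Y$. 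The key input is the spectral interpretation of the inner $m$-sum. Decompose $S_k(M)$ into newforms $g\in\mathcal H_k(M')$ with $M'\mid M$ and their oldform lifts, and use Hecke multiplicativity together with $\lambda_g(m^2)$. For $\mathrm{Re}\, s>1$ this should express $\sum_{(m,q)=1}\Delta_M(m^2,n)m^{-s}$ as a finite combination of terms of the form
\[
\frac{\lambda_g(n)}{\|g\|^2}\,\frac{L^{(q)}(s,\mathrm{sym}^2 g)}{\zeta^{(q)}(2s)}\times(\text{local factors at } p\mid M).
\]
Here $\|g\|^2\asymp_k M\,L(1,\mathrm{sym}^2 g)$ up to $M^{\pm\epsilon}$ factors from the local corrections. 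At $s=1$ the factor $L(1,\mathrm{sym}^2g)$ cancels, which is exactly how Lemma~\ref{lem:PeterssonILS} arises from Möbius inversion over old spaces.

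For the piece $L>X$, I would not truncate in $m$ at all. Instead I would use the identity above at $s=1$: each $M$-block equals $\frac{12}{(k-1)M}$ times a sum over newforms $g$ of level $M'\mid M$ of $\lambda_g(nj)$, with coefficients $\ll (nq)^\epsilon$. Since $(j,nq)=1$, we have $\lambda_g(nj)=\lambda_g(n)\lambda_g(j)$. Summing against $a_j$ and applying hypothesis \eqref{propertysequenceaj} to each $g$ bounds each inner $g$-term by $(nq)^\epsilon$. The number of $g$ is $\ll M^{1+\epsilon}$, which cancels the weight $1/M$. The prefactor $M/\nu((n,L))$ and the bound $|\lambda_g(n)|$ at primes dividing $(n,q)$ (where $\lambda_g(p)^2\le 1/p$) produce the saving $(n,q)^{-1/2}$. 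Summing over $L>X$ with $M=q/L<q/X$ then gives $\ll (n,q)^{-1/2}q X^{-1}(nq)^\epsilon$.

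For the piece $L\le X$, $m>Y$, I would write the tail $\sum_{m>Y}$ by a smooth Mellin/Perron formula as a contour integral of the Dirichlet series above, weighted by $Y^{s-1}/(s-1)$ or a smoothed kernel. I would then shift the contour to the left of $\mathrm{Re}\, s=1$, using GRH for $L(s,\mathrm{sym}^2 g)$ to bound it by $(Mk|s|)^\epsilon$ and $1/\zeta(2s)$ there. The residue at $s=1$ is exactly the untruncated term already accounted for. The remaining integral gains a power of $Y$, and summing over $g$, $j$ and $L\le X$ as in the previous step gives the claimed $q\,Y^{-1}(nqXY)^\epsilon$. The main obstacle is this step. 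Shifting only to $\mathrm{Re}\, s=1/2+\epsilon$ naively saves $Y^{-1/2}$, so obtaining the full $Y^{-1}$ stated requires care. I would first try to exploit that the series is really in $m^2$, so that the relevant variable is $2s$, or use the extra average over $j$ with the $a_j$. If neither works, I would accept a weaker power of $Y$; this is harmless for the application since $Y$ will be a free power of $Q$.
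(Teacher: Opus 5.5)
Your outline is the Iwaniec--Luo--Sarnak argument for their Lemma~2.12, and that is all the paper relies on here: it restates the lemma and refers to \cite{ILS} without proof. You split $\Delta^\infty_q$ into the $L>X$ terms and the $L\le X$, $m>Y$ terms. You read the $m$-sum spectrally so that $L(1,\mathrm{sym}^2 g)$ cancels against $\|g\|^2$. You handle $L>X$ with no $m$-truncation, using $\lambda_g(nj)=\lambda_g(n)\lambda_g(j)$, the hypothesis on $a_j$, and the count $\ll M^{1+\epsilon}$ of forms. You handle the $m$-tail by GRH for $L(s,\mathrm{sym}^2 g)$. Your reading of the misprinted ``$M\le Y$'' as $m\le Y$, with the weight $1/m$ restored, is also right.

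The obstacle you flag is real, but it is a defect of the statement, not of your argument. The method gives $(n,q)^{-1/2}q\,(X^{-1}+Y^{-1/2})(nqXY)^{\epsilon}$, which is, to my recollection, the bound stated in \cite{ILS}; the $Y^{-1}$ in the restatement should be read as $Y^{-1/2}$. Neither of your proposed upgrades can work.
\begin{itemize}
\item Up to local factors the series is $L^{(q)}(s,\mathrm{sym}^2 g)/\zeta^{(q)}(2s)$. Only the denominator lives in the variable $2s$. Moving left of $\mathrm{Re}\,s=\tfrac12$ costs a positive power of the conductor (up to $M^2$) through the functional equation, which is fatal when $M\asymp Q$ and $Y=Q^{\epsilon}$.
\item Even ignoring the conductor, the zeros of $\zeta(2s)$ on $\mathrm{Re}\,s=\tfrac14$ stop the contour shift at $Y^{-3/4}$, never $Y^{-1}$.
\item The $j$-average cannot help either, because the hypothesis on $a_j$ is only a bound for each form separately. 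The sequences $(\lambda_g(j))_{(j,nq)=1}$ for distinct newforms of level dividing $q$ are linearly independent. So one can choose $a_j$ with $|\sum_j\lambda_g(j)a_j|\le 1$ for every $g$ and with every term of the resulting sum over $g$ nonnegative. Hence nothing beyond the single-form tail bound $Y^{-1/2+\epsilon}$ can be extracted.
\end{itemize}
So stop at $Y^{-1/2}$. As you note, this is harmless downstream: with $X=Y=Q^{\epsilon}$ the bound for $\Sigma_{i,\infty}^{\pm}$ is still a negative power of $Q$.

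Two details need to be made explicit.
\begin{itemize}
\item In the $m>Y$ piece, the factor $L(1,\mathrm{sym}^2 g)$ inside $\|g\|^2$ is no longer cancelled. You therefore need the lower bound $L(1,\mathrm{sym}^2 g)\gg M^{-\epsilon}$, which follows from GRH.
\item Take a prime $p\mid (n,M)$ with $p\nmid M'$. There the required factor $p^{-1/2}$ does not come from $\lambda_g(p)^2=1/p$, which holds only for $p\mid M'$. It comes from the oldform structure: only $\ll (M/p)^{1+\epsilon}$ newforms $g$ of level prime to $p$ occur, each with harmonic weight $\approx 1/M$ and bounded oldform coefficients. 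This is what the explicit basis of old forms in \cite{ILS} gives.
\end{itemize}
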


Next are asymptotic formulas for $\Delta_q^*(n)$ and the number of newforms which we take from Proposition 2.13 and Corollary 2.14 in \cite{ILS}.

\begin{lem} \label{lem:sizeofHkq}Suppose $q$ be square-free and $(n, q^2) | q.$ Then

 \es{\label{lem:asymDeltaq*} \Delta_q^*(n) = \delta(n = \square) \frac{k - 1}{12} \frac{\phi(q)}{\sqrt n} + O_k\left( (n, q)^{-1/2} n^{1/6}q^{2/3} \right)}
where $\delta(n = \square)$ is $1$ only if $n$ is a perfect square and vanishes otherwise.  Note that $n$ is a perfect square and $(n, q^2)|q$ with $q$ square-free implies that $(n, q) = 1$.  Further, we have

$$ |\mathcal H_k(q)| = \frac{k-1}{12} \phi(q) + O_k(q^{2/3}),$$
and 
$$ |\mathcal H_k^{\pm}(q)| = \frac{k-1}{24} \phi(q) + O_k(q^{5/6}).$$
    
\end{lem}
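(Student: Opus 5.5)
The plan is to read off the main term from the diagonal of Lemma~\ref{lem:PeterssonILS} and to bound everything else by Lemma~\ref{lem:petertruncate} together with a tail estimate in $m$. I would then deduce both counting statements by specializing $n$: first $n=1$, then $n=q$ using \eqref{def:epsf}.

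\emph{Main term.} In Lemma~\ref{lem:PeterssonILS}, write $\Delta_M(m^2,n)=\delta(m^2,n)+(\text{Kloosterman part})$. The diagonal $\delta(m^2,n)$ contributes only when $n=m^2$ is a perfect square with $(m,q)=1$. In that case $(n,L)=1$, so $\nu((n,L))=1$, and the contribution is
\[
\frac{k-1}{12}\cdot\frac{1}{\sqrt n}\sum_{LM=q}\mu(L)M=\frac{k-1}{12}\,\frac{\phi(q)}{\sqrt n}.
\]
If $n$ is a square but $(n,q)\neq1$, then the condition $(n,q^2)\mid q$ fails for square-free $q$, so this case does not occur. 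This gives exactly the main term in \eqref{lem:asymDeltaq*}.

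\emph{Error terms.} The $m$-sum is not absolutely convergent, so I would split it at a parameter $Y$.

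For $m\le Y$, I would bound the off-diagonal part of $\Delta_M(m^2,n)$ by Lemma~\ref{lem:petertruncate}. This gives roughly
\[
\frac{(m^2,n,M)}{M\,(n,M)^{1/2}}\bigl(m^2n\bigr)^{1/4+\epsilon}\bigl(\sqrt{m^2n}+M\bigr)^{-1/2}.
\]
Multiplying by $M/m$ and summing over $m\le Y$ and $M\mid q$ yields something like $(n,q)^{-1/2}n^{1/4}Y^{1/2}q^{\epsilon}$.

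For $m>Y$, I would not expand $\Delta_M$. Instead I would return to the spectral side and write $\sum_{m>Y,(m,q)=1}\Delta_M(m^2,n)/m$ as a harmonic average over $f\in B_k(M)$ of $\lambda_f(n)\sum_{m>Y}\lambda_f(m^2)/m$. This tail is a tail of the Dirichlet series of $L(s,\operatorname{sym}^2 f)/\zeta(2s)$ at $s=1$. Shifting contours and using the convexity bound (conductor $\asymp M^2$) gives a bound of roughly $M^{1/2+\epsilon}Y^{-1/2}$ per form; no GRH is needed because only an upper bound is required. Summing over forms with $\sum^h 1\ll1$, using Deligne's bound $|\lambda_f(n)|\le\tau(n)$, and then over $LM=q$ with weight $M$ gives roughly $q^{3/2+\epsilon}Y^{-1/2}$.

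Where $L$ is large, $M$ is small, and the trivial estimate $M\cdot\dim S_k(M)/\ldots$ should suffice. Balancing $Y$ against the $m\le Y$ contribution should produce the stated $(n,q)^{-1/2}n^{1/6}q^{2/3}$.

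\emph{Main obstacle.} The delicate step is this optimization: arranging the splitting in $L$ and $m$ so that the exponents come out as $n^{1/6}q^{2/3}$. If the convexity tail is too weak, I would instead use the Hecke relation $\lambda_f(m^2)=\sum_{d\mid m}\ldots$ to trade the $m$-sum for a short sum plus a smoothed one.

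\emph{Counting newforms.} Taking $n=1$ gives $|\mathcal H_k(q)|=\frac{k-1}{12}\phi(q)+O(q^{2/3})$. For the sign-restricted count, use
\[
|\mathcal H_k^{\pm}(q)|=\tfrac12|\mathcal H_k(q)|\pm\tfrac12\sum_{f}\varepsilon_{f,q}.
\]
By \eqref{def:epsf}, $\sum_f\varepsilon_{f,q}=i^k\mu(q)\sqrt q\,\Delta_q^*(q)$. The choice $n=q$ satisfies $(n,q^2)\mid q$, and $q$ is not a square (for $q>1$), so there is no main term. The error term is then $\sqrt q\cdot q^{-1/2}q^{1/6}q^{2/3}=q^{5/6}$, as claimed.
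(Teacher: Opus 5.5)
The paper gives no proof of this lemma; it quotes Proposition~2.13 and Corollary~2.14 of \cite{ILS}. Your skeleton is the standard way to obtain those results: the diagonal of Lemma~\ref{lem:PeterssonILS} for the main term, Lemma~\ref{lem:petertruncate} for $m\le Y$, the symmetric-square $L$-function for $m>Y$, and then $n=1$ and $n=q$ via \eqref{def:epsf}. Your main-term computation is correct, and so is the deduction of both counts from \eqref{lem:asymDeltaq*}. The error analysis, however, does not produce the stated exponents, for two independent reasons.

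\textbf{The tail estimate is too weak.} Convexity on $\tRe s=\tfrac12$ gives a per-form bound $M^{1/2+\epsilon}Y^{-1/2}$, so the tail totals $q^{3/2+\epsilon}Y^{-1/2}$. Balancing this against your head $n^{1/4}Y^{1/2}$ forces $Y=q^{3/2}n^{-1/4}$ and gives $n^{1/8}q^{3/4+\epsilon}$. That exceeds $n^{1/6}q^{2/3}$ for every $n<q^2$, so no choice of $Y$ closes the argument.

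What works is to smooth the cutoff at $Y$ and shift all the way to $\tRe s=0$ for $L(s,\mathrm{sym}^2 f)$. The functional equation and the trivial bound on $\tRe s=1$ give $L(it,\mathrm{sym}^2 f)\ll (M(1+|t|)^{3/2})^{1+\epsilon}$. Each form then contributes $\ll M^{1+\epsilon}Y^{-1}$, the tail totals $q^{2+\epsilon}Y^{-1}$, and $Y=q^{4/3}n^{-1/6}$ yields $n^{1/6}q^{2/3}$. Shifting to $\tRe s=1-\sigma$ gives $q$-exponent $(1+\sigma)/(1+2\sigma)$ for $\sigma\le1$ and $2\sigma/(1+2\sigma)$ for $\sigma\ge 1$, so $\sigma=1$ is optimal; this is where the $2/3$ comes from.

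\textbf{The tail loses the factor $(n,q)^{-1/2}$.} Bounding $\lambda_f(n)$ in the tail by Deligne's $\tau(n)$ discards this factor, and the application $n=q$ depends entirely on it. Without it, even the corrected tail $q^{2}Y^{-1}$ against the head $q^{-1/4}Y^{1/2}$ only gives $\Delta_q^*(q)\ll q^{1/2+\epsilon}$. That means $\sum_f\varepsilon_{f,q}\ll q^{1+\epsilon}$, which is trivial; with your convexity tail it is even $q^{9/8}$.

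Three facts recover the factor:
\begin{itemize}
\item A newform of level $M$ has $|\lambda_f(p)|=p^{-1/2}$ for $p\mid M$ (this is \eqref{def:epsf}), so $|\lambda_f(n)|\le\tau(n)(n,M)^{-1/2}$.
\item Oldforms induced from level $M'\mid M$ get this saving only at primes of $M'$. Their total harmonic mass is roughly $M^{\epsilon}M'/M$, which compensates because $(n,M/M')^{1/2}\le M/M'$.
\item The factor $\nu((n,L))^{-1}$ covers the primes of $(n,L)$.
\end{itemize}
Together these make the tail $\ll(n,q)^{-1/2}q^{2+\epsilon}Y^{-1}$. This also means the spectral rewriting of the tail must be done in a basis adapted to the old/new decomposition: for an arbitrary orthogonal basis, $\sum_m\lambda_f(m^2)m^{-s}$ is not $L(s,\mathrm{sym}^2 f)/\zeta(2s)$.

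At $n=q$, the choice $Y=q^{7/6}$ then gives $\Delta_q^*(q)\ll q^{1/3+\epsilon}$, and hence the $q^{5/6}$ error for $|\mathcal H_k^\pm(q)|$. Routing the head through Lemma~\ref{lem:petertruncate} still leaves a $q^{\epsilon}$ loss absent from the statement; this is harmless for how the lemma is used later in the paper.
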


Now we derive an asymptotic formula for $N^{\pm}(Q)$ defined in \eqref{def:NpmQ}.

\begin{lem} \label{lem:asympforNQ} Let $\Psi(x)$ be a smooth function compactly supported in $(a, b),$ where $a$ and $b$ are fixed positive constants with $a < b $. Then we have 
\es{ \label{eqn:NpmQintermofNQ} N^{\pm}(Q) = \frac{1}{2}\sumb_q \Psi\left( \frac qQ\right) \sum_{f \in \mathcal H_k(q)} 1 + O_k\left( Q^{11/6}\right)}
and 
\es{ \label{eqn:asympNpmQ} N^{\pm} (Q) = \frac{k-1}{24}Q^2\widetilde \Psi(2) T(2) + O_k\left( Q^{11/6}\right),}
where
\es{\label{def:Ts} T(s) = \prod_{p} \left( 1 - \frac{1}{p^{s}} - \frac{1}{p^{2s - 2}}  + \frac{ 1}{p^{2s - 1}} \right).}
\end{lem}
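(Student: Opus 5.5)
We prove \eqref{eqn:NpmQintermofNQ} first and then \eqref{eqn:asympNpmQ}, using Lemma~\ref{lem:sizeofHkq} and a Mellin/Dirichlet-series computation over square-free $q$.

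For \eqref{eqn:NpmQintermofNQ}, Lemma~\ref{lem:sizeofHkq} gives, for square-free $q$,
\[
|\mathcal H_k^{\pm}(q)| = \tfrac12|\mathcal H_k(q)| + O_k(q^{5/6}).
\]
Indeed, $|\mathcal H_k^\pm(q)| = \frac{k-1}{24}\phi(q)+O(q^{5/6})$, while $\frac12|\mathcal H_k(q)| = \frac{k-1}{24}\phi(q)+O(q^{2/3})$. Multiply by $\Psi(q/Q)$ and sum over square-free $q \asymp Q$ in $[aQ,bQ]$. The error terms contribute $O_k(Q\cdot Q^{5/6}) = O_k(Q^{11/6})$, which is \eqref{eqn:NpmQintermofNQ}.

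For \eqref{eqn:asympNpmQ}, insert $|\mathcal H_k(q)| = \frac{k-1}{12}\phi(q)+O_k(q^{2/3})$ into \eqref{eqn:NpmQintermofNQ}. The error becomes $O(Q^{5/3})$, which is absorbed, leaving
\[
N^\pm(Q) = \frac{k-1}{24}\sumb_q \Psi\bfrac qQ \phi(q) + O_k(Q^{11/6}).
\]
It remains to show
\[
\sumb_q \Psi(q/Q)\phi(q) = Q^2\widetilde\Psi(2)T(2) + O(Q^{3/2+\epsilon}).
\]
By Mellin inversion,
\[
\sumb_q \Psi(q/Q)\phi(q) = \frac{1}{2\pi i}\int_{(3)} \widetilde\Psi(s)Q^s D(s)\,ds, \qquad D(s)=\sum_q \mu^2(q)\phi(q)q^{-s}=\prod_p\Big(1+\frac{p-1}{p^s}\Big).
\]
Factor $D(s)=\zeta(s-1)\,G(s)$, where
\[
G(s)=\prod_p (1-p^{1-s})\big(1+p^{1-s}-p^{-s}\big)=\prod_p\big(1-p^{-s}-p^{2-2s}+p^{1-2s}\big).
\]
Each local factor is $1+O(p^{-\sigma}+p^{2-2\sigma})$, so $G$ converges absolutely for $\mathrm{Re}(s)>3/2$. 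We have $G(2)=T(2)$, and the residue of $\zeta(s-1)$ at $s=2$ is $1$. Shift the contour to $\mathrm{Re}(s)=3/2+\epsilon$; the pole at $s=2$ gives the main term $\widetilde\Psi(2)T(2)Q^2$.

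On the new line, $\widetilde\Psi(s)$ decays faster than any power of $|s|$ because $\Psi$ is smooth with compact support in $(0,\infty)$. Also $\zeta(s-1)$ has at most polynomial growth at real part $1/2+\epsilon$, and $G$ is bounded there. So the shifted integral is $O(Q^{3/2+\epsilon})$, and the total error is $O_k(Q^{11/6})$.

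There is no serious obstacle. The one point needing care is the factorization of the Euler product, specifically checking that $G(s)$ converges absolutely to the left of $2$ so the contour shift is justified. Equivalently, one could avoid Mellin transforms by writing $\mu^2(q)\phi(q)/q$ as a convolution and using partial summation, with the same outcome.
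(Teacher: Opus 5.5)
Your proof is correct and follows the paper's route. You use Lemma~\ref{lem:sizeofHkq} for both reductions, then Mellin inversion with $\sum_q \mu^2(q)\phi(q)q^{-s}=\zeta(s-1)T(s)$ and the residue at $s=2$. The paper reaches this Dirichlet series via $\mu^2(q)=\sum_{a^2\mid q}\mu(a)$ rather than writing the Euler product directly, which is only a cosmetic difference.

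Your care about convergence is warranted. Because of the $p^{2-2s}$ term, $T(s)$ converges absolutely only for $\mathrm{Re}(s)>3/2$, not for $\mathrm{Re}(s)>1$ as the paper asserts when it shifts to $\mathrm{Re}(s)=1+\epsilon$. So your shift to $\mathrm{Re}(s)=3/2+\epsilon$ is the justified one, and it still gives an error well within $O_k(Q^{11/6})$.
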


\begin{proof}
From Lemma \ref{lem:sizeofHkq}, we immediately obtain \eqref{eqn:NpmQintermofNQ}. Further, by Lemma \ref{lem:sizeofHkq}
 
$$ N^{\pm}(Q) =  \frac{k-1}{24} \sumb_q \Psi\left( \frac qQ\right) \phi(q) + O_k(Q^{11/6}). $$
Since $\displaystyle \sum_{a^2 | q} \mu(a)$ is 1 if $q$ is square-free and 0 otherwise, we now rewrite the sum over $q$ as\footnote{Here, and throughout the paper, we write $\int_{(c)}$ to mean $\int_{c-i\infty}^{c+i\infty}$ for $c\in\mathbb{R}$.} 
\est{ \sumb_q \Psi\left( \frac qQ\right) \phi(q)  &= \sum_q \sum_{a^2 | q }  \mu(a) \Psi\left( \frac qQ\right) \phi(q) \\
&= \sum_a \mu(a) \sum_q \Psi\left( \frac {qa^2}Q\right) \phi(qa^2) \\
&= \frac{1}{2\pi i} \int_{(3)} Q^{s}\widetilde{\Psi}(s)  \sum_a \frac{\mu(a)}{a^{2s}} \sum_q \frac{ \phi(qa^2)}{q^s} \> ds\\
&= \frac{1}{2\pi i} \int_{(3)} Q^{s}\widetilde{\Psi}(s)  \zeta(s - 1) T(s) \> ds,}
where $T(s)$ is defined in \eqref{def:Ts}. Note that $T(s)$ is absolutely convergent when $\tRe(s) > 1.$ Moving the contour integration to $\tRe(s) = 1 + \epsilon$ and picking up a residue at $s = 2$, we obtain the lemma.  
\end{proof}

The next lemma collects some well known properties and formulas for the $J$-Bessel function.
{\lem\label{jbessel} Let $J_{k-1}$ be the $J$-Bessel function of order $k-1$. We have
\es{\label{jbessel:bigx} J_{k-1}(2\pi x) =\frac{1}{2\pi \sqrt{x}}\left(W_k(2\pi x) \e{x-\frac{k}{4}+\frac{1}{8}} + \overline{W}_k(2\pi x) \e{-x+\frac{k}{4}-\frac{1}{8} }\right),} where 
$$W_k(x) = \frac{1}{\Gamma(k-1)} \int_0^\infty e^{-u} u^{k - \tfrac 32} \left( 1 + \frac{iu}{4\pi x}\right)^{k - \tfrac 32} \> du$$ 
\textup{(}see \cite[p.~206]{Watt}\textup{)}. Note that $W_k^{(j)}(x)  \ll_{j,k} x^{-j}.$
Moreover, 
\begin{equation}  \label{jbessel:smallx} J_{k-1}(2x) =\sum_{\ell = 0}^{\infty} (-1)^{\ell} \frac{x^{2\ell+k-1}}{\ell! (\ell+k-1)!}
\end{equation}
and $$J_{k-1}(x)\ll \textup{min} \{x^{-1/2}, x^{k-1}\}.$$
Finally,
the Mellin integration representation of $J_{k - 1}$ is 
\begin{equation} \label{eqn:ILMellinforJBessel}
J_{k-1}(x) = \frac{1}{2\pi i} \int_{(\sigma)} 2^{-w - 1} \frac{\Gamma \left( \frac{k - w - 1}{2}\right)}{\Gamma \left( \frac{k +  w + 1}{2} \right)} x^{w} \> dw
\end{equation}
where $0 < \sigma < k - 1$.
}

The proof of the first three claims of Lemma~\ref{jbessel} can be found in \cite[p.~206]{Watt}, and the statement of the last claim is modified from Equation 16 of Table 17.43 in \cite{GR}.  
\vskip 0.1in

\subsection{An explicit formula and some consequences of GRH}

We state the explicit formula, which relates a sum over its prime power coefficient with zeros of $L(s, f)$, and further state some bounds for sums over primes assuming GRH. 

\begin{lem}\label{lem:Explicit} Let $\Phi$ be an even Schwartz function whose Fourier transform has compact support. Suppose the level $q \asymp Q$.  We have
\begin{align*}
    \sum_{\gamma_f} \Phi \left(\frac{\gamma_f}{2\pi} \log Q\right)  &= -\frac{1}{\log Q}\sum_{n=1}^{\infty}\frac{\Lambda(n)[c_f(n) + c_{\bar f}(n)]}{\sqrt{n}} \widehat\Phi\left(\frac{\log n}{\log Q}\right)
    \\
    &\qquad \quad + \int_{-\infty}^{\infty} \Phi(x) \> dx +O_k\left(\frac{1}{\log Q} \right),
\end{align*}
where $c_f(n)$ is defined in \eqref{def:cfn}.
\end{lem}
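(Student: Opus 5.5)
This is the standard Weil explicit formula applied to $\Lambda(s,f)$, and I would follow the classical route used in \cite{ILS}.

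\emph{Contour integral.} I start from the contour integral
\[
\frac{1}{2\pi i}\int_{(2)} \frac{\Lambda'}{\Lambda}\left(\tfrac12+s,f\right) h(s)\,ds - \frac{1}{2\pi i}\int_{(-2)} \frac{\Lambda'}{\Lambda}\left(\tfrac12+s,f\right) h(s)\,ds,
\]
where $h(s)=\Phi\bigl(\frac{s\log Q}{2\pi i}\bigr)$. Because $\widehat\Phi$ has compact support, $\Phi$ extends to an entire function of rapid decay in vertical strips, so this is legitimate. Shifting contours picks up the residues at the zeros $\rho_f=\frac12+i\gamma_f$ of $\Lambda(s,f)$, which gives exactly $\sum_{\gamma_f}\Phi\bigl(\frac{\gamma_f}{2\pi}\log Q\bigr)$. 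I make no use of GRH here; I only need $\Phi$ evaluated at complex arguments, and if GRH is assumed the $\gamma_f$ are real.

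\emph{Reflection.} Using the functional equation \eqref{eqn:fncL}, the integral on $\tRe s=-2$ becomes one on $\tRe s=2$ for the dual form $\bar f$. Since $\lambda_f$ is real for trivial nebentypus, $\bar f=f$, which is why the formula is written with $c_f+c_{\bar f}$. On $\tRe s = 2$ I decompose
\[
\frac{\Lambda'}{\Lambda}\left(\tfrac12+s,f\right)=\tfrac12\log\frac{q}{4\pi^2}+\frac{\Gamma'}{\Gamma}\left(s+\tfrac k2\right)+\frac{L'}{L}\left(\tfrac12+s,f\right).
\]

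\emph{The three pieces.} I treat each term in turn.

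\begin{itemize}
\item The conductor term contributes $\frac{\log q}{2\pi}\int \Phi\bigl(\frac{t\log Q}{2\pi}\bigr)\,dt=\frac{\log q}{\log Q}\int\Phi(x)\,dx$ after shifting back to the line $\tRe s=0$. Since $q\asymp Q$, this equals $\int\Phi + O(1/\log Q)$. The $\log 4\pi^2$ part likewise gives $O(1/\log Q)$.
\item The gamma term contributes $\frac{1}{2\pi}\int \Phi\bigl(\frac{t\log Q}{2\pi}\bigr)\,2\tRe\frac{\Gamma'}{\Gamma}\bigl(\frac k2+it\bigr)\,dt$. Since $\Gamma'/\Gamma(k/2+it)\ll_k \log(2+|t|)$ and $\Phi$ decays rapidly, this is $O_k(1/\log Q)$.
\item The term $L'/L$ is expanded as the Dirichlet series $-\sum_n \Lambda(n)c_f(n)n^{-1/2-s}$, which converges absolutely on $\tRe s=2$. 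I then move the contour to $\tRe s=0$, which is fine termwise. Integrating each term against $h$ gives $\frac{1}{\log Q}\widehat\Phi\bigl(\frac{\log n}{\log Q}\bigr)$. This produces the prime sum with the stated normalization, with $c_f$ from one side and $c_{\bar f}$ from the reflected side.
\end{itemize}

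The coefficient identity is that the Dirichlet coefficients of $-L'/L$ are $\Lambda(n)c_f(n)$ with $c_f(p^\ell)=\alpha_f(p)^\ell+\beta_f(p)^\ell$. This holds including at $p\mid q$, where one of $\alpha_f(p)$, $\beta_f(p)$ vanishes.

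\emph{Main obstacle.} The only delicate step is justifying the contour shifts: I need $\frac{\Lambda'}{\Lambda}\bigl(\frac12+s,f\bigr)h(s)$ to be integrable on horizontal segments. This follows from the standard bound $\frac{L'}{L}(\sigma+iT)\ll\log(q(|T|+2))$ on suitably chosen heights $T$ avoiding zeros, combined with the super-polynomial decay of $h$ in bounded vertical strips. I also need to track uniformity in $k$ in the $O_k$ term. Both points are routine (cf. \cite{ILS}, Section 4).
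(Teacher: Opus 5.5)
Your proposal is correct and follows the paper's route. The paper cites the standard explicit formula (Lemma 2.5 of \cite{BCL}, as in \cite{ILS}), normalized by $\log q$, and notes that $\frac{\log q}{\log Q}=1+O\bigl(\frac{1}{\log Q}\bigr)$ when $q\asymp Q$; this is exactly how you handle your conductor term $\frac{\log q}{\log Q}\int\Phi$. One small correction: the usual bound for $\frac{L'}{L}$ on well-chosen horizontal segments is $\ll\log^2(q(|T|+2))$, not $\log(q(|T|+2))$, but this is harmless given the super-polynomial decay of $h$.
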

Lemma 2.5 in \cite{BCL} states a similar result with $\log q$ in place of our $\log Q$.  The proof is very similar, with the observation that $\frac{\log q}{\log Q} = 1 + O\bfrac{1}{\log Q}.$

\begin{lem} \label{lem:CLee3.5} Assume GRH for $L(s,\chi)$ with $\chi$ \textup{mod }$q$ and for $L(s, f)$, where $f$ is a primitive holomorphic Hecke eigenform or a primitive Maass Hecke eigenform of level $q$ and weight $k$. Let $X>0$ be a real number, and let $\Psi$ be a smooth function that is compactly supported on $[0,X]$. Suppose that, for each positive integer $m$, there exists a constant $A_m$ depending only on $m$ such that
$$
|\Psi^{(m)}(x)| \leq \frac{A_m}{\min\{\log(X+3), X/x\}x^{m}}
$$
for all $x>0$. Write $z = \frac 12 +it$ with $t$ real, and let $N$ be a positive integer. If $\chi$ is a non-principal character, then
$$\sum_{(p, N) = 1}\frac{\chi(p)\log(p)\Psi(p)}{p^z}\ll A_3 \log^{1 + \epsilon}( X + 2) \log(q+|t| ) + \log N \max_{0 \leq x \leq X} |\Psi(x)|,$$
with absolute implied constant. Similarly, 
$$\sum_{(p, N) = 1}\frac{\lambda_f(p)\log(p)\Psi(p)}{p^z}\ll A_3 \log^{1 + \epsilon}( X + 2) \log (q+k+|t| ) + \log N \max_{0 \leq x \leq X} |\Psi(x)|,$$
with absolute implied constant.
\end{lem}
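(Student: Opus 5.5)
The plan is to pass from the sum over primes to a sum over all prime powers weighted by $\Lambda(n)$, and to express that sum as a contour integral of $-L'/L$ against the Mellin transform of $\Psi$. I treat the character case; the case of $f$ is identical, with $L(s,\chi)$ replaced by $L(s,f)$ and $\chi(p^\ell)$ replaced by $\alpha_f(p)^\ell+\beta_f(p)^\ell$. By Mellin inversion,
\[
\sum_{n}\frac{\Lambda(n)\chi(n)\Psi(n)}{n^{z}}=\frac{1}{2\pi i}\int_{(c)}\widetilde\Psi(w)\Big(-\frac{L'}{L}\Big)(z+w,\chi)\,dw ,\qquad c>1/2 .
\]
Because $\chi$ is non-principal (respectively $f$ is cuspidal), $L'/L$ has no pole in $\textup{Re}(z+w)>1/2$ under GRH. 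I therefore move the line to $\textup{Re}(w)=\delta:=1/\log(X+3)$, picking up no residues.

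The next step is to bound $\widetilde\Psi(w)$ on that line using the hypothesis on $\Psi^{(m)}$. For $|w|\le 1$ I use the case $m=0$ directly. Since $|\Psi(x)|\le A_0/\min\{\log(X+3),X/x\}$ and $\Psi$ is supported in $[0,X]$, we get
\[
|\widetilde\Psi(w)|\le \frac{A_0}{\log (X+3)}\int_0^X x^{\delta-1}\,dx+\ldots\ll A_0 ,
\]
since $X^\delta/\delta\ll\log(X+3)$. For $|w|\ge1$ I integrate by parts three times, giving
\[
\widetilde\Psi(w)=\frac{-1}{w(w+1)(w+2)}\int_0^X\Psi'''(x)\,x^{w+2}\,dx .
\]
The same computation then gives $|\widetilde\Psi(w)|\ll A_3|w|^{-3}$. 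One should check that the $A_m$ with $m<3$ are absorbed into $A_3$, or else carry them along.

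The main input is a GRH bound for $L'/L$ slightly to the right of the critical line. Write $C=q+k+|t|+|w|$ for the analytic conductor. Under GRH, for $s=\tfrac12+\delta+i\tau$, the Hadamard expansion
\[
-\frac{L'}{L}(s)=\sum_\rho \frac{1}{s-\rho}+O(\log C)
\]
together with a zero count in windows of length $1$ (about $\log C$ zeros each) gives $L'/L(s)\ll \log C\cdot\log(1/\delta)$. Alternatively, Selberg's bound $\ll(\log C)^{2-2\sigma}$ with a $1/(\sigma-1/2)$-type loss could be used after suitable averaging. This is where I expect the real work. One must show that the loss in the transition from $\delta$ to the line is only $(\log X)^{\epsilon}$, to match $\log^{1+\epsilon}(X+2)\log(q+|t|)$. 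I would first try the crude zero-sum bound, splitting zeros into $|\gamma-\tau|\le\delta$ and dyadic shells beyond. Combining this with the bound for $\widetilde\Psi$ and integrating over $w$ (the $|w|^{-3}$ decay makes $\log(q+|t|+|w|)$ integrable) yields the main term $A_3\log^{1+\epsilon}(X+2)\log(q+|t|)$.

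Finally I remove the unwanted terms. The higher prime powers $p^\ell$ with $\ell\ge3$ contribute $O(\max|\Psi|)$ trivially. The squares $p^2\le X$ contribute
\[
\ll\max|\Psi|\sum_{p\le\sqrt X}\frac{\log p}{p}\ll \log X\cdot\frac{A_0}{\log X}=O(A_0),
\]
using $|\chi(p^2)|\le1$ and $|\alpha_f(p)^2+\beta_f(p)^2|\le2$ under Ramanujan, or GRH otherwise. The primes $p\mid N$ contribute at most $\sum_{p\mid N}\log p\,\max|\Psi|\le\log N\max_{0\le x\le X}|\Psi(x)|$, which is the second term in the statement. All of these are dominated by the claimed bound, which completes the proof for both $\chi$ and $f$.
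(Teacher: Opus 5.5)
The paper quotes this lemma without proof, so your argument has to stand on its own. Its skeleton is the right one: Mellin inversion against $-L'/L$, a shift to $\textup{Re}\,w=\delta=1/\log(X+3)$ with no residues, then stripping off prime powers and $p\mid N$. But the two quantitative inputs you feed into it are wrong, and they are exactly what determines the shape of the bound.

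\textbf{The bound on $\widetilde\Psi$ near $w=0$.} There is no $m=0$ hypothesis. The bound $|\Psi(x)|\le A_0/\min\{\log(X+3),X/x\}$ that you use for $|w|\le1$, and again for the prime squares, is false for admissible $\Psi$. Take $\Psi(x)=\Upsilon(c\log x/\log X)$ from the Remark, or $\widehat\Phi(\log x/\log Q)$ as in the paper's own application. These satisfy the hypotheses for every $m\ge1$, yet for suitable $\Upsilon$ one has $\Psi\equiv1$ on $[1,X^{1/2}]$. Hence $\widetilde\Psi(\delta)\ge\int_1^{X^{1/2}}x^{\delta-1}\,dx\gg\log X$, not $O(1)$. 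The correct bound comes from your own triple integration by parts, which is valid on the whole line. Since $\int_0^X x^{\delta-1}\min\{\log(X+3),X/x\}^{-1}\,dx\ll1$ and $|w+1|,|w+2|\ge1$, you get
\[
|\widetilde\Psi(w)|\ll\frac{A_3}{|w|(1+|w|)^2}\qquad(\textup{Re}\,w=\delta),
\]
and this genuinely reaches size $\log X$ near $w=\delta$. For the prime squares, integrating $\Psi'$ down from $X$ only gives $\max_{x\ge1}|\Psi(x)|\ll A_3$. So the squares contribute $\ll A_3\log X$, which is still acceptable but is not $O(A_0)$.

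\textbf{The bound on $L'/L$.} The bound $\frac{L'}{L}(\frac12+\delta+i\tau)\ll\log C\,\log(1/\delta)$ does not follow from dyadic shells. A shell of width $2^j\delta<1$ would need $\ll 2^j\delta\log C$ zeros, but the best available under GRH is $\ll h\log C+\log C/\log\log C$ zeros in a window of length $h$. What the expansion $\frac{L'}{L}(s)=\sum_{|\gamma-\tau|\le1}\frac{1}{s-\rho}+O(\log C)$ actually gives is $\ll\log C/\delta=\log C\,\log(X+3)$, since there are $\ll\log C$ terms, each of size at most $1/\delta$. As a warning sign, your two claimed bounds together would give roughly $\log(q+|t|)\log\log X$, far stronger than the lemma.

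\textbf{The repair.} Combine the two correct bounds:
\begin{align*}
\int_{(\delta)}|\widetilde\Psi(w)|\,\Big|\frac{L'}{L}(z+w)\Big|\,|dw|
&\ll A_3\log(X+3)\int_{-\infty}^{\infty}\frac{\log(q+|t|+|v|)}{|\delta+iv|\,(1+|v|)^2}\,dv\\
&\ll A_3\log(X+3)\,\log\log(X+3)\,\log(q+|t|).
\end{align*}
So your diagnosis is reversed. The full $\log X$ loss from $1/\delta$ is affordable, and the extra $\log^{\epsilon}$ is just $\log(1/\delta)$, coming from the $1/|w|$ singularity of $\widetilde\Psi$.

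\textbf{Maass forms.} GRH for $L(s,f)$ does not give $|\alpha_f(p)|\le1$. With only $|\alpha_f(p)|\le p^{7/64}$, the prime-square terms are not trivially bounded and need a separate input, such as a Rankin--Selberg average. For the holomorphic forms actually used in the paper, Deligne's bound suffices.
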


\noindent \textit{Remark}: If $c$ is a fixed constant and $\Upsilon$ is a smooth function compactly supported on $[0,c]$, then the function $\Psi(x)=\Upsilon(cx/X)$ satisfies the conditions in Lemma~\ref{lem:CLee3.5} since $X^{-m} \ll x^{-m} (x/X)$ for positive integers $m$. Also, if $\Upsilon$ is a smooth function compactly supported on $(-\infty,c]$, then the function $\Psi(x)=\Upsilon(\frac{c\log x}{\log X})$ satisfies the conditions in the lemma.

\section{Kuznetsov's formula} \label{sec:kutz}

In this section, we will state some relevant results from spectral theory. We refer the reader to \cite{DI} and \cite{Iwaniec} for background reading.  We first introduce some notation that will appear in Kuznetsov's trace formula. There are three parts in Kuznetsov's trace formula -- contributions from holomorphic forms, Maass forms, and Eisenstein series -- and we now define the Fourier coefficients of these forms.

\subsubsection*{Holomorphic forms}

Let $B_{\ell}(q)$ be an orthonormal basis of the space of holomorphic cusp forms of weight $\ell$ level $q$, and $\theta_{\ell}(q)$ is the dimension of the space $S_{\ell}(q)$. We write $B_{\ell}(q) = \{f_1, f_2,...., f_{\theta_{\ell}(q)} \}$, and the Fourier expansion of $f_j \in B_{\ell}(q)$ is written as
$$ f_j(z) = \sum_{n \geq 1} \psi_{j, \ell}(n) (4\pi n)^{\ell / 2} \e{n z},$$
where $e(x)=e^{2\pi i x}$. We call $f$ a Hecke eigenform if it is an eigenfunction of all the Hecke operators $T(n)$ for $(n, q) = 1$.

\subsubsection*{Maass forms}
Let 
$
\lambda_j := \frac{1}{4}+\kappa_j^2,
$
where
$
0=\lambda_0 \leq \lambda_1\leq \lambda_2 \leq \dots
$
are the eigenvalues, each repeated according to multiplicity, of the Laplacian $-y^2 ( \frac{\partial^2}{\partial x^2} + \frac{\partial^2}{\partial y^2})$ acting as a linear operator on the space of cusp forms in $L^2(\Gamma_0(q) \backslash \mathbb{H})$, where by convention we choose the sign of $\kappa_j$ that makes $\kappa_j\geq 0$ if $\lambda_j\geq \frac{1}{4}$ and $i\kappa_j >0$ if $\lambda_j <\frac{1}{4}$. For each of the positive $\lambda_j$, we may choose an eigenvector $u_j$ in such a way that the set $\{u_1,u_2,\dots\}$ forms an orthonormal system, and we define $\rho_j(m)$ to be the $m$th Fourier coefficient of $u_j$, i.e.,
$$
u_j(z) =  \sum_{m\neq 0} \rho_j(m)W_{0, i\kappa_j} (4\pi |m|y) \e{mx}
$$
with $z=x+iy$, where $W_{0, it}(y) =  \left( y/\pi\right)^{1/2}K_{it}(y/2)$ is a Whittaker function, and $K_{it}$ is the modified Bessel function of the second kind. 

We call $u$ a Hecke eigenform if it is an eigenfunction of all the Hecke operators $T(n)$ for $(n, q) = 1$.

\subsubsection*{Eisenstein series}
Let $\mathfrak{c}$ be a cusp for $\Gamma_0(q)$.  We define $\varphi_{\mathfrak{c}}(m, t)$ to be the $m$th Fourier coefficient of the (real-analytic) Eisenstein series at $1/2 + it$:
\begin{align*}
E_{\mathfrak{c}}(z; 1/2 + it)  &=
  \delta_{\mathfrak{c} = \infty} y^{1/2 + it} + \varphi_{\mathfrak{c}}(0,t) y^{1/2 - it}  + \sum_{m\neq 0}  \varphi_{\mathfrak{c}} (m, t) W_{0, it} (4\pi |m|y) \e{mx},
\end{align*}
where $z=x+iy$.

\subsubsection*{Kuznetsov's trace formula} We state the version given by Lemma~10 of \cite{BM}.
\begin{lem}\label{lem:kuznetsov}
Let $\phi:(0,\infty)\rightarrow \mathbb{C}$ be smooth and compactly supported, and let $m,n,q$ be positive integers. Then
\begin{align*}
\sum_{\substack{c\geq 1 \\ c\equiv 0 \bmod{q}}} \frac{S(m,n;c)}{c} \phi
& \bigg( 4\pi \frac{\sqrt{mn}}{c}\bigg) = \sum_{j=1}^{\infty} \frac{\overline{\rho_j}(m)\rho_j(n) \sqrt{mn}}{\cosh(\pi \kappa_j)}\phi_+(\kappa_j)\\
& + \frac{1}{4\pi} \sum_{\mathfrak{c}} \int_{-\infty}^{\infty} \frac{ \sqrt{mn}}{\cosh (\pi t)} \, \overline{\varphi_{\mathfrak{c}} (m, t)}  \varphi_{\mathfrak{c} }(n, t)  \phi_+ (t) \,dt \\
& + \sum_{ \substack{\ell \geq 2 \mbox{\scriptsize{ \upshape{even}}} \\ 1 \leq j \leq \theta_{\ell}(q)} } (\ell-1)! \sqrt{mn} \, \overline{\psi_{j,\ell}}(m) \psi_{j,\ell} (n) \phi_h(\ell),
\end{align*}
where the Bessel transforms $\phi_+$ and $\phi_h$ are defined by
$$
\phi_+(r):=\frac{2\pi i}{\sinh(\pi r)} \int_0^{\infty} (J_{2ir}(\xi) - J_{-2ir} (\xi) ) \phi(\xi) \,\frac{d\xi}{\xi}
$$
and
$$
\phi_h(\ell) := 4 i^k \int_0^{\infty} J_{\ell - 1}(\xi) \phi(\xi) \,\frac{d\xi}{\xi},
$$
where $J_{\ell - 1}$ is the Bessel function of the first kind. 
\end{lem}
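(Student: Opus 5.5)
We would not rederive this from scratch. Instead we would obtain Lemma~\ref{lem:kuznetsov} by assembling two standard ingredients in the normalizations fixed above: the Petersson formula in every even weight $\ell$, and the Kuznetsov formula for the Laplacian on $\Gamma_0(q)\backslash\mathbb H$ in the form of Deshouillers--Iwaniec \cite{DI}. The statement is exactly Lemma~10 of \cite{BM}, so the work is mainly a careful matching of normalizations.

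\textbf{Step 1 (weight-zero Kuznetsov).} First we treat test functions of special shape. Form the Poincar\'e series $P_m(z,s)=\sum_{\gamma\in\Gamma_\infty\backslash\Gamma_0(q)}(\operatorname{Im}\gamma z)^s\,e(m\,\gamma z)$. We compute the inner product $\langle P_m(\cdot,s),P_n(\cdot,\bar s)\rangle$ in two ways.
\begin{itemize}
\item Unfolding gives $\delta(m,n)$ plus a sum over $c\equiv 0 \bmod q$ of $S(m,n;c)/c$, weighted by an explicit Bessel-type integral in $4\pi\sqrt{mn}/c$.
\item Parseval's identity, using the spectral decomposition into the Maass cusp forms $u_j$ and the Eisenstein series $E_{\mathfrak c}(\cdot,\tfrac12+it)$, gives $\overline{\rho_j}(m)\rho_j(n)$ and $\overline{\varphi_{\mathfrak c}(m,t)}\varphi_{\mathfrak c}(n,t)$ weighted by explicit transforms.
\end{itemize}
Integrating against a suitable weight in $s$ and removing the diagonal by the standard trick (\cite{DI}, \cite[Ch.~9]{Iwaniec}) yields the formula with no holomorphic terms. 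This holds for $\phi$ with $\phi(0)=\phi'(0)=0$ and enough decay that are orthogonal to all $J_{\ell-1}$, $\ell$ even, in $L^2(d\xi/\xi)$. The Bessel transform that appears is precisely $\phi_+$.

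\textbf{Step 2 (holomorphic part).} Next we handle $\phi=J_{\ell-1}$ for even $\ell\ge2$. Lemma~\ref{lem:usualPetersson} in weight $\ell$ expresses the Kloosterman side with $J_{\ell-1}$ as a sum of $\overline{\psi_{j,\ell}}(m)\psi_{j,\ell}(n)$, once the normalization $\psi_{j,\ell}(n)(4\pi n)^{\ell/2}$ of the orthonormal basis is inserted. This produces the factor $(\ell-1)!\sqrt{mn}$.

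\textbf{Step 3 (general $\phi$).} For general smooth compactly supported $\phi$ we use the Neumann--Sears--Titchmarsh expansion. The functions $J_{\ell-1}$ ($\ell$ even), together with the continuous family $(J_{2ir}-J_{-2ir})/\sinh(\pi r)$, give a complete expansion of $L^2((0,\infty),d\xi/\xi)$. So $\phi=\sum_{\ell}c_\ell J_{\ell-1}+\phi_0$, where the coefficient $c_\ell$ is proportional to $\phi_h(\ell)$ by the orthogonality $\int_0^\infty J_{\ell-1}J_{\ell'-1}\,d\xi/\xi=\delta_{\ell\ell'}/(2\ell-2)$, and $\phi_0$ is of the type handled in Step~1. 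Adding Step~1 for $\phi_0$ and Step~2 for each $J_{\ell-1}$ gives the three spectral terms. We note $(\phi_0)_+=\phi_+$, since $(J_{\ell-1})_+=0$ by orthogonality.

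\textbf{Main obstacle.} The delicate point is justifying this decomposition and the interchange of summations. One must check that $\phi_h(\ell)$ decays rapidly in $\ell$ and that $\phi_+(r)$ decays fast enough in $r$, both by repeated integration by parts using the smoothness and compact support of $\phi$. Together with spectral large-sieve or Weyl-law bounds on $\sum|\rho_j(n)|^2/\cosh(\pi\kappa_j)$, this gives absolute convergence on the spectral side. One must also treat possible exceptional eigenvalues $i\kappa_j>0$, where $\phi_+$ is still defined and finite. The remaining constants, such as $2\pi i$, $4i^k$ and $1/(4\pi)$, come from tracking the normalizations of $\rho_j$, $\varphi_{\mathfrak c}$ and $\psi_{j,\ell}$ against those in \cite{DI}; we would simply cite \cite{BM} for the final bookkeeping.
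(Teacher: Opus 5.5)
The paper gives no argument for this lemma; it simply quotes Lemma~10 of \cite{BM}, the Deshouillers--Iwaniec form of Kuznetsov's formula in these normalizations. Your final appeal to \cite{BM} is therefore exactly the paper's route. The derivation you sketch in Steps~1--3, however, has a genuine gap: it loses the diagonal terms $\delta(m,n)$, and the step that disposes of them is missing.

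\textbf{The Petersson diagonal.} Step~2 drops the $\delta(m,n)$ in Lemma~\ref{lem:usualPetersson}. In weight $\ell$ that lemma gives
\[
\sum_{c\equiv 0 \bmod q}\frac{S(m,n;c)}{c}\,J_{\ell-1}\Big(\frac{4\pi\sqrt{mn}}{c}\Big)=\frac{i^{\ell}}{2\pi}\Big(\Delta^{(\ell)}_q(m,n)-\delta(m,n)\Big).
\]
Take $c_\ell=2(\ell-1)\int_0^\infty J_{\ell-1}(\xi)\phi(\xi)\,\frac{d\xi}{\xi}$. Use also the identity $\sum_{\ell\ \mathrm{even}}(\ell-1)i^{\ell}J_{\ell-1}(\xi)=-\frac{\xi}{2}J_0(\xi)$, which follows by telescoping $J_{\nu-1}+J_{\nu+1}=\frac{2\nu}{\xi}J_\nu$. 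Then the holomorphic part leaves behind the extra term $\frac{\delta(m,n)}{2\pi}\int_0^\infty\phi(\xi)J_0(\xi)\,d\xi$. This is nonzero in general, for example when $\phi\ge 0$ is supported in $[1,2]$.

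\textbf{The weight-zero diagonal.} The lemma has no diagonal term, so the weight-zero formula for $\phi_0$ must carry exactly the opposite term. Hence your Step~1 cannot be diagonal-free for $\phi_0$. Concretely, off the support of $\phi$ we have $\phi_0=-\sum_\ell c_\ell J_{\ell-1}$. So $\phi_0$ has a tail of exact order $\xi^{-1/2}$, with coefficient proportional to $\int_0^\infty\phi J_0$, and it lacks the decay you assume in Step~1. In the spectral-to-Kloosterman Kuznetsov formula, the diagonal term is proportional to $\int h(t)\,t\tanh(\pi t)\,dt$, and this is precisely what governs that $\xi^{-1/2}$ tail.

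Your two omissions happen to cancel, so you land on the right formula. A proof, though, must keep both diagonal contributions and show they cancel, as in Kuznetsov's and Deshouillers--Iwaniec's argument. In this paper the point is harmless, since the lemma is only applied with $m^2\neq p$.

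Finally, tracking the constants in Step~2 gives $\phi_h(\ell)=4i^{\ell}\int_0^\infty J_{\ell-1}(\xi)\phi(\xi)\,\frac{d\xi}{\xi}$. So the $i^k$ in the displayed statement should read $i^{\ell}$.
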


\section{Initial Setup for the Proof of Proposition \ref{prop:main}} \label{sec:mainsetup}

By the same arguments as Lemma 4.1 of \cite{BCL} and from Equation (4.23) in \cite{ILS} \footnote{Equation (4.23) contains a minor typo: it should be $p \nmid N$ instead of $p |N$,} we have the following.

\begin{lem}\label{lem:trimpsum}
Let the notation be as in Lemma~\ref{lem:Explicit}, and $q \sim Q$, where $Q \geq 20$. Assuming GRH, we have
\begin{align*}
\frac{1}{\log Q}\sum_n \frac{\Lambda(n) c_f(n)}{\sqrt{n}} \widehat \Phi\left(\frac{\log n}{\log Q} \right) = \frac{1}{\log Q}\sum_{p\nmid q} \frac{\lambda_f(p) \log p}{\sqrt{p}} \widehat \Phi\left(\frac{\log p}{\log Q} \right)\\
-\frac{1}{\log Q}\sum_{p\nmid q} \frac{ \log p}{p} \widehat \Phi\left(\frac{\log p^2}{\log Q} \right) 
+O\left(\frac{\log \log Q}{\log Q}\right).
\end{align*}

\end{lem}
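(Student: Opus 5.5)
We will prove Lemma~\ref{lem:trimpsum} by splitting the sum over $n=p^\ell$ according to the exponent $\ell$ and whether $p\mid q$. The terms $\ell=1$ with $p\nmid q$ give the main term directly, since $c_f(p)=\alpha_f(p)+\beta_f(p)=\lambda_f(p)$ for $p\nmid q$. The terms $\ell=2$ with $p\nmid q$ give the second main term. Indeed $\alpha_f(p)\beta_f(p)=1$, so
\[
c_f(p^2)=\alpha_f(p)^2+\beta_f(p)^2=\lambda_f(p)^2-2=\lambda_f(p^2)-1 .
\]
The $-1$ yields exactly $-\frac{1}{\log Q}\sum_{p\nmid q}\frac{\log p}{p}\widehat\Phi\big(\frac{\log p^2}{\log Q}\big)$. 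It then remains to show that all other contributions are $O(\log\log Q/\log Q)$.

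First, the primes $p\mid q$. For square-free $q$ and a newform of trivial nebentypus, $\lambda_f(p)^2=1/p$, so $c_f(p^\ell)=\lambda_f(p)^\ell\ll p^{-\ell/2}$. These terms therefore total
\[
\ll \frac{1}{\log Q}\sum_{p\mid q}\frac{\log p}{p}\ll \frac{\log\log q}{\log Q},
\]
which is the source of the $\log\log Q$ in the error.

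Second, the prime powers $\ell\ge 3$ with $p\nmid q$. Deligne's bound $|\alpha_f(p)|=|\beta_f(p)|=1$ gives $|c_f(p^\ell)|\le 2$. The sum $\sum_{p,\,\ell\ge3}\log p\,p^{-\ell/2}$ converges, so this part is $O(1/\log Q)$.

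The main point is the term
\[
\frac{1}{\log Q}\sum_{p\nmid q}\frac{\lambda_f(p^2)\log p}{p}\,\widehat\Phi\Big(\frac{2\log p}{\log Q}\Big),
\]
which cannot be bounded trivially; the trivial bound is $O(1)$. Here we use GRH for the symmetric square $L(s,\mathrm{sym}^2 f)$, whose Dirichlet coefficients at $p\nmid q$ are $\lambda_f(p^2)$. We write $\frac{1}{p}=\frac{1}{p^{1/2}}\cdot p^{-1/2}$ and apply the analogue of Lemma~\ref{lem:CLee3.5} with $z=1$, or equivalently shift to the line $1/2$ via partial summation. Under GRH the prime number theorem for $\mathrm{sym}^2 f$ gives $\sum_{p\le x}\lambda_f(p^2)\log p\ll x^{1/2}\log^2(xq)$. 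Partial summation then shows that the $p$-sum is $O(\log\log Q)$, in fact $O(1)$ after smoothing, since $\widehat\Phi(2\log p/\log Q)$ is a smooth weight of the type in the Remark after Lemma~\ref{lem:CLee3.5}. After dividing by $\log Q$ this term is acceptable.

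This is exactly the argument of Lemma~4.1 of \cite{BCL} and (4.23) of \cite{ILS}, which we follow. The main obstacle is this symmetric-square step. It needs GRH for $\mathrm{sym}^2 f$, which is implicit in the assumption of GRH. One also has to handle the small primes, where partial summation from $p=2$ costs only a constant.
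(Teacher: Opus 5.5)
\textbf{Verdict: right approach, but the key symmetric-square step as you wrote it does not give the claimed bound.}

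Your decomposition is the argument of \cite[Lemma 4.1]{BCL} and \S 4 of \cite{ILS}, which is all the paper invokes, and these parts of it are correct:
\begin{itemize}
\item the $\ell=1,2$ terms with $p\nmid q$, via $c_f(p)=\lambda_f(p)$ and $c_f(p^2)=\lambda_f(p^2)-1$;
\item the primes $p\mid q$, via $c_f(p^\ell)=\lambda_f(p)^\ell$ and $\lambda_f(p)^2=1/p$;
\item the trivial bound for $\ell\ge 3$.
\end{itemize}

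The gap is in the step you single out as the crux. Write $\theta(x):=\sum_{p\le x,\,p\nmid q}\lambda_f(p^2)\log p$. Under GRH for $\mathrm{sym}^2 f$ you have $\theta(x)\ll x^{1/2}\log^2(xq)$. Partial summation started at $p=2$ then only gives
\[
\sum_{p\nmid q}\frac{\lambda_f(p^2)\log p}{p}\,\widehat\Phi\Big(\frac{2\log p}{\log Q}\Big)\ll\int_2^{\infty}\frac{\log^2(tQ)}{t^{3/2}}\,dt\asymp \log^2 Q .
\]
After dividing by $\log Q$ this is $O(\log Q)$, not $O(\log\log Q)$. The reason is that the GRH bound is weaker than the trivial bound $|\theta(x)|\le 3\sum_{p\le x}\log p\ll x$ for all $x\le(\log Q)^4$. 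So the small primes do not ``cost only a constant''.

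A symmetric-square analogue of Lemma~\ref{lem:CLee3.5} does not help. That lemma is stated only for $\mathrm{Re}\, z=1/2$, not $z=1$. Applying it on dyadic ranges $p\sim P$ gives $\sum_P P^{-1/2}\log^{1+\epsilon}(P+2)\log Q\asymp\log Q$, again dominated by small $P$.

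The missing step is to split at $T=(\log Q)^4$:
\begin{itemize}
\item For $p\le T$, use Deligne's bound $|\lambda_f(p^2)|\le 3$. This gives $\ll\sum_{p\le T}\frac{\log p}{p}\ll\log\log Q$.
\item For $p>T$, do partial summation with the GRH bound, boundary term included, using $t\le Q^{O(1)}$ on the support of $\widehat\Phi$. This gives $\ll T^{-1/2}\log^2(TQ)\ll 1$.
\end{itemize}
Hence the symmetric-square sum is $O(\log\log Q)$, which is exactly the permitted error after dividing by $\log Q$.

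Two corrections to your write-up follow from this. First, the small primes are a second source of the $\log\log Q$, not only the primes dividing $q$. Second, your claim that the sum is $O(1)$ after smoothing should be dropped. GRH gives nothing better than the trivial bound on the primes $p\le(\log Q)^4$, and that trivial bound is already of size $\log\log Q$.
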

We apply the prime number theorem and partial summation to the second sum over $p$ in the previous lemma and derive the following result.
\begin{lem}\label{lem:sump}
Let $q \sim Q$, where $Q \geq 20.$ We have
\begin{align*}
\frac{1}{\log Q}\sum_{p\nmid q} \frac{\log p}{p} \widehat \Phi\left(\frac{\log p^2}{\log Q} \right)  = \frac{1}{4} \Phi(0) + O\left(\frac{\log \log Q}{\log Q}\right).
\end{align*}

\end{lem}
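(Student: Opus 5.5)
We need to prove Lemma~\ref{lem:sump}: for $q\sim Q$,
\[
\frac{1}{\log Q}\sum_{p\nmid q}\frac{\log p}{p}\,\widehat\Phi\Big(\frac{2\log p}{\log Q}\Big)=\frac14\Phi(0)+O\Big(\frac{\log\log Q}{\log Q}\Big).
\]
The plan has three steps: remove the condition $p\nmid q$, apply the prime number theorem with partial summation, and evaluate the resulting integral using that $\Phi$ is even.

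First I would drop the condition $p\nmid q$. The primes dividing $q$ contribute at most
\[
\frac{1}{\log Q}\sum_{p\mid q}\frac{\log p}{p}\,\|\widehat\Phi\|_\infty .
\]
Since $q\ll Q$ has $\omega(q)\ll \log Q/\log\log Q$ prime factors, this sum is largest when the primes are the smallest ones. Then $\sum_{p\mid q}\log p/p\le \sum_{p\le C\log Q}\log p/p\ll \log\log Q$. Hence this contribution is $O(\log\log Q/\log Q)$, which is within the allowed error.

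Next I would handle the full sum over all primes. Write $\theta(x)=\sum_{p\le x}\log p=x+E(x)$, where the prime number theorem gives $E(x)\ll x/\log^2 x$ (Mertens' estimate $\sum_{p\le x}\log p/p=\log x+O(1)$ would also suffice). Since $\widehat\Phi$ has compact support, the sum is over $p\le Q^{3/2}$. Partial summation gives
\[
\sum_p\frac{\log p}{p}\,g(p)=\int_1^\infty \frac{g(x)}{x}\,dx+O(1),
\qquad g(x)=\widehat\Phi\Big(\frac{2\log x}{\log Q}\Big).
\]
The $O(1)$ error comes from integrating against $d\big(\sum_{p\le x}\log p/p-\log x\big)$, which is $O(1)$. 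This works because $g$ is bounded and $\int |g'(x)|\,dx\ll 1$, as $g'(x)\ll 1/(x\log Q)$ on a range of logarithmic length $\ll\log Q$. After dividing by $\log Q$, this error is $O(1/\log Q)$.

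Finally I would evaluate the main term by substituting $u=2\log x/\log Q$:
\[
\frac{1}{\log Q}\int_1^\infty \widehat\Phi\Big(\frac{2\log x}{\log Q}\Big)\frac{dx}{x}=\frac12\int_0^\infty\widehat\Phi(u)\,du .
\]
Since $\Phi$ is even, $\widehat\Phi$ is even, so $\int_0^\infty\widehat\Phi(u)\,du=\tfrac12\int_{-\infty}^{\infty}\widehat\Phi(u)\,du=\tfrac12\Phi(0)$ by Fourier inversion. The main term is therefore $\tfrac14\Phi(0)$. The only step needing any care is the bound for the primes dividing $q$, and it is elementary; GRH is not needed anywhere in this lemma.
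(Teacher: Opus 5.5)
Your proposal is correct and follows the same route as the paper, which justifies this lemma only by citing the prime number theorem and partial summation. Your removal of the primes dividing $q$ (which is where the $O(\log\log Q/\log Q)$ term comes from), the $O(1)$ partial-summation error via Mertens' estimate, and the evaluation $\frac12\int_0^\infty\widehat\Phi(u)\,du=\frac14\Phi(0)$ using evenness simply supply the details behind that one-line justification.
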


Proposition \ref{prop:main} follows from the following two propositions. 
\begin{prop} \label{prop:boundSigma_1} Assume GRH. Let $k \geq 4$ and $\Phi$ be an even Schwartz function with $\widehat \Phi$ compactly supported in $(-4, 4).$  Moreover, define 
$$ \Sigma_1^{\pm} := \frac{1}{N^{\pm}(Q)}\sumb_q \Psi\bfrac{q}{Q} \frac{1}{\log Q}\sum_{f \in \mathcal H_{k}(q)}\sum_{p\nmid q} \frac{\lambda_f(p) \log p}{\sqrt{p}} \widehat \Phi\left(\frac{\log p}{\log Q} \right).$$
Then 
$$ \Sigma_1^{\pm} \ll \frac{1}{\log Q}. $$

\end{prop}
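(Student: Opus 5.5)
The plan is to follow the argument of \cite{BCL} for the full orthogonal family. There is no root number here, and the only differences from \cite{BCL} are that $q$ is square-free and the weights are natural rather than harmonic; both are handled by Lemma~\ref{lem:PeterssonILS}. The target is $\Sigma_1^{\pm}\ll 1/\log Q$.

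\textbf{Reduction to $\Delta'_q$.} Since $N^\pm(Q)\asymp Q^2$ by Lemma~\ref{lem:asympforNQ}, it suffices to show
\[
\sumb_q \Psi\bfrac{q}{Q}\sum_{p\nmid q}\frac{\log p}{\sqrt p}\,\widehat\Phi\Big(\frac{\log p}{\log Q}\Big)\Delta_q^*(p)\ll Q^2 .
\]
For $p\nmid q$ we have $(p,q^2)\mid q$, so I apply Lemma~\ref{lem:truncLandm} with $n=1$ and
\[
a_j=\frac{\log j}{\sqrt j}\,\widehat\Phi\Big(\frac{\log j}{\log Q}\Big)
\]
when $j$ is prime, and $a_j=0$ otherwise. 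The hypothesis \eqref{propertysequenceaj} is exactly Lemma~\ref{lem:CLee3.5} under GRH, via the Remark after that lemma, with the $\log N$ term absorbing the coprimality condition. The tail $\Delta_q^\infty$ then contributes $\ll Q^{1+\epsilon}(X^{-1}+Y^{-1})$ for each $q$, hence $\ll Q^{2+\epsilon}(X^{-1}+Y^{-1})$ after summing over $q$. This is acceptable for $X=Y=Q^{\delta}$ with a small fixed $\delta>0$.

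\textbf{Petersson expansion.} In $\Delta'_q(p)$ I expand $\Delta_M(m^2,p)$ by Lemma~\ref{lem:usualPetersson}. The diagonal $m^2=p$ never occurs, so only the Kloosterman terms remain:
\[
\sum_{c}\frac{S(m^2,p;cM)}{cM}\,J_{k-1}\Big(\frac{4\pi m\sqrt p}{cM}\Big),\qquad q=LM,\ L\le X,\ m\le Y,\ p\le Q^{4-\eta}.
\]
I split according to the size of the Bessel argument.

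\textbf{Small primes.} When $p\le Q^{2-\eta}$, the bound in Lemma~\ref{lem:petertruncate} is enough to make this range trivially negligible.

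\textbf{Large primes.} For larger $p$, I open the Kloosterman sum into additive characters modulo $cM$ and convert these to Dirichlet characters. The sum over $p$ then becomes a character sum over primes:
\begin{itemize}
\item for non-principal characters, Lemma~\ref{lem:CLee3.5} (GRH for $L(s,\chi)$) saves almost the full $\sqrt p$;
\item the principal-character terms carry a Ramanujan sum, and these I bound directly.
\end{itemize}
This handles the range $p\le Q^{3-\eta}$, and also the range where $cM$ is large relative to $m\sqrt p/Q$, using $J_{k-1}(x)\ll x^{k-1}$ there.

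\textbf{Main obstacle: $Q^{3}\le p<Q^{4-\eta}$ with the Bessel function oscillating.} In this range I would exploit the average over the level. Detect square-freeness of $q=LM$ by $\sum_{a^2\mid q}\mu(a)$, then swap the roles of the moduli, so that the sum over $M$ (together with $c$) becomes the modulus of a Kloosterman sum summed over all multiples of a fixed small modulus. Applying Kuznetsov's formula, Lemma~\ref{lem:kuznetsov}, with $\phi$ built from $J_{k-1}$ and $\Psi$ turns this into spectral sums involving
\[
\sum_p \frac{\rho_j(p)\log p}{\sqrt p}\,\widehat\Phi\Big(\frac{\log p}{\log Q}\Big).
\]
I bound these sums over primes by Lemma~\ref{lem:CLee3.5} under GRH for Maass and holomorphic forms, and the Eisenstein contribution through GRH for Dirichlet $L$-functions.

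The hardest step is controlling the Bessel transforms $\phi_+,\phi_h$ uniformly, and checking that the spectral large sieve combined with GRH still gives a power saving all the way up to $p\approx Q^{4}$. I expect the details to mirror \cite{BCL} closely, since without $\varepsilon_{f,q}$ the conductor drop available there persists.
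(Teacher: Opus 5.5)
\textbf{Verdict: there is a genuine gap.} Your opening reduction via Lemma~\ref{lem:truncLandm} matches the paper. So does your final Kuznetsov step: detect square-freeness with $\sum_{a^2\mid q}\mu(a)$, sum over $M$ so that the moduli $c\ell a^2M$ run over multiples of $c\ell a^2$, apply Lemma~\ref{lem:kuznetsov} at level $c\ell a^2$, and use the bounds of \cite{BCL}. The gap is in the middle range: for $\Sigma_1$ the Dirichlet-character argument does not reach $Q^{3-\eta}$; it stops at $Q^{2}$.

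In $\Sigma_1$ the Kloosterman sum is $S(m^2,p;cM)$, so $p$ enters modulo $r=cM\ge M\asymp Q/L$. Write $S(m^2,p;r)=\phi(r)^{-1}\sum_{\chi \bmod r}\overline{\chi}(p)G_\chi(1)G_\chi(m^2)$. GRH saves $\sqrt P$ in each $p$-sum. However, $\phi(r)^{-1}\sum_\chi |G_\chi(1)G_\chi(m^2)|$ is genuinely of size about $r$. No further averaging over $M$ is possible, because $M$ sits in the modulus. The best this gives is of order $P^{\epsilon}\sum_c \min\{1,(\sqrt P/(cQ))^{k-1}\}$, which is $\approx P^{\epsilon}\sqrt P/Q$ once $P\ge Q^2$. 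This is the same barrier that limits \cite{ILS} to support $(-2,2)$ at a single level; the $q$-average does not help here.

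The $Q^{3}$ range you have in mind comes from the paper's treatment of $\Sigma_2$, and it depends on the root number:
\begin{itemize}
\item $\lambda_f(pq)$ produces $S(m^2,pLM;cM)$, which depends on $p$ only modulo $c$;
\item the Bessel argument becomes $m\sqrt{pL}/(c\sqrt M)$, so only moduli $c\lesssim\sqrt{P/Q}$ matter;
\item the sum $\sum_M\mu^2(M)\overline{\chi}(M)\sqrt M\,\Psi(LM/Q)$ gains a further $\sqrt Q$ from GRH for $L(s,\overline\chi)$.
\end{itemize}
None of this exists for $\Sigma_1$, so in your plan the range $Q^2\le p\le Q^3$ is not covered.

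Your small-primes claim also overreaches. For $p\le Q^{2-\eta}$, Lemma~\ref{lem:petertruncate} gives $\Delta_M(m^2,p)\ll Q^{\epsilon}p^{1/2}M^{-3/2}$. Summed up, this gives a total of $\ll Q^{-3/2+\epsilon}P$, which is negligible only for $P\le Q^{3/2-\eta}$. Using $J_{k-1}(x)\ll x^{k-1}$ directly you reach $Q^{2-1/k-\eta}$, which is still short of $Q^{2-\eta}$. This piece is harmless, because the character method does work below $Q^{2-\eta}$.

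The fix is to do what the paper does and use Kuznetsov in every dyadic range. The imported bounds $\mathfrak M_{\Dis}(P),\mathfrak M_{\Hol}(P)\ll Q^{\epsilon}\sqrt P/Q$ and $\mathfrak M_{\Ctn}(P)\ll Q^{\epsilon}(P^{1/4+\epsilon}+\sqrt P/Q)$ are uniform in $P$. After the prefactor $(Q\ell La^2)^{-1}$, every $P\le Q^{4-\delta}$ therefore contributes $\ll Q^{-\delta/4+\epsilon}$, and no splitting by the size of $p$ is needed.

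Before Kuznetsov applies, you also need the routine steps you omit:
\begin{itemize}
\item truncate to $a\le Q^{\epsilon}$;
\item drop the condition $p\nmid LMa^2$;
\item remove $(Lm,M)=1$ by M\"obius inversion, so that $M$ really runs over all integers.
\end{itemize}
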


\begin{prop} \label{prop:boundSigma_2}
     Assume GRH. Let $k \geq 4$ and $\Phi$ be an even Schwartz function with $\widehat \Phi$ compactly supported in $(-3, 3).$  Moreover, define 
\est{ \Sigma_2^{\pm} &:= \frac{1}{N^{\pm}(Q)}\sumb_q \Psi\bfrac{q}{Q} \frac{1}{\log Q}\sum_{f \in \mathcal H_{k}(q)}\varepsilon_{f, q}\sum_{p\nmid q} \frac{\lambda_f(p) \log p}{\sqrt{p}} \widehat \Phi\left(\frac{\log p}{\log Q} \right). \\
&= \frac{i^k}{N^{\pm}(Q)}\sumb_q \mu(q) \sqrt q\Psi\bfrac{q}{Q} \frac{1}{\log Q}\sum_{f \in \mathcal H_{k}(q)}\sum_{p\nmid q} \frac{\lambda_f(pq) \log p}{\sqrt{p}} \widehat \Phi\left(\frac{\log p}{\log Q} \right).}
Then 
$$ \Sigma_2^{\pm} =  \frac{1}{2} \int_{|x| > 1} \widehat \Phi(x) \> dx + O\left( \frac{1}{\log Q}\right). $$
\end{prop}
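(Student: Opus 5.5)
The plan is to open the twisted sum with the newform Petersson formula, rewrite the off-diagonal part in a shape where the average over the level $q$ can be executed by Poisson summation, and extract the main term from the zero frequency. Using \eqref{def:epsf}, the inner sum over $f$ in the second expression for $\Sigma_2^\pm$ is $\Delta_q^*(pq)$. Since $p\nmid q$ and $q$ is square-free, $(pq,q^2)=q$ divides $q$, so Lemma~\ref{lem:truncLandm} applies with $n=q$ and the sequence $a_j=\frac{\log j}{\sqrt{j}}\widehat\Phi\big(\frac{\log j}{\log Q}\big)$ supported on primes $j\nmid q$. Under GRH, Lemma~\ref{lem:CLee3.5} together with the Remark following it gives \eqref{propertysequenceaj} for this sequence.

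\emph{Step 1 (truncation).} Replace $\Delta_q^*$ by $\Delta_q'$. The error is $\ll q^{1/2}(X^{-1}+Y^{-1})Q^{\epsilon}$ per $q$. After multiplying by $\sqrt q$, summing over $q\sim Q$, and dividing by $N^\pm(Q)\asymp Q^2$, this is $O(Q^{-\delta})$ provided $X,Y\ge Q^{\epsilon'}$ are suitably small powers of $Q$.

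\emph{Step 2 (Petersson).} Expand $\Delta_M(m^2,pq)$ via Lemma~\ref{lem:usualPetersson}. The diagonal $m^2=pq$ is impossible because $pq$ is square-free and larger than $1$. We are left with
\[
\sum_{c}\frac{S(m^2,pLM;cM)}{cM}\,J_{k-1}\Big(\frac{4\pi m\sqrt{pLM}}{cM}\Big).
\]
Write the modulus as $cM$ with $c$ coprime to $M$; the non-coprime part is handled by a divisor-type argument and is negligible. Twisted multiplicativity then splits the sum as $S(m^2\bar c^{\,2},0;M)\,S(m^2\bar M^{2},pLM\bar M^{\,2};c)$. The first factor is the Ramanujan sum $c_M(m^2)=\mu(M)$, since $(m,M)=1$. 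Combined with $\mu(q)=\mu(L)\mu(M)$, the sign $\mu(M)$ cancels, so the $M$-sum carries a smooth, non-oscillating weight $\sqrt{M}\,M\cdot\frac1M\,\Psi(LM/Q)$.

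\emph{Step 3 (Poisson in $M$).} Execute the now-long sum over $M$ (square-freeness is detected by $\sum_{a^2\mid M}\mu(a)$, and $(M,pc)=1$ by M\"obius) with Poisson summation modulo $c$.
\begin{itemize}
\item The zero frequency replaces the Kloosterman sum by a complete sum that evaluates to a Ramanujan-type sum in $p$. Its contribution is a sum over primes of $J_{k-1}$ at argument $\asymp m\sqrt{p}/(c\sqrt{Q})$. By the prime number theorem and the integral $\int_0^\infty J_{k-1}(x)\,dx=1$, this should produce exactly the Iwaniec--Luo--Sarnak main term $\frac12\int_{|x|>1}\widehat\Phi(x)\,dx$. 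The terms with $m>1$ or $c$ large, as well as those with $L>1$, contribute $O(1/\log Q)$ after using $\nu(L)$ and the $1/m$ decay.
\item The nonzero frequencies $h$ produce additive characters $e(h\overline{(\cdot)}/c)$ in which $p$ appears. Expanding these into Dirichlet characters mod $c$, the sum over $p$ becomes $\sum_p \chi(p)\log p\,p^{-1/2}(\cdots)$ against a smooth weight. For non-principal $\chi$, Lemma~\ref{lem:CLee3.5} (GRH for $L(s,\chi)$) gives square-root cancellation in $p$. The principal characters contribute a lower-order term that should cancel or be $O(1/\log Q)$.
\end{itemize}

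\emph{Main obstacle.} The hard case is the range of $p$ up to $Q^{3-\delta}$. There, after Poisson, the dual sum still has length growing with $p/Q^2$, and the trivial bound on the character sums loses. My first attempt would be to reorganize the dual sum over $h$ and $c$ as a new sum of Kloosterman sums and apply Kuznetsov's formula (Lemma~\ref{lem:kuznetsov}) in the $c$ variable. This converts the sum over $p$ into spectral sums $\sum_p\rho_j(p)\log p\,p^{-1/2}(\cdots)$, which are controlled by GRH for Maass and holomorphic forms via the second part of Lemma~\ref{lem:CLee3.5}; a large sieve or Weyl law then bounds the spectral sum. The range $(-3,3)$ is exactly where the conductor $cQ$ of these spectral objects stays small enough relative to the length $Q^{3}$ of the $p$-sum, so checking this conductor bookkeeping is the crux. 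All error terms should come out as $O(Q^{-\delta})$, leaving $O(1/\log Q)$ from the prime number theorem in the main term.
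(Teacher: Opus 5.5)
\textbf{There is a genuine gap.} Steps 1--2 are right, and so is your observation that the root-number sign $\mu(M)$ cancels against the Ramanujan sum modulo $M$. But the proposal never bounds the non-main part for $p$ up to $Q^{3-\delta}$, and that bound is the whole content of the $(-3,3)$ range.

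You say the natural character-sum bound ``loses'' and defer to Kuznetsov in $c$. That route does not work as described. After Poisson in $M$ modulo $c$, the dual coefficient is $\sum^{*}_{x \bmod c} S(h,m^2x;c)\,e(pL\bar x/c)$, which is essentially a hyper-Kloosterman sum $\mathrm{Kl}_3(hm^2pL;c)$ rather than a classical Kloosterman sum, so Lemma~\ref{lem:kuznetsov} does not apply to it.

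Your length count is also off. For $c$ below the Bessel transition $c\asymp mL\sqrt{P/Q}$ and $M\asymp Q/L$, the stationary point of the Poisson integral sits at $|h|\asymp mL^2\sqrt{p}/Q^{3/2}$. This is $<1$ for $p\le Q^{3-\delta}$, so wherever Poisson modulo $c$ is effective only $h=0$ survives. What your set-up actually leaves open is twofold. First, the range $c\gtrsim Q$: there the dual sum is no longer empty, and neither trivial bounds in $M$ nor GRH in $p$ alone suffice. Second, the terms with large $a$ in $\mu^2(M)=\sum_{a^2\mid M}\mu(a)$, where $M/a^2$ runs over a short interval. Neither is carried out.

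\textbf{The missing idea} (this is how the paper proceeds) is to expand the $p$-dependence instead. After imposing $(p,c)=1$ and dropping the condition $p\nmid M$, write $e(\bar b pL/c)$ through Dirichlet characters modulo $c$.
\begin{itemize}
\item The $b$- and $d$-sums then evaluate exactly (Lemma~\ref{lem:sumseparatecM}) to $G_{\bar\chi}(L)G_{\bar\chi}(m^2)\mu(M)\bar\chi(M)$. The $\mu(M)$ cancels the one from $\varepsilon_{f,q}$, and the $M$-sum becomes $\sum_M\mu^2(M)\bar\chi(M)\sqrt{M}\,\Psi(LM/Q)J_{k-1}(\cdot)$.
\item By Mellin inversion this equals $L(-\tfrac12+\tfrac w2+s,\bar\chi)$ times an Euler product absolutely convergent for $\mathrm{Re}(w+2s)>2$. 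So GRH (Lindel\"of) gives square-root cancellation in $M$ uniformly in $c$, and square-freeness is handled for free.
\item GRH for $L(s,\chi)$ (Lemma~\ref{lem:CLee3.5}) gives cancellation in $p$.
\item The bound $\sum_\chi|G_{\bar\chi}(L)G_{\bar\chi}(m^2)|\le\phi(c)^2$ absorbs the factor $1/(c\phi(c))$.
\end{itemize}
Each $c$ then contributes $\ll Q^{-1+\epsilon}$. Summing over $c\ll mL\sqrt{P/Q}$ gives $Q^{-\delta/2+\epsilon}$ precisely when $P\le Q^{3-\delta}$; for larger $c$ one uses the power series of $J_{k-1}$. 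The $(-3,3)$ barrier comes from this count of moduli below the transition, not from spectral conductor bookkeeping.

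\textbf{Main-term justification.} Your reason for discarding $m>1$ and $L>1$ is wrong. After the $J$-integral, each pair $(m,L)$ enters with weight about $(mL)^{-2}$ times the same logarithmic $c$-sum that produces the main term. Decay alone therefore makes their total a convergent constant, not $O(1/\log Q)$. They actually drop out because the local factor of the $c$-sum at primes dividing $mL$ vanishes at $w=-1$, which is the identity $H_p(-1,2;L,m)=0$ in Lemma~\ref{lem:sumMandsumc}. That leaves only $m=L=1$, whose residue matches $N^\pm(Q)$ through $T(2)$.
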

We note that the stronger condition of $\widehat{\Phi}$ being supported on $(-3, 3)$ is needed when studying $\Sigma_{2}^{\pm}$ in Proposition \ref{prop:boundSigma_2}, while Proposition \ref{prop:boundSigma_1} can be proven with $\widehat{\Phi}$ being supported in the wider range $(-4, 4)$.

\subsection{Proof of Proposition \ref{prop:main} and Theorem \ref{thm:main}} \label{sec:proofofMainTheorem}
From Lemmas \ref{lem:asympforNQ}, \ref{lem:trimpsum}, \ref{lem:sump}, and Proposition~\ref{prop:boundSigma_1}, we have
\est{
&\frac{1}{N^{\pm}(Q) \log Q}  \sumb_q \Psi\bfrac{q}{Q}  \sum_{f \in \mathcal H_{k}(q)} \sum_n \frac{\Lambda(n)c_f(n)}{\sqrt{n}} \widehat \Phi\left( \frac{\log n}{\log Q}\right) \\
&\qquad =  -\frac{\Phi(0)}{2} + O\left( \frac{\log \log Q}{\log Q}\right) = -\frac{1}{2} \int_{-\infty}^{\infty} \widehat \Phi(x) \> dx + O\left( \frac{\log \log Q}{\log Q}\right).  
}
Taking $Q \rightarrow \infty$, we obtain \eqref{prop:withoutepsf} in Proposition~\ref{prop:main}. 

For \eqref{prop:withepsf}, from Lemma \ref{lem:sump} and Equation \ref{lem:asymDeltaq*}, we have
\est{& \frac{1}{N^{\pm}(Q)}\sumb_q \Psi\bfrac{q}{Q} \sum_{f \in \mathcal H_{k}(q)}\varepsilon_{f, q}  \frac{1}{\log Q}  \sum_{p \nmid q} \frac{\log p}{p} \widehat \Phi\left( \frac{\log p^2}{\log Q}\right) \\
&\qquad = \frac{i^{k} \Phi(0)}{4N^{\pm}(Q) }\sumb_q \Psi\bfrac{q}{Q}  \mu(q) \sqrt q\sum_{f \in \mathcal H_{k}(q)}\lambda_f(q) + O\left( \frac{\log \log Q}{\log Q}\right) \\
&\qquad \ll Q^{-1/6 + \epsilon} + \frac{\log \log Q}{\log Q}. }
By Lemma \ref{lem:trimpsum}, Proposition \ref{prop:boundSigma_2} and the above equation, we obtain \eqref{prop:withepsf} in Proposition \ref{prop:main}. 

To prove Theorem~\ref{thm:main}, we use the explicit formula from Lemmas~\ref{lem:Explicit} and \ref{lem:asympforNQ} and Proposition~\ref{prop:main} to deduce that
\begin{align*}
    &\lim_{Q \rightarrow \infty} \mathscr {OL}^{\pm} (Q) \\  &= - \lim_{Q \rightarrow \infty}\frac{1}{N^{\pm}(Q) \log Q}\sum_{q} \Psi\bfrac{q}{Q} \sum_{f \in \mathcal H_{k}(q) } \left( \frac{1 \pm \varepsilon_f}{2} \right) \sum_{n=1}^{\infty}\frac{\Lambda(n)[c_f(n) + c_{\bar f}(n)]}{\sqrt{n}} \widehat\Phi\left(\frac{\log n}{\log Q}\right) \\
    &+ \lim_{Q \rightarrow \infty} \frac{1}{N^{\pm}(Q)}\sum_{q} \Psi\bfrac{q}{Q}  \sum_{f \in \mathcal H_{k}(q)} \left( \frac{1 \pm \varepsilon_f}{2} \right)\left[\int_{-\infty}^{\infty} \Phi(x) \> dx +O_k\left(\frac{1}{\log Q} \right) \right], \\
    &= \frac{ 1}{2} \int_{-\infty}^{\infty} \widehat \Phi(x) \> dx  \mp  \frac{1}{2} \int_{|x| > 1} \widehat \Phi(x) \> dx   + \int_{-\infty}^{\infty} \Phi(x) \> dx.
\end{align*}
This proves Theorem~\ref{thm:main}.

\section{Truncation of the sums $\Sigma_{1}^{\pm}$ and $\Sigma_{2}^{\pm}$} \label{sec:truncationSigma1and2}

Applying Petersson's trace formula, Lemma~\ref{lem:PeterssonILS} and Lemma \ref{lem:truncLandm} to $\Sigma_{1}^{\pm}$ and $\Sigma_2^{\pm}$ defined in Proposition~\ref{prop:boundSigma_1} and \ref{prop:boundSigma_2}, respectively, we deduce that for $i = 1, 2$
\es{ \label{eqn:Sigmaafterhecke} 
\Sigma_i^{\pm} = \Sigma_{i, '}^{\pm} + \Sigma_{i, \infty}^{\pm},}
where
\es{ \label{def:Sigmai'}
 \Sigma_{i, '}^{\pm} &= \frac{k-1}{12N^{\pm}(Q)\log Q}\sumb_q \Psi\left(\frac{q}{Q}\right) B_{i, q}  \sum_{p\nmid q} \frac{ \log p}{\sqrt{p}} \widehat \Phi\left(\frac{\log p}{\log Q} \right) 
\\
& \qquad \times \sum_{\substack{LM = q \\ (L, M) = 1 \\ L \leq X}} \mu(L) M \sum_{\substack{(m, M) = 1 \\ m \leq Y}} \frac{\Delta_{M}(m^2,p t_{i, q})}{m}}
with $B_{1, q} = 1$, $t_{1, q} = 1$, $B_{2, q} = i^{k}\mu(q)\sqrt q$, $t_{2, q} = q$,
and 
\es{\label{def:Sigmaiinfty}  \Sigma_{i, \infty}^{\pm } = \frac{1}{N^{\pm}(Q)\log Q}\sumb_q \Psi\left(\frac{q}{Q}\right) B_{i, q}   \sum_{p\nmid q} \frac{ \log p }{\sqrt{p}} \widehat \Phi\left(\frac{\log p}{\log Q} \right) \Delta_q^{\infty}(pt_{i, q}). }

Let $a_j = \frac{\log j}{\sqrt j} \widehat \Phi\left(\frac{\log j}{\log Q} \right) $ if $j$ is prime and 0 otherwise. From Lemma \ref{lem:CLee3.5}, the sequence $(a_j)$ satisfies the property \eqref{propertysequenceaj}, and thus
$$\Sigma_{i, \infty}^{\pm }\ll_{k, \epsilon} (X^{-1} + Y^{-1}) (QXY)^{\epsilon/10} \ll Q^{-\epsilon/2}$$
upon choosing $X, Y$ to be $Q^{\epsilon}$. 

From now on, we focus on bounding $\Sigma_{i, '}^{\pm}.$ For $i=1$, we write

\est{  
\Sigma_{1, '}^{\pm}
&= \frac{k-1}{12N^{\pm}(Q)\log Q}\sum_q  \sum_{a^2 | q} \mu(a) \Psi\left(\frac{q}{Q}\right)   \sum_{p\nmid q} \frac{ \log p}{\sqrt{p}} \widehat \Phi\left(\frac{\log p}{\log Q} \right) \\
& \ \ \ \times \sum_{\substack{LM = q \\ (L, M) = 1 \\ L \leq Q^{\epsilon}}} \mu(L) M \sum_{\substack{(m, M) = 1 \\ m \leq Q^{\epsilon}}} \frac{\Delta_{M}(m^2,p)}{m}\\
&= \frac{k-1}{12 N^{\pm}(Q) \log Q}\sum_{\substack{a \\ (L, a) = 1}} \mu(a) \sum_{\substack{L, M \\ (L, M) = 1 \\ L \leq Q^{\epsilon}}}  \Psi\left(\frac{L Ma^2}{Q}\right)  \mu(L)Ma^2    \\
& \ \ \ \times \sum_{p\nmid LMa^2} \frac{ \log p}{\sqrt{p}} \widehat \Phi\left(\frac{\log p}{\log Q} \right) \sum_{\substack{(m, Ma) = 1 \\ m \leq Q^{\epsilon} }}\frac{\Delta_{Ma^2}(m^2,p)}{m}.}

Next, we truncate the sum over $a$ and obtain the following result.
\begin{lem} \label{lem:truncatea} We have
    $$\Sigma_{1,'}^{\pm} = \Sigma_{1, main}^{\pm} + O(Q^{-9\epsilon/10}),$$
    where $\Sigma_{1, main}^{\pm}$ is defined analogously to $\Sigma_{1, '}^{\pm}$ but with $a \leq Q^{\epsilon}.$
\end{lem}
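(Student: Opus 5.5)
The plan is to bound directly the contribution of the terms with $a > Q^{\epsilon}$ to $\Sigma_{1,'}^{\pm}$, using only the bound for $\Delta_{Ma^2}(m^2,p)$ from Lemma~\ref{lem:petertruncate}. Since $p$ is prime and $m \leq Q^{\epsilon}$, we can take $p > Q^{2\epsilon}$ (the few small primes are harmless), so $m^2 \neq p$ and the diagonal term $\delta(m^2,p)$ vanishes. Lemma~\ref{lem:petertruncate} with level $Ma^2$ then gives
\[
\Delta_{Ma^2}(m^2,p) \ll \frac{\tau(Ma^2)(m^2p)^{\epsilon}}{Ma^2}\left(\frac{m^2 p}{\sqrt{m^2p}+Ma^2}\right)^{1/2} \ll \frac{Q^{\epsilon'}\, m\, p^{1/2}}{(Ma^2)^{3/2}},
\]
where we use that $(m^2,p,Ma^2)=1$ because $p\nmid Ma^2$ and $(m,Ma)=1$.

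Next I insert this bound into the expression for the $a$-tail. The factors $Ma^2$ cancel partially, leaving the weight $Ma^2 \cdot (Ma^2)^{-3/2} = (Ma^2)^{-1/2}$. By the support of $\Psi$ we have $LMa^2 \asymp Q$ and $L \leq Q^{\epsilon}$, so $Ma^2 \asymp Q/L \geq Q^{1-\epsilon}$. The prime sum is $\sum_p \frac{\log p}{\sqrt p}\,p^{1/2}\,|\widehat\Phi| \ll Q^{4}$ trivially, since the support of $\widehat\Phi$ lies in $(-4,4)$. This trivial bound is too weak, so the prime sum must be handled more carefully.

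The better route is to avoid putting absolute values on the $p$-sum. Instead I would keep the $f$-structure: return from $\Delta_{Ma^2}(m^2,p)$ to $\sum_f^h \lambda_f(m^2)\lambda_f(p)$ and apply Lemma~\ref{lem:CLee3.5} (GRH) to the prime sum for each $f$. This gives
\[
\sum_{p\nmid LMa^2}\frac{\lambda_f(p)\log p}{\sqrt p}\widehat\Phi\left(\frac{\log p}{\log Q}\right) \ll Q^{\epsilon/10}.
\]
Combining this with Deligne's bound $|\lambda_f(m^2)|\ll m^{\epsilon}$ and the harmonic-weight estimate $\sum^h_{f\in B_k(Ma^2)} 1 \ll 1$, we get the following for each $a,L,M,m$:
\[
\Big|\sum_p \cdots\, \Delta_{Ma^2}(m^2,p)\Big| \ll Q^{\epsilon/10}.
\]
This needs a Hecke basis for $B_k(Ma^2)$, which is legitimate via oldform decomposition, as in Lemma~\ref{lem:truncLandm}; for oldforms the GRH bound still applies to the underlying newforms, and the bad primes are handled by the $\log N$ term of Lemma~\ref{lem:CLee3.5}. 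Summing $1/m$ over $m\le Q^{\epsilon}$ costs a factor $\log Q$.

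The remaining sum is then
\[
\frac{1}{N^{\pm}(Q)}\sum_{a>Q^{\epsilon}}\ \sum_{L\le Q^{\epsilon}}\ \sum_{M:\,LMa^2\asymp Q} Ma^2\, Q^{\epsilon/10} \ll \frac{Q^{\epsilon/10}}{Q^2}\sum_{a>Q^{\epsilon}}\ \sum_{L\le Q^{\epsilon}} \frac{Q}{L}\cdot\frac{Q}{La^2}.
\]
Since $N^{\pm}(Q)\asymp Q^2$ by Lemma~\ref{lem:asympforNQ}, this is
\[
\ll Q^{\epsilon/10}\sum_{a>Q^{\epsilon}} a^{-2} \ll Q^{-9\epsilon/10},
\]
as required. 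The main obstacle is the step that justifies the GRH bound uniformly for the orthogonal basis of $S_k(Ma^2)$ with $a$ non-squarefree. If that causes trouble, I would instead use the Weil-bound expansion of Lemma~\ref{lem:usualPetersson} and exploit cancellation in $p$ via Lemma~\ref{lem:CLee3.5} applied to Dirichlet characters after opening the Kloosterman sums.
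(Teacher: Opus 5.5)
Your final route is the paper's argument, and it is correct. You write $\Delta_{Ma^2}(m^2,p)$ as a harmonic sum over a Hecke basis, bound $\lambda_f(m^2)$ by Deligne, apply the GRH bound of Lemma~\ref{lem:CLee3.5} to the prime sum (the paper justifies this the same way, using that for $p\nmid Ma^2$ one has $\lambda_f(p)=\lambda_g(p)$ for a newform $g$ of level dividing $Ma^2$), and then sum $\sum_{a>Q^{\epsilon}}a^{-2}$; the opening attempt via Lemma~\ref{lem:petertruncate} and the Kloosterman-sum fallback are not needed.
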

\begin{proof}
We would like to bound 
\begin{align*}
&\ll \frac{1}{Q^2 \log Q} \sum_{a \geq A}  \sum_{\substack{L, M \\ L \leq Q^{\epsilon}}} \Psi\left( \frac{LMa^2}{Q}\right)  Ma^2 
\sum_{\substack{m \leq Q^{\epsilon}  }} \frac{\tau(m^2)}{m}  \left| \sumh_{f\in B_k(Ma^2)} \sum_{p \nmid LMa^2} \frac{ \log p \lambda_f(p)} {\sqrt{p}} \widehat \Phi\left(\frac{\log p}{\log Q} \right)\right|  \\
&\ll \frac{1}{Q^2 \log Q} \sum_{a \geq A}  \sum_{\substack{L, M \\ L \leq Q^{\epsilon}}} \Psi\left( \frac{LMa^2}{Q}\right)  Ma^2  
\sum_{\substack{m \leq Q^{\epsilon}  }} \frac{\tau(m^2)}{m} \log^{2 + \epsilon} q,
\end{align*}
upon applying Lemma~\ref{lem:CLee3.5} to the sum over $p$.  We remind the reader that we have used the fact that $\lambda_f(p) = \lambda_g(p)$ for some Hecke newform $g$ of level dividing $Ma^2$.  The quantity above is bounded by
\begin{align*}
&\ll \frac{Q^{\epsilon/10}}{Q}  \sum_{a \geq A} \sum_{\substack{L, M \\ L \leq q^{\epsilon}}} \Psi\left( \frac{LMa^2}{Q}\right)  \frac{1}{L}  \ll Q^{\epsilon/10}  \sum_{a \geq A} \frac{1}{a^2} \sum_{L }  \frac{1}{L^2} \ll \frac{Q^{\epsilon/10}}{A} \ll Q^{-9\epsilon/10}
\end{align*} 
upon choosing $A = Q^{\epsilon}$. 

\end{proof}

\section{Proof of Proposition \ref{prop:boundSigma_1} - Bounding $\Sigma_{1, main}^{\pm}$}

By Lemma \ref{lem:truncatea}, Proposition \ref{prop:boundSigma_1} immediately follows from the following. 
\begin{lem} \label{lem:boundforSigma1main} Let notation be as above. Then 
$$ \Sigma_{1, main}^{\pm} \ll \frac{1}{\log Q}.
$$    
\end{lem}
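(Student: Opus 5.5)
Since $p$ is prime and $m^2$ is a square, $m^2 = p$ is impossible, so the diagonal term $\delta(m^2,p)$ in Lemma~\ref{lem:usualPetersson} never contributes. In $\Sigma_{1,main}^{\pm}$ we therefore replace $\Delta_{Ma^2}(m^2,p)$ by its Kloosterman part
$$2\pi i^{-k}\sum_{c\ge1}\frac{S(m^2,p;cMa^2)}{cMa^2}J_{k-1}\bfrac{4\pi m\sqrt p}{cMa^2}.$$
Here $a,L,m\le Q^{\epsilon}$ and $M\asymp Q/(La^2)$, while $p\le Q^{4-\delta}$. The Bessel argument is then about $m\sqrt p/(cM)\ll Q^{1-\delta/2+O(\epsilon)}/c$, which can exceed $1$, so the trivial Weil bound (Lemma~\ref{lem:petertruncate}) is not enough when $p>Q^{2}$. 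The natural route, as in \cite{BCL}, is to sum over the modulus $M$ first, exploiting the average over levels.

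\textbf{Step 1 (smooth the $M$-sum).} For fixed $a,L,m,p$, remove the condition $(M,Lma p)=1$ by M\"obius inversion (its moduli are at most $Q^{O(\epsilon)}$, or only $p$, which can be handled separately). Then write $M\Psi(LMa^2/Q)$ as a smooth weight in $M$. The quantity to bound becomes
$$\sum_p\frac{\log p}{\sqrt p}\widehat\Phi\Big(\frac{\log p}{\log Q}\Big)\sum_{c}\sum_{M}\frac{S(m^2,p;cMa^2)}{c}\,J_{k-1}\Big(\frac{4\pi m\sqrt p}{cMa^2}\Big)\Psi\Big(\frac{LMa^2}{Q}\Big).$$

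\textbf{Step 2 (Kuznetsov in the other direction).} Reorganize by the modulus $r=cMa^2$ with $r\equiv 0 \bmod a^2$. The weight $\phi(x)$, built from $J_{k-1}(x)$ times the smooth cutoff in $M$, is smooth and compactly supported. Apply Lemma~\ref{lem:kuznetsov} at level $a^2$ (or $a^2c$) to turn the sum over $r$ into a spectral sum of $\overline{\rho_j}(m^2)\rho_j(p)\phi_+(\kappa_j)$, plus Eisenstein and holomorphic terms. For Hecke eigenforms, $\rho_j(p)$ is $\rho_j(1)\lambda_j(p)$, up to the usual oldform bookkeeping.

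\textbf{Step 3 (sum over primes).} Now the $p$-sum, for each fixed spectral form, is $\sum_p \lambda_j(p)\log p\,p^{-1/2}\cdot(\text{smooth weight in }p)$. Lemma~\ref{lem:CLee3.5}, for Maass and holomorphic forms under GRH, bounds this by $\log^{2+\epsilon}(Q(1+|\kappa_j|))$. For the Eisenstein contribution, the coefficients are divisor-type sums of characters, and the same lemma applies to $L(s,\chi)$. Any main term from trivial characters would be of the form $\sum_p \log p/p\cdots$, and should either be absent or of size $O(1)$ before normalization.

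\textbf{Step 4 (collect).} The Bessel transforms $\phi_+,\phi_h$ decay rapidly once $|\kappa_j|$ or $\ell$ exceeds the size of the Bessel argument, which is $\ll Q^{1+O(\epsilon)}$ roughly. So the spectral sum, weighted by $|\rho_j(m^2)\rho_j(1)|$, is controlled by the large sieve / Weyl law on level $a^2$ by a power $Q^{O(\epsilon)}$ times the correct normalizing size. Dividing by $N^\pm(Q)\log Q\asymp Q^2\log Q$ and summing the small variables gives $\ll 1/\log Q$, provided the bounds are carried out carefully.

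\textbf{Main obstacle.} I expect the main obstacle to be Step 2–4's uniformity. This means tracking exactly how $\phi_\pm$ depends on the ranges of $p$ and $M$ (the transition range $x\asymp 1$ of the Bessel function), and making sure the spectral large sieve loses no power of $Q$. Only then does the GRH saving from Lemma~\ref{lem:CLee3.5} (square-root cancellation in $p$) deliver the final $1/\log Q$.
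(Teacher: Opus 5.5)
\paragraph{Verdict.} Your outline has the same architecture as the paper's argument, which follows \cite{BCL}. The proof content, however, lies in quantitative spectral bounds that your Steps 3--4 do not supply. Your one concrete claim about the hardest piece is wrong, and your final accounting does not yield $1/\log Q$.

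\paragraph{The shared skeleton, and two points it glosses over.} The common steps are: expand $\Delta_{Ma^2}(m^2,p)$ into Kloosterman sums, apply Kuznetsov to the $M$-sum, and use GRH (Lemma~\ref{lem:CLee3.5}) for the prime sum against each spectral form. Two points need care. First, the weight depends on $M$ and not only on $cMa^2$, so one must fix $c$ and apply Kuznetsov at level $c\ell a^2$; level $a^2$ alone does not work. Second, the Kuznetsov test function depends on $p$. The paper removes this dependence with the two-variable Fourier transform $\widehat H(u,v)$. Then $h_u(\xi)=J_{k-1}(\xi)e(u\xi)W(\xi/X)$ depends only on the dyadic size $P$, and the spectral-side prime sum becomes $\sum_p \frac{\log p}{\sqrt p}V(p/P)e(vp/P)\lambda_j(p)$.

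\paragraph{The continuous spectrum.} At least for the principal-character part, the Eisenstein coefficient at a prime carries $p^{\pm it}$. So the prime sum $\sum_p \frac{\log p}{\sqrt p}p^{\pm it}V(p/P)e(vp/P)$ has a prime-number-theorem main term $P^{1/2\pm it}\int_0^\infty y^{-1/2\pm it}V(y)e(vy)\,dy$, which is of size $\sqrt P$ for bounded $t,v$. GRH gives no cancellation here. It is not a $\sum_p \log p/p$ term of size $O(1)$, as your Step 3 asserts. This piece needs separate treatment. Correspondingly, the paper's input for it, Proposition~\ref{prop:Ctn} (from \cite[Proposition 6.3]{BCL}), is $\mathfrak M_{\Ctn}(P)\ll Q^{\epsilon}(P^{1/4+\epsilon}+\sqrt P/Q)$. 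This is weaker than the bound $Q^{\epsilon}\sqrt P/Q$ of Proposition~\ref{prop:DisHol} for $\mathfrak M_{\Dis}$ and $\mathfrak M_{\Hol}$.

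\paragraph{The final accounting.} A bound of $Q^{O(\epsilon)}$ times the normalizing size, divided by $N^\pm(Q)\log Q\asymp Q^2\log Q$, gives only $Q^{O(\epsilon)}/\log Q$. The sums over $a,L,\ell,m\le Q^\epsilon$ cost a further $Q^{O(\epsilon)}$. What is needed, and what the paper proves, is a power saving for each fixed $a,L,\ell,m$. Summing the three spectral bounds over dyadic $P\le Q^{4-\delta}$ gives
\[
\mathfrak S\ll \frac{Q^{\epsilon}}{Q\ell La^2}\sum_{P\le Q^{4-\delta}}\Bigl(\frac{\sqrt P}{Q}+P^{1/4+\epsilon}\Bigr)\ll Q^{-\delta/4+\epsilon},
\]
and this absorbs all the $Q^{O(\epsilon)}$ losses. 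Both terms would be of size $Q$ at $P=Q^4$. So the saving comes exactly from $\widehat\Phi$ being supported strictly inside $(-4,4)$, and your proposal never uses $\delta$ to gain anything.

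\paragraph{What is missing.} To close the gap you must prove bounds of the shape above, or cite them from \cite{BCL} as the paper does. Concretely, this means:
\begin{itemize}
\item estimates for $h_+$ and $h_h$ across the transition range $X\asymp Lm\sqrt P/(cQ)$;
\item summing these over $c$, for example with a spectral large sieve at level $c\ell a^2$;
\item a genuine treatment of the Eisenstein main term.
\end{itemize}
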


Since $a, L, m \le Q^\epsilon$, it suffices to show that for fixed $a, L, m$ we have

\begin{align}\label{eqn:Sigma1main1}
\frac{1}{Q^2} \sum_{\substack{M \\ (Lm, M) = 1}}  \Psi\left(\frac{L Ma^2}{Q}\right)  M  \sum_{p\nmid LMa^2} \frac{ \log p}{\sqrt{p}} \widehat \Phi\left(\frac{\log p}{\log Q} \right) \Delta_{Ma^2}(m^2,p) \ll Q^{- \delta_0}
\end{align}
for some $\delta_0 >0$.  The steps of this are essentially the same as in the work \cite{BCL}.  First, following the same sort of calculation as in \S 5.3 of \cite{BCL}, we may drop the condition $p \nmid LMa^2$ with an error of $\ll \frac{1}{Q^{1-\epsilon}}$.  To be precise, since $p\neq m^2$, Lemma~\ref{lem:petertruncate} implies that
\begin{align*}
\Delta_{Ma^2}(m^2,p)
& \ll  \frac{(pmMa)^{\epsilon} }{ M a^2}(pm^2)^{1/4},
\end{align*}
and since $aLm \ll Q^\epsilon$ the contribution of those $p|LMa^2$ is bounded by

\begin{align*}
    \frac{Q^\epsilon}{Q^2} \sum_{\substack{M }}  \Psi\left(\frac{L Ma^2}{Q}\right)    \sum_{p|LMa^2} \frac{ \log p}{p^{1/4}}  \left|\widehat{\Phi}\left(\frac{\log p}{\log Q} \right)\right| \ll Q^{-1+\epsilon}.
\end{align*}

This reduces the desired bound in \eqref{eqn:Sigma1main1} to proving that
\begin{align*}
     \frac{1}{Q^2} \sum_{p} \frac{ \log p}{\sqrt{p}} \widehat \Phi\left(\frac{\log p}{\log Q} \right)  \sum_{\substack{M \\ (Lm, M) = 1}}  \Psi\left(\frac{L Ma^2}{Q}\right)  M  \Delta_{Ma^2}(m^2,p) \ll Q^{- \delta_0}
\end{align*}
for some $\delta_0 >0$.  We further express the coprimality condition $(Lm, M) = 1$ using M\"obius inversion so that the quantity above is

\begin{align*}
    \frac{1}{Q^2} \sum_{p} \frac{ \log p}{\sqrt{p}} \widehat \Phi\left(\frac{\log p}{\log Q} \right) \sum_{\ell | Lm} \mu(\ell) \ell  \sum_{M}  \Psi\left(\frac{\ell L Ma^2}{Q}\right)  M  \Delta_{\ell a^2 M}(m^2,p),
\end{align*}
but since $Lm \ll Q^\epsilon$ it suffices to show that 

\begin{align}\label{eqn:frakSbdd}
    \mathfrak S :=  \frac{1}{Q^2} \sum_{p} \frac{ \log p}{\sqrt{p}} \widehat \Phi\left(\frac{\log p}{\log Q} \right)   \sum_{M}  \Psi\left(\frac{\ell L Ma^2}{Q}\right)  M  \Delta_{\ell a^2 M}(m^2,p) \ll Q^{-\delta_0}
\end{align}for some $\delta_0 >0$ and for fixed $a, L, \ell, m \ll Q^\epsilon$.  Since $\Psi$ is compactly supported on the positive real numbers, $M \asymp \frac{Q}{\ell L a^2}$, and writing $\mathfrak W(x) = \Psi(x) x$, we have
$$\Psi\left(\frac{\ell L Ma^2}{Q}\right)  M  = \frac{Q}{\ell L a^2} \mathfrak W\left(\frac{\ell L Ma^2}{Q}\right).
$$Note that $\mathfrak W$ is still a smooth, compactly supported function on the positive real line.  Further, since $p\neq m^2$ for any prime $p$ and integer $m$, we write 

\begin{align*}
    \mathfrak S &=  \frac{1}{Q\ell La^2} \sum_{p} \frac{ \log p}{\sqrt{p}} \widehat \Phi\left(\frac{\log p}{\log Q} \right)   \sum_{M}  \mathfrak W\left(\frac{\ell L Ma^2}{Q}\right)   \sum_{c\ge 1} \frac{S(m^2, p; c\ell a^2M)}{c\ell a^2 M} J_{k-1} \bfrac{4\pi m \sqrt{p}}{c\ell a^2 M} \\
    &=  \frac{1}{Q\ell La^2} \sum_{c\ge 1} \sumd_P \sum_{p} \frac{ \log p}{\sqrt{p}} \widehat \Phi\left(\frac{\log p}{\log Q} \right) V\bfrac{p}{P}  \sum_{M}  \frac{S(m^2, p; c\ell a^2M)}{c\ell a^2 M} f \bfrac{4\pi \sqrt{pm^2}}{c\ell a^2M},
\end{align*}
where 
\begin{align}\label{eqn:fdef}
    f(\xi) = \mathfrak W\bfrac{4\pi L \sqrt{pm^2}}{cQ \xi} J_{k-1}(\xi),
\end{align}
and we have introduced a smooth partition of unity.  To be more precise, in the statement of  Proposition \ref{prop:boundSigma_1}, we have assumed that $\widehat \Phi$ is supported on $(-4, 4)$, so there exists some $\delta > 0$ such that $\widehat \Phi(x) = 0$ for $|x| > 4-\delta$.  Then we let $\sumd_{\!\!\!P}$ denote a sum over $P = 2^j \le Q^{4 - \delta}$ for $j\ge 0$ and where $V$ is a smooth function compactly supported on $[1/2, 3]$ satisfying $\sumd_{\!\!\!P} V\bfrac{p}{P} = 1$ for all $1\le p \le Q^{4-\delta}$.

It will be convenient for us to extract the dependence of $f(\xi)$ on $p$, replacing it by dependence on $P$ instead.  To this end, let
\begin{equation}
    X = \frac{4\pi L \sqrt{Pm^2}}{cQ},
\end{equation}
and thinking of $\lambda = \frac{p}{P}$,
\begin{equation}\label{eqn:Hdef}
    H(\xi, \lambda) := \mathfrak W\bfrac{X \sqrt{\lambda}}{\xi} \hat{\Phi}\bfrac{\log \lambda+ \log P}{\log Q} V_0(\lambda),
\end{equation}
where we have introduced a smooth function $V_0$ which is compactly supported on the positive real line satisfying $V_0(\lambda) = 1$ when $\lambda \in [1/2, 3]$.  Note that this implies that $V\bfrac{p}{P} V_0\bfrac{p}{P} = V\bfrac{p}{P}$ by the support condition on $V$.  We have inserted $V_0$ into the definition of $H$ simply to record the restriction $\lambda \asymp 1$, which is a condition used in Lemma \ref{lem:hatHbdd} below.

Let 
$$\widehat{H}(u, v) = \int_{-\infty}^\infty \int_{-\infty}^\infty H(\xi, \lambda) \e{-\xi u - \lambda v} d\xi d\lambda$$ be the usual Fourier transform of $H$.  We record the following bounds on $\widehat H$ which is essentially Lemma 6.1 of \cite{BCL}.  Our function $H$ is slightly different, but the proof is similar.

\begin{lem}\label{lem:hatHbdd}
With notation as above, we have for $\lambda\asymp 1$ that for any $A_1, A_2 \ge 0$, 
\begin{equation*}
\widehat{H}(u, v) \ll_{A_1, A_2} \frac{X}{(1 + |u|X)^{A_1}} \frac{1}{(1+|v|)^{A_2}}.
\end{equation*}
\end{lem}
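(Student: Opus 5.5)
The bound follows from repeated integration by parts, applied separately in each variable, once we record the derivative bounds of $H$. Write $H(\xi,\lambda) = \mathfrak W(X\sqrt{\lambda}/\xi)\, g(\lambda)$, where
\[
g(\lambda) = \widehat\Phi\bigl((\log\lambda+\log P)/\log Q\bigr)V_0(\lambda).
\]
Since $V_0$ is compactly supported in $(0,\infty)$, we have $\lambda\asymp 1$ on the support. Since $\mathfrak W$ is supported on some $[a,b]\subset(0,\infty)$, the support in $\xi$ is $\xi \asymp X\sqrt\lambda \asymp X$. So $H$ is supported on a box of size $\asymp X$ in $\xi$ and $\asymp 1$ in $\lambda$. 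The trivial bound $|\widehat H(u,v)|\le \iint |H| \ll X$ then gives the claim when $|u|X\le 1$ and $|v|\le 1$.

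For the derivative bounds, note first that $\partial_\lambda^j$ of $\widehat\Phi\bigl((\log\lambda+\log P)/\log Q\bigr)$ is $\ll_j 1$ uniformly in $P,Q$. Each $\lambda$-derivative produces factors $(\log Q)^{-1}\lambda^{-1}$ times derivatives of $\widehat\Phi$, all bounded since $\lambda\asymp1$. Derivatives of $V_0$ are $O_j(1)$.

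For the $\xi$-dependence, set $w(\xi,\lambda)=X\sqrt\lambda/\xi\asymp 1$. Then $\partial_\xi^i w \ll_i X\xi^{-1-i}\ll_i X^{-i}$ on the support, and $\partial_\lambda^j\partial_\xi^i w \ll X^{-i}$ as well. By the chain rule (Fa\`a di Bruno), $\partial_\xi^{i}\partial_\lambda^{j}\mathfrak W(w)\ll_{i,j} X^{-i}$. Combining,
\[
\partial_\xi^{i}\partial_\lambda^{j} H(\xi,\lambda)\ll_{i,j} X^{-i}
\]
on a support of measure $\ll X$.

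Integrating by parts $A_1$ times in $\xi$ and $A_2$ times in $\lambda$ (there are no boundary terms, by compact support) gives
\[
\widehat H(u,v)=\frac{1}{(2\pi i u)^{A_1}(2\pi i v)^{A_2}}\iint \partial_\xi^{A_1}\partial_\lambda^{A_2}H(\xi,\lambda)\,\e{-\xi u-\lambda v}\,d\xi\, d\lambda .
\]
This is bounded by $\ll |u|^{-A_1}|v|^{-A_2}X^{-A_1}\cdot X$. Taking the minimum of this with the trivial bound (using $A_1=0$ or $A_2=0$ as appropriate) yields
\[
\widehat H(u,v) \ll \frac{X}{(1+|u|X)^{A_1}(1+|v|)^{A_2}},
\]
as claimed.

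The only step needing care is the uniformity of the mixed derivative bounds of $\mathfrak W(X\sqrt\lambda/\xi)$: we need each $\xi$-derivative to cost exactly $X^{-1}$ and $\lambda$-derivatives to cost $O(1)$, independently of $X$ (which may be large or small). This holds because $\xi\asymp X$ on the support, so $\xi^{-1}\asymp X^{-1}$ uniformly. The implied constants depend only on $\mathfrak W$, $V_0$, $\widehat\Phi$ and $A_1,A_2$, not on $P,Q,L,m,c$.
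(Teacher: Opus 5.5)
Your proof is correct and follows essentially the same route as the argument the paper relies on (it gives no proof itself, deferring to Lemma 6.1 of \cite{BCL}): integrate by parts in each variable, using that $H$ is supported on $\xi\asymp X$, $\lambda\asymp 1$ with $\partial_\xi^i\partial_\lambda^j H\ll_{i,j} X^{-i}$, a bound one can also obtain by rescaling $\xi\mapsto X\xi$. For non-integer $A_1,A_2$, integrate by parts $\lceil A_1\rceil$ and $\lceil A_2\rceil$ times.
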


Note that $H(\xi, \frac pP)$ is compactly supported in $\xi$ by the support condition on $\mathfrak W$.  Let us say it is supported in $(a_1, b_1)$.  By \eqref{eqn:fdef} and \eqref{eqn:Hdef}, we have that
\begin{align*}
f(\xi) &= H\left(\xi, \frac pP\right) J_{k-1}(\xi) \\\
&= H\left(\xi, \frac pP\right) J_{k-1}(\xi) \, W\left( \frac{\xi}{X} \right) \\
&=  \int_{-\infty}^\infty  \int_{-\infty}^\infty  \widehat{H}(u, v) e(u\xi) e\left(\frac{p}{P} v\right) \>du \>dv  \> J_{k-1}(\xi) \,W\left( \frac{\xi}{X} \right), 
\end{align*}
where $W$ is a smooth function that is compactly supported in $(a, b), $ $0<a<a_1$, $b_1 < b$, such that $W(\xi) = 1$ when $\xi \in [a_1, b_1]$, and the last line follows by Fourier inversion. Hence,

\begin{align*}
    \mathfrak S =  \frac{1}{Q\ell La^2} \sumd_P \sum_{c\ge 1}  \int_{-\infty}^\infty  \int_{-\infty}^\infty \widehat H(u, v) \sum_{p} \frac{ \log p}{\sqrt{p}} V\bfrac{p}{P}  e\left(\frac{p}{P} v\right) \mathcal\cS(c, p ; u) \>du \>dv ,
\end{align*}
where 
\begin{align*}
    \cS(c, p ; u) = \sum_{M}  \frac{S(m^2, p; c\ell a^2M)}{c\ell a^2 M} h_u\bfrac{4\pi \sqrt{pm^2}}{c\ell a^2M}
\end{align*}
with 
\begin{align*}
    h_u(\xi) = J_{k-1}(\xi) e(u\xi) W\bfrac{\xi}{X}.
\end{align*}

We apply Kuznetsov's formula from Lemma~\ref{lem:kuznetsov} to  $\mathcal S(u, p)$ and obtain that
\es{ \label{eqn:sumoverpseparateintoDisCtnHol}
\cS(c, p ; u) = \textup{Dis}(c,p; u) + \Ctn(c,p; u) + \Hol(c,p; u);
}
where

\begin{align*}
\textup{Dis}(c,p; u) &:=  \sum_{j=1}^{\infty} \frac{\overline{\rho_j}(m^2)\rho_j(p) \sqrt{pm^2} }{\cosh(\pi \kappa_j)} h_+(\kappa_j), \\
\Ctn(c,p; u) &:= \frac{1}{\pi} \sum_{\mathfrak{c}} \int_{-\infty}^{\infty}  \frac{\sqrt{pm^2}}{\cosh(\pi t)} \overline{\varphi_{\mathfrak{c}}} (m^2, t)  \varphi_{\mathfrak{c} } (p, t) h_+ (t) \,dt \textup{, \;\;\;and}\\
\Hol(c,p; u) &:= \frac{1}{2\pi} \sum_{ \substack{r\geq 2 \mbox{\scriptsize{ even}} \\ 1 \leq j \leq \theta_{r}(c\ell a^2)} } (r - 1)! \sqrt{pm^2} \, \overline{\psi_{j,r}}(m^2) \psi_{j,r} (p) h_h(r),
\end{align*}where it is implied that $a, L, 
\ell, m$ are fixed. Note that these forms are of level $c\ell a^2.$  We write

\es{\label{def:M_DCH} \mathfrak M_{A} (P) = \sum_{c\geq 1} \int_{-\infty}^\infty \int_{-\infty}^\infty    \widehat{H}(u, v)\sum_{p } \frac{ \log p}{\sqrt{p}} \e{v\frac{p}{P}} V\bfrac{p}{P} A(c, p; u) \> du \> dv, }
where $A(c, p; u)$ is $\textup{Dis}(c,p; u)$, $\Ctn(c, p; u)$, or $\Hol(c, p; u)$. Hence, 
\begin{align}\label{eqn:frakSfinalexpression}
    \mathfrak S = \frac{1}{Q\ell La^2} \sumd_P  \left(\mathfrak M_{\textup{Dis}} (P) + \mathfrak M_{\Ctn} (P) + \mathfrak M_{\Hol} (P)\right).
\end{align}

The required bound for $\mathfrak S$ will follow from the following propositions.

\begin{prop}\label{prop:DisHol}
With notation as above, we have
\begin{align*}
 \mathfrak M_{\Dis}(P)\ll Q^{\epsilon} \frac{\sqrt P}{Q} \ \ \ \ \textrm{and} \ \ \ \ \ \ \mathfrak M_{\Hol}(P)\ll Q^{\epsilon} \frac{\sqrt P}{Q}.
\end{align*}
\end{prop}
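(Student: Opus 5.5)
We will bound $\mathfrak M_{\Dis}(P)$ and $\mathfrak M_{\Hol}(P)$ in the same way, following \S 6 of \cite{BCL}. The idea is to exchange the order of summation so that the prime sum sits inside the spectral sum. For each form $u_j$ (resp. $\psi_{j,r}$) of level $c\ell a^2$ we would write
\begin{align*}
\sum_p \frac{\log p}{\sqrt p}\, \e{v\tfrac pP} V\bfrac pP \sqrt{p}\,\rho_j(p),
\end{align*}
and similarly for $\psi_{j,r}$. We choose the orthonormal basis to consist of Hecke eigenforms, built from newforms of levels dividing $c\ell a^2$ through the usual Atkin--Lehner basis. Then, for $p\nmid c\ell a^2$, we have $\sqrt p\,\rho_j(p)=\rho_j(1)\lambda_j(p)$ up to the standard normalizing constants coming from oldform lifts. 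The primes $p\mid c\ell a^2$ contribute only $O(\log(c\ell a^2))$ terms, and these are handled trivially.

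Next we apply Lemma~\ref{lem:CLee3.5} under GRH to the smooth weight $\Psi(x)=V(x/P)\e{vx/P}$. Its derivatives satisfy the hypotheses with $A_3\ll (1+|v|)^3$, so the prime sum is $\ll (1+|v|)^3 Q^{\epsilon}\log(c+|\kappa_j|)$, or $\log(c+r)$ in the holomorphic case. The factor $(1+|v|)^3$ is absorbed by the decay of $\widehat H$ in $v$ from Lemma~\ref{lem:hatHbdd}, taking $A_2=5$.

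It then remains to bound
\begin{align*}
\sum_j \frac{|\rho_j(m^2)\rho_j(1)|\, m}{\cosh(\pi\kappa_j)}\,|h_+(\kappa_j)|\log(c+|\kappa_j|)
\end{align*}
together with its holomorphic analogue. For this we use the Cauchy--Schwarz inequality and the large sieve inequalities of Deshouillers--Iwaniec in the spectral aspect, which are valid because $m\ll Q^\epsilon$. We also need the standard estimates for the Bessel transforms $h_+$ and $h_h$ of $h_u(\xi)=J_{k-1}(\xi)e(u\xi)W(\xi/X)$, as in \cite{BCL}. Since $J_{k-1}(\xi)\ll \xi^{k-1}$ for small $\xi$, these transforms are $\ll \min(X,X^{k-1})(1+|u|X)^{O(1)}$ with rapid decay in $\kappa_j$ and $r$ beyond $1+|u|X$. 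The power of $(1+|u|X)$ is absorbed by the factor $(1+|u|X)^{-A_1}$ in $\widehat H$, after integrating in $u$, which costs a factor $X^{-1}$ that is compensated by the $X$ in $\widehat H$.

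Summing over $c$ uses $X\asymp L\sqrt{P}m/(cQ)$. The large-sieve bound over level $c\ell a^2$ gives a factor $\ll (1+|u|X)^{O(1)}Q^\epsilon$ uniformly in the level. The sum $\sum_c \min(X,X^{k-1})$ is then $\ll Q^\epsilon\sqrt P/Q$: for $c\le \sqrt P/Q$ we get $\sum_c X\ll (\sqrt P/Q)\log$, and the tail $c>\sqrt P/Q$ converges since $k\ge4$. Combining these gives $\mathfrak M_{\Dis}(P),\mathfrak M_{\Hol}(P)\ll Q^\epsilon\sqrt P/Q$.

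The main obstacle is justifying the passage from $\rho_j(p)$ to Hecke eigenvalues uniformly over oldforms of level $c\ell a^2$. We also need to control $\sum_j|\rho_j(m^2)\rho_j(1)|/\cosh(\pi\kappa_j)$ with only a $Q^\epsilon$ loss, uniformly in $c$, and to handle possible exceptional eigenvalues $\kappa_j$ in the Maass case. To keep this within $Q^\epsilon$ we would follow the treatment in \cite{BCL} closely, using its Lemma on Fourier coefficients of oldforms and the bound $|h_+(\kappa_j)|\ll X^{1-2|\tIm \kappa_j|}$ together with Kim--Sarnak to control the exceptional spectrum.
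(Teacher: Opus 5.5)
Your outline follows the argument of \cite[Proposition~6.2]{BCL}, which is the only proof the paper gives for this statement. The steps are the same: pass to a Hecke basis so that $\sqrt p\,\rho_j(p)=\rho_j(1)\lambda_j(p)$ for $p\nmid c\ell a^2$, bound the prime sum via Lemma~\ref{lem:CLee3.5}, and control the rest with Cauchy--Schwarz, the Deshouillers--Iwaniec large sieve and bounds for $h_+$, $h_h$ before summing over $c$. The Hecke relation holds exactly on each old space, since $T_p$ with $p\nmid c\ell a^2$ acts there by $\lambda_f(p)$. So there are no normalizing constants, and this is not really an obstacle.

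Two of the auxiliary estimates you quote are wrong as stated, though the conclusion survives once they are corrected.

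First, for $X\ge 1$ the transforms are not negligible beyond $1+|u|X$. Already at $u=0$, $h_+(\kappa)$ and $h_h(r)$ are of size about $X^{-1}$ for $\kappa, r$ up to about $\sqrt{X}$. The effective range is roughly $\sqrt{X}(1+|u|X)$. What saves you is that their size there is $\ll X^{-1}$, not $X$. The large sieve then gives $O(Q^{\epsilon}(1+|u|X)^{O(1)})$ per $c$, and summing over $c\ll Lm\sqrt P/Q$ produces the claimed $Q^\epsilon\sqrt P/Q$. Your pairing of size $X$ with range $1+|u|X$ reaches the same total only because the two errors compensate. Your size bound combined with the true range would give about $X^2$ per $c$, hence $\mathfrak M_{\Dis}(P)\approx P/Q^2$, which is too weak for $P$ near $Q^{4-\delta}$.

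Second, for exceptional $\kappa_j=it$ and $X\le 1$ you must keep the factor coming from $J_{k-1}$, namely $h_+(it)\ll X^{k-1-2t}$. The bound $X^{1-2t}$ you cite would make the sum over large $c$ diverge, whereas $k-1-2t>1$ for $k\ge 4$ makes it converge to $O(Q^\epsilon\sqrt P/Q)$.
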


\begin{prop}\label{prop:Ctn}
With notation as above, we have
\begin{align*}
\mathfrak M_{\Ctn}(P) \ll Q^{\epsilon} \left(P^{\frac 14 + \epsilon} + \frac{\sqrt P}{Q}\right).
\end{align*}
\end{prop}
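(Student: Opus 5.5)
The plan is to follow the treatment of the continuous spectrum in \cite{BCL}, making the Eisenstein coefficients at level $c\ell a^2$ explicit so that the sum over primes can be done first, with $t$, $u$, $v$ and $c$ held fixed. For $\Gamma_0(N)$ with $N=c\ell a^2$ one can use an adelic, or Dirichlet-character, basis of Eisenstein series indexed by pairs of characters $\chi_1,\chi_2$ with $\chi_1\chi_2$ principal and conductors dividing $N$. For $(n,N)=1$ the $n$-th coefficient of such a series is, up to normalisation,
\[
\varphi(n,t)\approx \frac{\tau_{\chi_1,\chi_2}(n,t)}{\sqrt n\,L^{(N)}(1+2it,\chi_1\overline{\chi_2})},\qquad \tau_{\chi_1,\chi_2}(n,t)=\sum_{ab=n}\chi_1(a)\chi_2(b)(a/b)^{it}.
\]
For a prime $p\nmid N$ this gives $\sqrt p\,\varphi(p,t)\approx \chi_1(p)p^{it}+\chi_2(p)p^{-it}$, up to that normalising factor. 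Primes $p\mid c\ell a^2$ will be removed at a cost of $O(Q^\epsilon)$, using the crude coefficient bound together with the fact that there are at most $O(\log Q)$ such primes. The factor $\sqrt{m^2}\,\overline{\varphi}(m^2,t)$ is bounded by $Q^{\epsilon}(1+|t|)^{\epsilon}$. This uses $m\le Q^\epsilon$ and the standard lower bound $|L(1+2it,\chi)|^{-1}\ll \log(N(1+|t|))$ under GRH. Summing over the cusps and character pairs costs only $N^{\epsilon}$ per unit of $c$ once we weight by $\sum_{\mathfrak c}$ with the usual normalisation.

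The core step is the prime sum
\[
\sum_p \frac{\log p}{p^{1/2\mp it}}\,\chi(p)\,V\bigl(p/P\bigr)\,e\bigl(vp/P\bigr).
\]
When $\chi$ is non-principal, Lemma~\ref{lem:CLee3.5} applies with weight $\Psi(x)=V(x/P)e(vx/P)$. Here $A_3\ll(1+|v|)^3$, which is absorbed by the decay of $\widehat H$ in $v$ from Lemma~\ref{lem:hatHbdd}, and the sum is $\ll Q^{\epsilon}(1+|v|)^3\log(1+|t|)$. When $\chi$ is principal, the prime number theorem under RH replaces the sum by
\[
\int x^{-1/2\pm it}V(x/P)e(vx/P)\,dx+O\bigl(P^{\epsilon}(1+|t|+|v|)^{A}\bigr).
\]
The integral equals $P^{1/2\pm it}$ times a Fourier-type transform that decays rapidly once $|t|\gg 1+|v|$. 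I would then insert the bounds for the Bessel transform $h_+(t)$ of $h_u$, using the versions in \cite{BCL}. Since $h_u$ localises $\xi\asymp X$ and oscillates like $e(u\xi)$, we get $h_+(t)\ll \min(X^{k-1},X^{-1/2})(1+|t|)^{-A}$ away from the transition range $|t|\asymp X(1+|u|)$, with a square-root-type bound there. Lemma~\ref{lem:hatHbdd} truncates $|u|\ll Q^{\epsilon}/X$ and $|v|\ll Q^\epsilon$.

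Assembling the pieces, the error and non-principal terms contribute $Q^\epsilon\sum_c\int|h_+(t)|(1+|t|)^{O(1)}dt$. Because $X\propto 1/c$, the sum over $c$ converges, giving an acceptable bound; the $\sqrt P/Q$ term in Proposition~\ref{prop:Ctn} should come from the range $c\gg\sqrt P/Q$, where $X$ is small and $h_+$ is tiny. The expected main obstacle is the principal-character main term, of size $\sqrt P\int|h_+(t)|\,|\widehat V(\cdot)|\,dt$ with $|t|\ll 1+|v|$. Here I need to exploit that $h_+(t)$ for bounded $t$ and large $X$ is only of size $X^{-1/2}\asymp (cQ/\sqrt P)^{1/2}$. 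I would try to combine this with the oscillation in $p^{\pm it}$ against the $\overline{\varphi}(m^2,t)$ factor, via integration by parts in $t$ and the smallness of the transition region, so as to save a factor $P^{1/4}$ and reach $P^{1/4+\epsilon}$. If that fails, the fallback is to bound this term directly using the explicit asymptotics of $J_{2it}$ from Lemma~\ref{jbessel}-type expansions, as in \cite{BCL}.
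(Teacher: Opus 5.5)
The paper gives no argument of its own here; it quotes \cite[Proposition 6.3]{BCL}. Measured against what that bound requires, your outline has a genuine gap at the step you yourself call the main obstacle, the principal (trivial-character) Eisenstein term, and neither of your proposed remedies can close it.

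Take $c\ll \sqrt P/Q$, where $X\gg 1$. The product $J_{2it}(\xi)J_{k-1}(\xi)$ has a non-oscillating part of exact order $\xi^{-1}$. So for bounded $t$ and $|u|\ll 1/X$ one has $|h_+(t)|\asymp X^{-1}$. The trivial bound already gives $X^{-1}$, so your $X^{-1/2}$ is a weaker estimate, not a saving.

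The total Eisenstein weight at level $c\ell a^2$ is of order $(c\ell a^2)^{-1}$, and the principal prime sum has size $\sqrt P$. Hence each such $c$ contributes about $Q$, up to $Q^{\pm\epsilon}$, and summing over $c\ll\sqrt P/Q$ gives about $\sqrt P$. The integrand genuinely has this size, so no absolute-value estimate can reach $P^{1/4}$; in particular your fallback via Bessel asymptotics cannot. The saving must come from the oscillation of $p^{\pm it}$ in $t$.

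Integration by parts in $t$ gains only $(\log P)^{-1}$ per step. The amplitude varies on scale $1$ in $t$: this amplitude is $h_+(t)$, $1/\zeta^{(N)}(1\pm 2it)$ with $N=c\ell a^2$, and the transform of $V(x)e(vx)$. So no finite number of steps, and no appeal to the size of the transition region, yields a power of $P$.

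The missing idea is analytic continuation in $t$, which RH makes possible. First write the spectral weight holomorphically. Replace $\overline{\varphi(m^2,t)}$ by $\varphi(m^2,-t)$, which equals it on the real line (with characters conjugated). Then use $\Gamma(\frac12+it)\Gamma(\frac12-it)\cosh(\pi t)=\pi$. For the principal pair, the $t$-integrand becomes, up to a normalising factor $\ll N^{-1+\epsilon}$ that is holomorphic in the strip,
\[
\frac{(p^{it}+p^{-it})\,\tau_{it}(m^2)}{\zeta^{(N)}(1+2it)\,\zeta^{(N)}(1-2it)}\,h_+(t).
\]
Here $h_+$ continues holomorphically. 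On RH, $1/\zeta(1+2it)$ is holomorphic with $(|t|+2)^{\epsilon}$ growth as long as $\tRe(1+2it)>\frac12$, i.e.\ $\tIm t<\frac14$, and symmetrically for $1/\zeta(1-2it)$.

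Move the $p^{it}$ part to $\tIm t=\frac14-\epsilon$ and the $p^{-it}$ part to $\tIm t=-\frac14+\epsilon$. Do this \emph{before} summing over $p$; the prime sum is entire in $t$. On the shifted lines $|p^{\pm it}|=p^{-1/4+\epsilon}$, so the prime sum is trivially $\ll P^{1/4+\epsilon}$, with no asymptotic evaluation of it needed. The remaining sum over $c$ and integrals over $\tau$, $u$, $v$ cost only $Q^{\epsilon}$, just as in your treatment of the non-principal pairs. This yields $Q^{\epsilon}P^{1/4+\epsilon}$.

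The exponent $\frac14$ in the proposition is exactly the width of the strip that RH grants to $1/\zeta(1+2it)$. It is also what makes $\frac1Q\sum_{P\le Q^{4-\delta}}P^{1/4}$ small. Your handling of the non-principal pairs via Lemma~\ref{lem:CLee3.5} can stay, or can be replaced by the same shift under GRH for $L(s,\chi_1\overline{\chi_2})$.
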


Proposition \ref{prop:DisHol} is \cite[Proposition 6.2]{BCL} and Proposition \ref{prop:Ctn} is \cite[Proposition 6.3]{BCL}, and we refer the reader to \cite{BCL} for the proofs.

We now directly verify \eqref{eqn:frakSbdd} using Propositions \ref{prop:DisHol} and \ref{prop:Ctn}.  Indeed, plugging in the bounds from Propositions \ref{prop:DisHol} and \ref{prop:Ctn} into \eqref{eqn:frakSfinalexpression} we have

\begin{align*}
    \mathfrak S 
    &\ll  \frac{Q^{\epsilon}}{Q\ell La^2} \sumd_P \left(  \frac{\sqrt P}{Q} +P^{\frac 14 + \epsilon}\right) 
    \ll  \frac{Q^{\epsilon}}{Q\ell La^2} \left(  Q^{1- \delta/2 } + Q^{1 - \delta/4}\right) 
     \ll Q^{- \delta/4 + \epsilon},
\end{align*}
where we have used that $P \le Q^{4 - \delta}$, and this suffices for \eqref{eqn:frakSbdd}.

\section{Setup for computing $\Sigma_{2,'}^{\pm}$}

By \eqref{def:Sigmai'}, we have
\est{
 \Sigma_{2, '}^{\pm} 
&= \frac{(k-1)i^{k}}{12N^{\pm}(Q)\log Q} \sum_{\substack{L, M \\ (L, M) = 1 \\ L \leq Q^{\epsilon}}} \mu^2(L) \mu(M) M \sqrt{LM} \Psi\left( \frac{LM}{Q}\right)    \\
&\qquad \times \sum_{p\nmid LM} \frac{ \log p}{\sqrt{p}} \widehat \Phi\left(\frac{\log p}{\log Q} \right) \sum_{\substack{(m, M) = 1 \\ m \leq Q^{\epsilon}}} \frac{\Delta_{M}(m^2,pLM)}{m}. }

Since $p, L, M$ are pairwise relatively prime, $pLM \neq m^2$ for any positive integers $m$. Applying the Petersson trace formula in Lemma \ref{lem:usualPetersson} and the definition of the Kloosterman sum, we obtain that

\est{\Sigma_{2,'}^{\pm} &= \frac{2\pi (k-1)}{12N^{\pm}(Q)\log Q} \sum_{\substack{L, M \\ (L, M) = 1 \\ L \leq Q^{\epsilon}}} \mu^2(L) \mu(M) M \sqrt{LM} \Psi\left( \frac{LM}{Q}\right)    \sum_{p\nmid LM} \frac{ \log p}{\sqrt{p}} \widehat \Phi\left(\frac{\log p}{\log Q} \right) \\
& \qquad \times \sum_{\substack{(m, M) = 1 \\ m \leq Q^{\epsilon}}} \frac{1}{m} \sum_{c} \frac{1}{cM} \sumstar_{b \mod {cM}} \e{\frac{m^2b}{cM}}  \e{\frac{\bar b pL}{c}} J_{k-1}\left( \frac{4\pi m\sqrt{pL}}{c\sqrt M}\right).}

We now add the condition $(p, c) = 1$ to facilitate the later introduction of Dirichlet characters. 

\begin{lem} We have
$$ \Sigma_{2, '}^{\pm} = \Sigma_{3}^{\pm} + O(Q^{-\epsilon}),$$
where $\Sigma_{3}^{\pm}$ is defined analogously to $\Sigma_{2, '}^{\pm}$ with the extra condition that $(p, c) = 1.$
\end{lem}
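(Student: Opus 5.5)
We have to bound the contribution to $\Sigma_{2,'}^{\pm}$ of the terms with $p \mid c$. My plan is to undo the opening of the Kloosterman sum and treat these terms as Petersson-type terms with $p$ dividing the modulus. Write $c = p c'$.

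Since $p \nmid M$, the Kloosterman sum $S(m^2, pLM; cM)$ has modulus $pc'M$, divisible by $p$. Rather than exploit cancellation, I would bound these terms trivially, using the Weil bound
\[
|S(m^2,pLM;cM)| \le \tau(cM)\,(m^2,pLM,cM)^{1/2}(cM)^{1/2}.
\]
Since $(pLM,m^2)$ is essentially 1 (we have $(m,M)=1$, and the factors $p, L$ contribute at most $Q^{\epsilon}$), this gives $|S| \ll (cM)^{1/2+\epsilon}Q^{\epsilon}$. For the Bessel function I would use $J_{k-1}(x) \ll \min(x^{-1/2}, x^{k-1})$ from Lemma~\ref{jbessel}, with argument $x = 4\pi m\sqrt{pL}/(c\sqrt M)$.

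For fixed $L, m \le Q^{\epsilon}$, $M \asymp Q/L$ and prime $p \le Q^{3-\delta}$, the $c$-sum restricted to $p \mid c$ is then bounded by
\[
\sum_{c'} \frac{(pc'M)^{1/2+\epsilon}}{pc'M}\,\min\!\left(x^{-1/2},x^{k-1}\right),
\qquad x = \frac{m\sqrt{L}}{c'\sqrt{pM}}.
\]
Since $p \ge 2$ and $M \asymp Q$, this argument satisfies $x \ll Q^{-1/2+\epsilon}$, so we are always in the small-argument regime $J_{k-1}(x) \ll x^{k-1}$. The $c'$-sum converges because $k \ge 4$, and it is
\[
\ll Q^{\epsilon}\,(pM)^{-1/2}\,(pM)^{-(k-1)/2}.
\]

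Multiplying by the outer weights $M\sqrt{LM}\cdot p^{-1/2}\log p$ and summing over $p$ and $M$ gives
\[
\ll Q^{\epsilon}\sum_{p}\sum_{M\asymp Q/L} M^{3/2}\,p^{-1/2}\,(pM)^{-k/2}
\ll Q^{\epsilon}\,Q^{5/2-k/2}.
\]
Dividing by $N^{\pm}(Q)\log Q \asymp Q^2\log Q$ (Lemma~\ref{lem:asympforNQ}) yields $Q^{1/2-k/2+\epsilon}$, which is at most $Q^{-3/2+\epsilon}$ for $k \ge 4$. This is far smaller than $Q^{-\epsilon}$.

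The only step needing care is the gcd bookkeeping in the Weil bound, together with checking that $x$ is genuinely small. Both are routine here because $p$ divides the modulus, and this forces the Bessel argument to be of size at most $Q^{-1/2}$ even for the largest $p$. I therefore expect no real obstacle: the $p \mid c$ terms are negligible for all $p$ in the range, and removing them leaves exactly $\Sigma_3^{\pm}$ with an error $O(Q^{-\epsilon})$.
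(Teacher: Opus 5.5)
Your proposal is correct and is essentially the paper's own argument: write $c=pc'$, apply Weil's bound to $S(m^2,pLM;pc'M)$ with the gcd contributing only $Q^{\epsilon}$, use $J_{k-1}(x)\ll x^{k-1}$ with $x=m\sqrt{L}/(c'\sqrt{pM})$, and sum the absolutely convergent $c'$- and $p$-sums to get $Q^{1/2-k/2+\epsilon}$, which is exactly the paper's bound $Q^{\epsilon}/Q^{k/2-1/2}$. (One cosmetic point: the Bessel argument is largest for the smallest $p$, not the largest, but your bound $x\ll Q^{-1/2+\epsilon}$ holds uniformly in $p$, so nothing changes.)
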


\begin{proof} To prove the lemma, we need to bound
\est{\mathcal E^{\pm}_{2, p | c}  &:= \frac{2\pi (k-1)}{12N^{\pm}(Q)\log Q} \sum_{\substack{L, M \\ (L, M) = 1 \\ L \leq Q^{\epsilon}}} \mu^2(L) \mu(M) M \sqrt{LM} \Psi\left( \frac{LM}{Q}\right)    \sum_{p\nmid LM} \frac{ \log p}{\sqrt{p}} \widehat \Phi\left(\frac{\log p}{\log Q} \right) \\
& \qquad \times \sum_{\substack{(m, M) = 1 \\ m \leq Q^{\epsilon}}} \frac{1}{m} \sum_{\substack{c \\ p | c}} \frac{S(m^2, pLM, cM)}{cM}  J_{k-1}\left( \frac{4\pi m\sqrt{pL}}{c\sqrt M}\right). }
Weil's bound for the Kloosterman sum gives that
$$ S(m^2, pLM, pcM) \ll (cpM)^{1/2 + \epsilon} m \ll Q^\epsilon (cpM)^{1/2 + \epsilon}.$$
Changing variables from $c$ to $pc$, and putting in the bound for the Bessel function from Lemma \ref{jbessel}, we obtain that

\est{\mathcal E^{\pm}_{2, p | c}  &\ll  \frac{1}{Q^2} \sum_{\substack{L, M \\ (L, M) = 1 \\ L \leq Q^{\epsilon}}} M \sqrt{LM} \Psi\left( \frac{LM}{Q}\right)    \sum_{p\nmid LM} \frac{ \log p}{\sqrt{p}} \left|\widehat \Phi\left(\frac{\log p}{\log Q} \right)\right| \\
&\ \ \ \ \times \sum_{\substack{(m, M) = 1 \\ m \leq Q^{\epsilon}}} \frac{1}{m} \sum_{\substack{c }} (cpM)^{-1/2 + \epsilon}  \left( \frac{m\sqrt{L}}{c\sqrt{pM}}\right)^{k - 1}. }
For $k \geq 4,$ the sums over $c$ and $p$ are absolutely convergent and we have 
\est{ \mathcal E_{2, p | c}^{\pm} &\ll \frac{Q^{\epsilon}}{Q^2}\sum_{\substack{L, M \\ (L, M) = 1 \\ L \leq Q^{\epsilon}}} M \sqrt{L} \Psi\left( \frac{LM}{Q}\right) \sum_{\substack{(m, M) = 1 \\ m \leq Q^{\epsilon}}} \frac{1}{m} \left( \frac{m\sqrt L}{\sqrt M}\right)^{k - 1}  \\
&\ll \frac{Q^{\epsilon k}}{Q^2} \frac{1}{Q^{k/2 - 5/2 }} \ll \frac{Q^{\epsilon}}{Q^{k/2 - 1/2}} \ll Q^{-\epsilon}.}
This proves the lemma.
\end{proof}
Next we remove the condition $p \nmid M$, decoupling the sum over $p$ and the sum over $M$. This will be convenient when bounding the sums over $p$ and $m$. 

\begin{lem} We have
$$ \Sigma_{3}^{\pm} = \Sigma_{4}^{\pm} + O(Q^{-3/2+\epsilon}), $$
    where $\Sigma^{\pm}_4$ is defined analogously to $\Sigma_{3}^{\pm}$ with the condition $p \nmid Lc$ instead of $p \nmid LMc.$
\end{lem}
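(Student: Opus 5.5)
The plan is to bound the difference $\Sigma_3^{\pm}-\Sigma_4^{\pm}$ trivially. This difference consists exactly of the terms with $p\mid M$ (and $p\nmid Lc$). Since $M$ is square-free, I write $M=pM'$ with $(M',p)=1$, so that the difference becomes
\[
\frac{2\pi(k-1)}{12N^{\pm}(Q)\log Q}\sum_{\substack{L\le Q^{\epsilon}}}\mu^2(L)\sum_{p\nmid L}\frac{\log p}{\sqrt p}\widehat\Phi\Big(\frac{\log p}{\log Q}\Big)\sum_{M'}\mu(pM')\,pM'\sqrt{LpM'}\,\Psi\Big(\frac{LpM'}{Q}\Big)\sum_{\substack{(m,pM')=1\\ m\le Q^{\epsilon}}}\frac1m\sum_{(c,p)=1}\frac{S(m^2,pLM;cM)}{cM}J_{k-1}\Big(\frac{4\pi m\sqrt{pL}}{c\sqrt M}\Big).
\]
By Lemma~\ref{lem:asympforNQ}, $N^{\pm}(Q)\asymp Q^2$. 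The key structural feature is that $p\le M\ll Q$ throughout this range. Consequently the Bessel argument $x=4\pi m\sqrt{pL}/(c\sqrt M)$ satisfies $x\ll Q^{\epsilon}/c$, so that Lemma~\ref{jbessel} gives $J_{k-1}(x)\ll x^{k-1}\ll Q^{\epsilon}(p/M)^{(k-1)/2}c^{-(k-1)}$. The sum over $c$ therefore converges absolutely for $k\ge4$.

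Next I would exploit that the second argument $pLM$ of the Kloosterman sum is divisible by $M$. When $(c,M)=1$, I factor $S(m^2,pLM;cM)$ by the Chinese remainder theorem. The $M$-part is a Ramanujan sum $c_M(m^2\bar c^{\,2}\cdots)$, which equals $\mu(M)$ because $(m,M)=1$, so it is $O(1)$ rather than of size $M^{1/2}$. The $c$-part is bounded by Weil's bound, giving $\ll c^{1/2+\epsilon}Q^{\epsilon}$. When $(c,M)=d>1$, I would pull out $d$ from $c$ (note $d\mid M'$ since $p\nmid c$). The extra factor $d^{-(k-1)}$ coming from the Bessel bound more than compensates for the loss in the Kloosterman sum, at least for $k\ge4$. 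Combining these estimates, each fixed $(L,m,p,M')$ contributes $\ll Q^{\epsilon}M^{-1}(p/M)^{(k-1)/2}$ after the $c$-sum.

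Inserting the weights $M\sqrt{LM}\,p^{-1/2}$ then reduces the problem to estimating
\[
\frac{Q^{\epsilon}}{Q^2}\sum_{p\ll Q}p^{-1/2}\sum_{M'\asymp Q/p}M^{1/2}\Big(\frac{p}{M}\Big)^{(k-1)/2}.
\]
Here the factor $(p/M)^{(k-1)/2}=M'^{-(k-1)/2}$ forces $M'$ to be small. I would sum over $M'$ first, using this decay, and then sum over $p\asymp Q/M'$ by the prime number theorem. I would aim to extract from this the stated saving $Q^{-3/2+\epsilon}$.

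The main obstacle is the final bookkeeping of the powers of $Q$. A crude treatment, using only Weil's bound at modulus $cM$ and counting all $M\sim Q$ with $p\mid M$, loses a factor $M^{1/2}$ and yields only $Q^{-1/2+\epsilon}$, which falls short of the required $Q^{-3/2+\epsilon}$. The Ramanujan-sum evaluation of the $M$-part is what recovers the loss. I expect to have to be careful with the regime $M'$ bounded, $p\asymp Q$, which dominates the sum. If the above count still falls short there, the first thing I would try is to use the exact vanishing or sign of the Ramanujan sum $c_M$ together with the factor $\mu(M)=\mu(p)\mu(M')$. I would also retain the full Taylor expansion \eqref{jbessel:smallx} of $J_{k-1}$, in order to gain the additional power of $M'$ or $c$ needed in that regime.
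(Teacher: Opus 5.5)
\textbf{Gap: the $Q^{-3/2+\epsilon}$ bound is not reached, and cannot be.} Your own estimates give $O(Q^{-1+\epsilon})$. Write $M=pM'$. The weight $M\sqrt{LM}\,p^{-1/2}$ times your per-term bound $M^{-1}(p/M)^{(k-1)/2}$ equals $\sqrt L\,M'^{1-k/2}$, which is independent of $p$. There are $\asymp Q/(LM'\log Q)$ primes with $LpM'\asymp Q$, so the total is $\ll Q^{-2+\epsilon}\cdot Q\sum_{M'}M'^{-k/2}\ll Q^{-1+\epsilon}$.

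The remedies in your last paragraph cannot recover the missing $Q^{-1/2}$. By twisted multiplicativity, for $(c,p)=1$ and $(m,p)=1$,
\[
S(m^2,p^2LM';cpM')=c_p\big(\overline{cM'}\,m^2\big)\,S(m^2,LM';cM')=-S(m^2,LM';cM'),
\]
where $c_p$ is the Ramanujan sum. The Bessel argument $4\pi m\sqrt L/(c\sqrt{M'})$ does not involve $p$. Hence the $(c,p)=1$ part of the Kloosterman series of $\Delta_{pM'}(m^2,p^2LM')$ equals $-p^{-1}\big(\Delta_{M'}(m^2,LM')-\delta(m^2,LM')\big)$, up to a negligible tail from $p\mid c$.

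This has three consequences. First, there is no sign to exploit: $\mu(M)c_M(\bar c m^2)=\mu(M)^2=1$. Second, there is no oscillation in $p$: the summand depends on $p$ only through $\log p\,\Psi(LpM'/Q)\widehat\Phi(\log p/\log Q)$, and that smooth sum is $\asymp Q/(LM')$. Third, the Taylor series of $J_{k-1}$ gains nothing when $c=M'=1$.

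Concretely, take $k=4$. There $S_4(1)=0$, so $\Delta_1\equiv 0$. The single term $L=m=M'=1$ then contributes
\[
-\frac{1+o(1)}{4N^{\pm}(Q)\log Q}\sum_p\log p\,\Psi\Big(\frac pQ\Big)\widehat\Phi\Big(\frac{\log p}{\log Q}\Big)
\]
to $\Sigma_4^{\pm}-\Sigma_3^{\pm}$. This has exact order $1/(Q\log Q)$ whenever $\widehat\Phi(1)\widetilde\Psi(1)\neq 0$. So individual terms have size $Q^{-1}$, and the lemma as stated is presumably false.

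\textbf{The paper's proof has the same problem.} After $M\to pM$ it applies Weil's bound at the full modulus $cpM$. It then bounds $\sum_p p^{1/2}\Psi(pLM/Q)$ by $(Q/M)^{1/2}$, but this sum is actually $\asymp (Q/LM)^{3/2}/\log Q$. Corrected, that argument gives only $O(Q^{-1/2+\epsilon})$, which is exactly the ``crude'' bound you anticipated.

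Your Ramanujan-sum factorization improves this to $O(Q^{-1+\epsilon})$. By the computation above, that is the true order up to $Q^{\epsilon}$. Proposition~\ref{prop:boundSigma_2} only needs $o(1/\log Q)$. So the right repair is to finish your count at $O(Q^{-1+\epsilon})$ and state the lemma with that error term, rather than look for the extra $Q^{-1/2}$.

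Incidentally, you need no separate argument for $(c,M')>1$. In that case $S(m^2,LM';cM')=0$, because $S(x,y;\ell^{b})=0$ whenever $b\ge 2$, $\ell\mid y$ and $\ell\nmid x$.
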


\begin{proof} To prove the lemma, we need to bound
\est{\mathcal E^{\pm}_{3, p | M}  &= \frac{2\pi (k-1)}{12N^{\pm}(Q)\log Q} \sum_{\substack{L, M \\ (L, M) = 1 \\ L \leq Q^{\epsilon}}} \mu^2(L) \mu(M) M \sqrt{LM} \Psi\left( \frac{LM}{Q}\right)    \sum_{\substack{p\nmid L \\ p | M}} \frac{ \log p}{\sqrt{p}} \widehat \Phi\left(\frac{\log p}{\log Q} \right) \\
& \qquad \times \sum_{\substack{(m, M) = 1 \\ m \leq Q^{\epsilon}}} \frac{1}{m} \sum_{\substack{(c, p) = 1}} \frac{S(m^2, pLM, cM)}{cM}  J_{k-1}\left( \frac{4\pi m\sqrt{pL}}{c\sqrt M}\right). }
    We change the variable from $M$ to $pM$, and so $(p, m) = 1$. From Weil's bound for the Kloosterman sum, we have
    \est{ S(m^2, p^2LM, cpM) &\ll (cpM)^{1/2 + \epsilon} \sqrt{(m^2, p^2LM, cpM)} \\ 
    &\ll (cpM)^{1/2+\epsilon} m \ll (cpM)^{1/2 + \epsilon}Q^\epsilon, } for $m\le Q^\epsilon$.
    This, and Lemma \ref{jbessel} gives us that
    \est{\mathcal E^{\pm}_{3, p | M}  &\ll  \frac{Q^\epsilon}{Q^2} \sum_{\substack{L, M \\ L \leq Q^{\epsilon}}} M \sqrt{LM}    \sum_{p\nmid L} p \log p \Psi\left( \frac{pLM}{Q}\right) \left|\widehat \Phi\left(\frac{\log p}{\log Q} \right)\right| \\
& \qquad \times \sum_{\substack{(m, pM) = 1 \\ m \leq Q^{\epsilon}}} \frac{1}{m} \sum_{\substack{(c, p) = 1 }} (cpM)^{-1/2 + \epsilon} \min \left\{ \left( \frac{m\sqrt{L}}{c\sqrt{M}}\right)^{-1/2}, \left( \frac{m\sqrt{L}}{c\sqrt{M}}\right)^{k - 1} \right\} \\
&\ll \frac{Q^\epsilon}{Q^2} \sum_{\substack{L, M \\ L \leq Q^{\epsilon}}} M^{1-(k-1)/2}    \sum_{p\nmid L} p^{1/2} \Psi\left( \frac{pLM}{Q}\right) \sum_{c\ge 1} c^{-1/2 + \epsilon} \left( \frac{1}{c}\right)^{k - 1}\\
&\ll \frac{Q^\epsilon}{Q^2} \sum_{M\ll Q} M^{1-(k-1)/2}    \bfrac{Q}{M}^{1/2} \\
&\ll Q^{-3/2+\epsilon},}
for $k \ge 4$, as desired.
\end{proof}

Now, we write $e\bfrac{\bar b p L}{c} = e\bfrac{\bar b d L}{c}$ for $p \equiv d \bmod c$, and express the latter congruence using Dirichlet characters. Specifically,
\es{ \label{eqn:separatetotrivandnonchar}\Sigma_{4}^{\pm} &= \frac{2\pi (k-1)}{12N^{\pm}(Q)\log Q} \sum_{\substack{L, M \\ (L, M) = 1 \\ L \leq Q^{\epsilon}}} \mu^2(L) \mu(M) M \sqrt{LM} \Psi\left( \frac{LM}{Q}\right)  \sum_{\substack{(m, M) = 1 \\ m \leq Q^{\epsilon}}} \frac{1}{m}   \\
&\qquad \times  \sum_{c} \frac{1}{cM} \sumstar_{b \mod {cM}} \sumstar_{d\mod c} \e{\frac{m^2b}{cM}}  \e{\frac{\bar b dL}{c}} \frac{1}{\phi(c)} \sum_{\chi \mod c}\overline{\chi(d)} \\
& \qquad \times
\sum_{p\nmid L} \frac{ \log p}{\sqrt{p}} \chi(p)\widehat \Phi\left(\frac{\log p}{\log Q} \right)J_{k-1}\left( \frac{4\pi m\sqrt{pL}}{c\sqrt M}\right) \\
&=: \Sigma_{triv}^{\pm} + \Sigma_{non}^{\pm},}
where $\Sigma_{triv}^{\pm}$ is the contribution from the trivial character $\chi_0$ mod $c$, and $\Sigma_{non}^{\pm}$ is the contribution from the non-trivial characters.  Proposition \ref{prop:boundSigma_2} immediately follows from the lemmas below. 

\begin{lem} \label{lem:trivchar} Let notation be as above. Assume GRH and let $\Phi$ be an even Schwartz function with $\widehat \Phi$ compactly supported in $(-3, 3).$ Then
\est{\Sigma_{triv}^{\pm} = \frac{1}{2} \int_{|y| > 1} \widehat \Phi(y) \> dy + O\left( \frac{1}{\log Q}\right).}
    
\end{lem}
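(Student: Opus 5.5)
We plan to evaluate $\Sigma_{triv}^{\pm}$ directly; no Kuznetsov machinery should be needed. For $\chi=\chi_0 \bmod c$ the sum over $d$ is the Ramanujan sum $c_c(\bar b L)=c_c(L)=\sum_{g\mid (c,L)} g\,\mu(c/g)$, since $(\bar b,c)=1$. After this the sum over $b \bmod cM$ is also a Ramanujan sum, $c_{cM}(m^2)$. We will use $(m,M)=1$ and the factorization of Ramanujan sums over coprime moduli to extract $\mu(M)$ when $(M,c)=1$. The cases $(M,c)>1$ should contribute only an acceptable error, since they force a common divisor that costs a power of $Q$ via the Bessel bound $J_{k-1}(x)\ll x^{k-1}$, exactly as in the two preceding lemmas. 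Combined with the existing factor $\mu(M)$, this produces $\mu^2(M)$, a nonnegative weight. That weight is what can generate a main term of size $N^{\pm}(Q)\log Q$, after the $1/N^{\pm}(Q)$ normalization.

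\textbf{Step 1 (the sum over $p$).} For fixed $L,M,m,c$ we replace the sum over $p$ by an integral. Assuming GRH (Riemann zeta suffices here), the prime number theorem in the form $\sum_{p\le x}\log p = x + O(x^{1/2+\epsilon})$ and partial summation give
\[
\sum_{p\nmid L}\frac{\log p}{\sqrt p}\,\widehat\Phi\!\left(\frac{\log p}{\log Q}\right)J_{k-1}\!\left(\frac{4\pi m\sqrt{pL}}{c\sqrt M}\right)
=\int_1^\infty \widehat\Phi\!\left(\frac{\log x}{\log Q}\right)J_{k-1}\!\left(\frac{4\pi m\sqrt{xL}}{c\sqrt M}\right)\frac{dx}{\sqrt x}+\mathcal E .
\]
The error $\mathcal E$ is $\ll Q^{\epsilon}$ times a quantity that decays like $(m\sqrt L/(c\sqrt M))^{k-1}$ for large $c$. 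Summed trivially over $c,M,m,L$ (with the prefactor $M^{3/2}/(cM\phi(c))\cdot \sqrt L$ and the normalization $N^{\pm}(Q)\asymp Q^2$), we expect $\mathcal E$ to contribute $O(Q^{-1/2+\epsilon})$, uniformly for $\widehat\Phi$ supported in $(-3,3)$.

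\textbf{Step 2 (the main integral).} We substitute $y=4\pi m\sqrt{xL}/(c\sqrt M)$, so that $dx/\sqrt x = c\sqrt M\, dy/(2\pi m\sqrt L)$ and
\[
\frac{\log x}{\log Q}=\frac{\log M+2\log(cy/(4\pi m\sqrt L))}{\log Q}=1+O\!\left(\frac{\log(cmL\,y)}{\log Q}\right),
\]
using $M\asymp Q/L$. The effective range of $y$ is where $\widehat\Phi$ is nonzero. Rather than Taylor-expand, we plan to insert the Mellin representation \eqref{eqn:ILMellinforJBessel} of $J_{k-1}$ together with the Fourier representation of $\widehat\Phi$. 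This turns the whole expression, summed over $c$, $m$, $M$, $L$, into a contour integral of $Q^{s}$ times a product of explicit Dirichlet series: a $\zeta$-type series in $M$ with $\mu^2(M)$, and Euler products in $c$ (from $c_c(L)c_c(m^2)/(c\,\phi(c))$ times $c^{w}$) and in $m$ (from $m^{-1-w}$). Truncations such as $m\le Q^{\epsilon}$ and $L\le Q^{\epsilon}$ are removed or restored at negligible cost using the decay of $J_{k-1}$.

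\textbf{Step 3 (residue and identification).} We shift contours and pick up the pole coming from the $M$-sum at the point matching $N^{\pm}(Q)\sim \tfrac{k-1}{24}Q^2\widetilde\Psi(2)T(2)$ from Lemma~\ref{lem:asympforNQ}. The arithmetic Euler products in $c,m,L$ should collapse there, through the identity $\sum_c c_c(L)\mu(c)/\phi(c)$-type evaluations, to exactly cancel $T(2)$ and leave the factor $1/2$ relative to $N^{\pm}$. The remaining one-variable integral should be
\[
\int_0^\infty J_{k-1}(y)\,\frac{1}{\log Q}\!\!\int \widehat\Phi\!\left(1+\frac{2\log y+\dots}{\log Q}\right)\cdots ,
\]
which, after using $\int_0^\infty J_{k-1}(y)\,dy=1$ and a change of variables in the $c$-sum whose logarithmic density $\sum_{c\le C}1/c\sim\log C$ spreads the argument of $\widehat\Phi$ over $(1,\infty)$, becomes $\frac12\int_{|y|>1}\widehat\Phi(y)\,dy$ by evenness. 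The shifted contour integrals contribute $O(1/\log Q)$.

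The hardest step will be Step 3: correctly tracking the arithmetic factors (Ramanujan sums, coprimality conditions between $c$ and $M$, the $\mu^2(L)$ and $L$-sum) so that the residue reproduces exactly $\tfrac12 N^{\pm}(Q)\log Q\int_{|y|>1}\widehat\Phi$. In particular, the $c$-sum must furnish the logarithmic integration over $y>1$ and not a constant multiple of $\widehat\Phi(1)$. We will first test the computation against the known fixed-level result of Iwaniec, Luo and Sarnak \cite{ILS}, whose analogous term is the same after averaging over $q$.
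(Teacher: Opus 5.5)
Your outline follows the paper's route: Ramanujan sums for $\chi_0$, RH-based partial summation in $p$, Mellin inversion of $J_{k-1}$ and $\Psi$, the pole of the $M$-series at $s=\tfrac32-\tfrac w2$, then a residue at $w=-1$. However, two of your justifications fail as stated, and the step that produces the main term is only sketched.

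First, the terms with $(c,M)>1$ are not small for the reason you give. A shared prime such as $2$ saves nothing in the Bessel bound, since the argument $4\pi m\sqrt{pL}/(c\sqrt M)$ can still be as large as $Q^{1-\delta/2}$. The analogy with the two preceding lemmas breaks down because there the common factor was the summation prime $p$ itself. These terms in fact vanish identically: if $p_0\mid(c,M)$ then $p_0\nmid m$, so $p_0^2\mid cM/(c,m^2)$ and $c_{cM}(m^2)=0$. In the paper they simply drop out of the Euler product in Lemma~\ref{lem:sumMandsumc}.

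Second, the partial-summation error is not $O(Q^{-1/2+\epsilon})$. That bound covers only the range where the Bessel argument is small. Where it is $\gg 1$, the Bessel function and its derivative are only $\ll x^{-1/2}$. Trivial summation over $t\le Q^{3-\delta}$ (the range allowed by the support of $\widehat\Phi$) then gives $\ll Q^{-3/4+\epsilon}(Q^{3-\delta})^{1/4}=Q^{-\delta/4+\epsilon}$. This is exactly where the hypothesis $\mathrm{supp}\,\widehat\Phi\subset(-3,3)$ enters the lemma; in your account it is never used.

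In Step 3, the ``logarithmic density of the $c$-sum'' must be made precise inside the Mellin framework you set up in Step 2. There it is the pole at $w=-1$ of the factor $\zeta(2+w)$ coming from the $c$-sum. The arithmetic collapse you anticipate happens only at that point, not at the $M$-pole. Indeed $H_p(-1,2;L,m)=0$ for $p\mid mL$, so only $L=m=1$ survives, giving $\frac1{2\pi}\widetilde\Psi(2)T(2)$. The Mellin weight of $J_{k-1}$ at $w=-1$ is your $\int_0^\infty J_{k-1}=1$.

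To obtain $\int_{y>1}\widehat\Phi$, rather than $\int_{\mathbb R}\widehat\Phi$ or a multiple of $\widehat\Phi(1)$, the paper integrates by parts twice in $y$, gaining $4/((w+1)^2\log^2Q)$, and splits the $y$-integral at $y=1$.
\begin{itemize}
\item For $y\le 1$ it moves $w$ only to $\mathrm{Re}\,w=-\tfrac12$, staying to the right of the pole, and gets $O((\log Q)^{-3})$.
\item For $y>1$ it moves $w$ past $-1$ and picks up the triple pole of $\zeta(2+w)/(w+1)^2$. The leading term is proportional to $(y-1)^2\log^2Q$ and yields $\int_1^\infty(y-1)^2\widehat\Phi''(y)\,dy=2\int_1^\infty\widehat\Phi(y)\,dy$.
\end{itemize}
The split is essential: for $y<1$ the factor $Q^{(y-1)w/2}$ grows once $w$ is moved left of $-1$, so the shifted integral would not be small. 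This contour argument is what your Step 3 is missing.
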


\begin{lem} \label{lem:nontrivchar} Let notation be as above. Assume GRH and let $\Phi$ be an even Schwartz function with $\widehat \Phi$ compactly supported in $(-3, 3).$ Then
\est{\Sigma_{non}^{\pm} \ll_{k, \epsilon} Q^{-\epsilon}.}
\end{lem}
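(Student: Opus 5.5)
The plan is to exploit the fact that for nontrivial $\chi \bmod c$ the inner sum over primes shows cancellation under GRH for $L(s,\chi)$. The Bessel weight has to be brought into a form where Lemma~\ref{lem:CLee3.5} applies. Everything else is then bounded trivially, using Weil-type bounds on the complete character sums in $b,d$.

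\textbf{Step 1: smooth and dyadically decompose.} Split the $p$-sum with the partition $V(p/P)$, $P\le Q^{3-\delta}$. Separate the $p$-dependence of $J_{k-1}\bigl(4\pi m\sqrt{pL}/(c\sqrt M)\bigr)$, in one of two ways.
\begin{itemize}
\item In the range $x:=m\sqrt{PL}/(c\sqrt M)\ll 1$, use the power series \eqref{jbessel:smallx}. Each term is a smooth function of $p/P$ times $x^{k-1+2\ell}$.
\item In the range $x\gg1$, use the Mellin representation \eqref{eqn:ILMellinforJBessel}, or the asymptotic \eqref{jbessel:bigx}. This writes the weight as an integral over $t$ of $p^{it}$ against a rapidly decaying kernel, with polynomial losses in $x$.
\end{itemize}
In both cases the prime sum becomes
\[
\sum_{p\nmid L}\frac{\chi(p)\log p}{p^{1/2+it}}\,\widetilde V\!\left(\frac pP\right)\widehat\Phi\!\left(\frac{\log p}{\log Q}\right).
\]
Lemma~\ref{lem:CLee3.5}, together with the remark after it, bounds this by $\ll Q^{\epsilon}(1+|t|)^{\epsilon}\log(c+|t|)$. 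The only loss is a factor $x^{1/2}$ from the Bessel asymptotics when $x$ is large.

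\textbf{Step 2: the character sums.} For fixed $\chi\bmod c$, define
\[
G(\chi):=\frac1{\phi(c)}\sumstar_{b \bmod cM}\sumstar_{d \bmod c}\e{\frac{m^2b}{cM}}\e{\frac{\bar b dL}{c}}\overline{\chi}(d).
\]
The $d$-sum is a Gauss sum $\tau(\overline\chi)\chi(\bar bL)$, so it has size $\le\sqrt c$. The $b$-sum then becomes a twisted Ramanujan-type sum modulo $cM$; splitting via CRT into moduli $c$ and $M$, it is $\ll (m^2,M)\,(\text{something})\cdot c^{1/2+\epsilon}$. Summing over the $\phi(c)$ characters, the sum over nontrivial $\chi$ therefore contributes at most about $c^{1+\epsilon}\cdot\min(1,x^{k-1})\,x^{1/2}$ per $c$, times $Q^{\epsilon}$. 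A cleaner alternative is to bound $\sum_\chi |G(\chi)|$ by Cauchy--Schwarz and orthogonality, which gives $\ll c^{1/2}\cdot(\text{count})$.

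\textbf{Step 3: assemble.} The prefactor is $\frac{M\sqrt{LM}}{N^\pm(Q)\,cM}\asymp Q^{-3/2}c^{-1}$, and there are $\asymp Q$ values of $M$. So each $(c,P)$ contributes roughly $Q^{-1/2}c^{\epsilon}\sqrt c\,\min(1,x^{k-1})\,x^{1/2}$. The sum over $c$ is dominated by $c\approx m\sqrt{PL/M}\approx\sqrt{P/Q}\le Q^{1-\delta/2}$. This gives a total of about $Q^{-1/2}(P/Q)^{1/4}\le Q^{-1/2+(2-\delta)/4}=Q^{-\delta/4}$, which yields $Q^{-\epsilon}$ after adjusting $\epsilon$. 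The hypothesis $P<Q^{3}$ enters exactly here. The small-$c$ range (where $x\gg1$) and the large-$c$ range (where $x^{k-1}$ decays and $k\ge4$ ensures convergence) are handled separately.

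\textbf{Main obstacle.} The key difficulty is obtaining square-root cancellation in the $(b,d)$ character sum uniformly in $\chi$. A trivial bound $c$ instead of $\sqrt c$ would push the threshold below $3$. I would first try evaluating the $b$-sum exactly via CRT, which gives Gauss sums times Ramanujan sums with $\mu(M)$. The bookkeeping for the sum over $M$ must also avoid losing $Q^{\epsilon}$ factors against the thin saving $Q^{-\delta/4}$.
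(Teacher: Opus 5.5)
There is a genuine gap, and it is in Step 3. You bound the sum over $M$ trivially ("there are $\asymp Q$ values of $M$"), and you then never actually sum over $c$.

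Take $L,m\ll Q^{\epsilon}$ and count the contributions:
\begin{enumerate}
\item In the oscillatory range $c\ll\sqrt{P/Q}$ (i.e.\ $x\gg1$), GRH for the $p$-sum gives $O(Q^{\epsilon})$ and nothing better. The amplitude $x^{-1/2}$ of $J_{k-1}$ is exactly offset by the $x^{1/2}$ cost of its oscillation.
\item The characters give $\frac{1}{c\phi(c)}\sum_{\chi}|G_{\overline\chi}(L)G_{\overline\chi}(m^2)|\le 1$.
\item The trivial $M$-sum gives $Q^{-2}\sum_{M\asymp Q}\sqrt M\asymp Q^{-1/2}$.
\end{enumerate}
So every $c\ll\sqrt{P/Q}$ contributes about $Q^{-1/2+\epsilon}$. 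There are $\asymp\sqrt{P/Q}$ such $c$, so the total is about $Q^{-1+\epsilon}\sqrt P$. The range $c\gg\sqrt{P/Q}$ gives the same order.

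This is small only when $P\le Q^{2-\delta}$, which just recovers the Iwaniec--Luo--Sarnak range $(-2,2)$. For $P$ near $Q^{3}$ it is of size $Q^{1/2}$. Your figure $Q^{-1/2}(P/Q)^{1/4}$ is the size of the single term $c\approx\sqrt{P/Q}$, not of the sum over $c$. With your stated per-$c$ bound, which carries an extra $\sqrt c\,x^{1/2}$, the sum over $c$ is even larger.

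The missing idea is a second use of GRH, this time on the $M$-sum. Evaluate the $(b,d)$-sum exactly by CRT (Lemma~\ref{lem:sumseparatecM}). This gives $G_{\overline\chi}(L)G_{\overline\chi}(m^2)\mu(M)\overline\chi(M)$. The factor $\overline\chi(M)$, which you discard when taking absolute values, turns $\sum_{(M,L)=1}\mu^2(M)\overline\chi(M)\sqrt M\,\Psi(LM/Q)(\cdots)$ into a sum twisted by a non-principal character.

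After Mellin inversion of $\Psi$ and of $J_{k-1}$ (or the power series \eqref{jbessel:smallx} when $c$ is large), this sum becomes $L(-\tfrac12+\tfrac w2+s,\overline\chi)H(w,s;\overline\chi)$, which has no pole. Shifting to $\tRe(s)=1+\epsilon_0$ and using the GRH (Lindel\"of) bound gives $(Q/L)^{1+\epsilon_0}$ in place of $(Q/L)^{3/2}$. Together with GRH in $p$, this saving of $Q^{1/2}$ yields $Q^{-3/2+\epsilon}\sqrt P\ll Q^{-\delta/2+\epsilon}$. That is exactly why the threshold is $3$.

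The obstacle you single out is not the real one. For primitive $\chi$ with $(Lm,c)=1$ one has $|G_{\overline\chi}(L)G_{\overline\chi}(m^2)|=c$, so no further per-character saving in the $(b,d)$-sum exists. The paper simply uses Cauchy--Schwarz and orthogonality, $\sum_\chi|G_{\overline\chi}(L)G_{\overline\chi}(m^2)|\le\phi(c)^2$, which is sharp. Your Step 1 treatment of the $p$-sum and the split into small and large $c$ do match the paper.
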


\section{Proof of Lemma \ref{lem:trivchar} - Trivial character contribution}
When $\chi$ is a trivial character, the sums over $d$ and $b$ in \eqref{eqn:separatetotrivandnonchar} are Ramanujan's sums. We therefore have

\est{ \Sigma_{triv}^{\pm} &=
\frac{2\pi (k-1)}{12N^{\pm}(Q)\log Q} \sum_{\substack{L, M \\ (L, M) = 1 \\ L \leq Q^{\epsilon}}} \mu^2(L) \mu(M) M \sqrt{LM} \Psi\left( \frac{LM}{Q}\right)  \sum_{\substack{(m, M) = 1 \\ m \leq Q^{\epsilon}}} \frac{1}{m}   \\
&\qquad \times  \sum_{c} \frac{\phi(cM)}{c M} \frac{\mu\left( \frac{cM}{(c, m^2)} \right)}{\phi\left(\frac{cM}{(c, m^2)}\right)} \frac{\mu\left( \frac{c}{(c,L)}\right)}{\phi\left( \frac{c}{(c,L)}\right)} 
\sum_{p\nmid cL} \frac{ \log p}{\sqrt{p}} \widehat \Phi\left(\frac{\log p}{\log Q} \right)J_{k-1}\left( \frac{4\pi m\sqrt{pL}}{c\sqrt M}\right).}
Assuming RH, by the prime number theorem 
$$\sum_{\substack{p \leq x \\ (p, \ell) = 1}} \log p = \sum_{\substack{p \leq x}} \log p - \sum_{p | \ell } \log p = x + O(x^{1/2}\log^2 x + \log \ell).$$
Hence, partial summation gives that
\es{\label{eqn:sumpfromPNT} \sum_{\substack{p \nmid cL}} &\frac{ \log p}{\sqrt{p}} \widehat \Phi\left(\frac{\log p}{\log Q} \right)J_{k-1}\left( \frac{4\pi m \sqrt{pd}}{c\sqrt M}\right) \\
&= \int_1^{\infty} \frac{1}{\sqrt t} \widehat \Phi\left( \frac{\log t}{\log Q}\right) J_{k-1}\left( \frac{4\pi m \sqrt{tL}}{c\sqrt d}\right) \> dt \\
&+ O\left(  \int_1^{\infty} (\sqrt t (\log t)^2 + \log (cL)) \left|\frac{d}{dt}\left[\frac{1}{\sqrt t} \widehat \Phi\left( \frac{\log t}{\log Q}\right) J_{k-1}\left( \frac{4\pi m \sqrt{tL}}{c\sqrt M}\right)\right]\right| \> dt  \right), }
and we can write
\es{\label{def:Sigmatriv_mainanderrorsump}
\Sigma_{triv}^{\pm} =: \Sigma_{triv, main}^{\pm} + \Sigma_{triv, err}^{\pm},
}
where $\Sigma_{triv, main}^{\pm} $ is the contribution from the main integral in Equation \eqref{eqn:sumpfromPNT}, and similarly $\Sigma_{triv, err}^{\pm}$ is in the contribution in the error term inside the big-$O$ in \eqref{eqn:sumpfromPNT}.

\subsection{Bounding $\Sigma_{triv, err}^{\pm}$}

\begin{lem}  Let $\Phi$ be an even Schwartz function with $\widehat \Phi$ compactly supported in $(-3, 3)$, and recall that the weight $k \geq 4.$ Then
$$ \Sigma_{triv, err}^{\pm} \ll Q^{-\epsilon}.$$
\end{lem}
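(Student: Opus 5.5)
We bound the error term in \eqref{eqn:sumpfromPNT} directly, with everything in absolute values. The plan is to estimate the $t$-integral for fixed $L,M,m,c$, then sum over $c$ using the Ramanujan-sum factors, and finally sum over $M,L,m$ using the normalization $N^{\pm}(Q)\asymp Q^2$ from Lemma~\ref{lem:asympforNQ}.

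\textbf{Step 1: the $t$-integral.} Write $z = 4\pi m\sqrt{tL}/(c\sqrt M)$. The derivative of $t^{-1/2}\widehat\Phi(\log t/\log Q)J_{k-1}(z)$ is a sum of two pieces.
\begin{itemize}
\item The piece where the derivative hits $t^{-1/2}\widehat\Phi$ has size $\ll t^{-3/2}|J_{k-1}(z)|$.
\item The piece where it hits the Bessel function has size $t^{-1/2}\,|J_{k-1}'(z)|\,z/(2t)$.
\end{itemize}
We use $J_{k-1}(x), J_{k-1}'(x) \ll \min\{x^{-1/2}, x^{k-2}\}$ (from Lemma~\ref{jbessel} and the recurrence). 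The support of $\widehat\Phi$ restricts to $t\le Q^{3-\delta}$. The weight $\sqrt t(\log t)^2+\log(cL)$ is dominated by $\sqrt t\,Q^{\epsilon}$. Hence the integrand in the error term is
$$\ll Q^{\epsilon}\, t^{-1}\min\{z^{-1/2},z^{k-2}\}(1+z).$$
Split at the transition point $t_0 = c^2M/(m^2L)$, where $z\asymp 1$.
\begin{itemize}
\item For $t\le t_0$ the bound is $t^{-1}z^{k-2}$. Since $k\ge 4$, this integrates to $O(1)$, and in fact to $\ll \min\{1,(Q^{3}/t_0)^{(k-2)/2}\}$.
\item For $t\ge t_0$ the bound is $t^{-1}z^{1/2}\asymp t^{-3/4}(m^2L/(c^2M))^{1/4}$. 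Integrated up to $Q^{3-\delta}$ this gives $\ll (Q^{3-\delta})^{1/4}(m^2L)^{1/4}(c^2M)^{-1/4}$.
\end{itemize}
So the $t$-error is $\ll Q^{\epsilon}\min\{(Q^{3-\delta}/(c^2M))^{1/4},\,(Q^{3-\delta}/(c^2M))^{(k-2)/2}\}$, using $m,L\le Q^\epsilon$. The $\log(cL)$ part contributes similarly, since $\int t^{-1/2}\cdot(\text{derivative})\,dt$ is even smaller.

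\textbf{Step 2: the sum over $c$.} The arithmetic factor is
$$\frac{\phi(cM)}{cM}\left|\frac{\mu(\cdot)}{\phi(cM/(c,m^2))}\cdot\frac{\mu(\cdot)}{\phi(c/(c,L))}\right| \ll Q^{\epsilon}\,\frac{(c,m^2)(c,L)}{\phi(M)\,c^2}.$$
This is essentially $Q^{\epsilon}/(Mc^2)$. With $M\asymp Q/L$, the quantity $Q^{3-\delta}/(c^2M)\asymp Q^{2-\delta}/c^2$. Summing over $c$, the range $c\le Q^{1-\delta/2}$ gives
$$\sum_c c^{-2}\bigl(Q^{2-\delta}/c^2\bigr)^{1/4}\ll Q^{1/2-\delta/4},$$
and the tail is convergent and smaller.

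\textbf{Step 3: assembling.} The outer factor is
$$\frac{1}{N^\pm(Q)\log Q}\sum_{L,M,m} M\sqrt{LM}\,\Psi(LM/Q) \ll Q^{-2}\cdot Q^{5/2+\epsilon}.$$
Combined with the factor $1/M\asymp Q^{-1}$ from Step 2 and the $c$-sum bound, the total is
$$\ll Q^{-2}\cdot Q^{5/2}\cdot Q^{-1}\cdot Q^{1/2-\delta/4+\epsilon} = Q^{-\delta/4+\epsilon}.$$
This is $\ll Q^{-\epsilon}$ after renaming $\epsilon$.

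\textbf{Main obstacle.} The delicate point is the oscillatory range $z\gg1$, where only $z^{-1/2}$ decay is available. The resulting power savings must beat the factor $\sqrt{LM}$. This is exactly where the support $(-3,3)$ enters: the exponent balances to $Q^{(3-\delta)/4}\cdot Q^{-3/4}$. If this absolute bound turns out to be too weak, the first thing to try is integrating by parts using the asymptotic \eqref{jbessel:bigx}. Its oscillation $e(\pm x)$ with $x\propto\sqrt t$ yields extra savings in the large-$z$ range.
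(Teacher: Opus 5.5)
Your proposal is correct and follows essentially the same route as the paper. Both arguments take absolute values in the partial-summation error and bound the Ramanujan-sum factor by $\ll Q^{\epsilon}/(c^2M)$. Both then split the $t$-integral at $t_0=c^2M/(m^2L)$, using $J_v(y)\ll y^{v}$ below $t_0$ and $J_v(y)\ll y^{-1/2}$ above (with $v\in\{k-2,k-1,k\}$ from the derivative recurrence), and use $N^{\pm}(Q)\asymp Q^2$, arriving at the same final bound $Q^{-\delta/4+\epsilon}$. The only blemish is your claim that $\log(cL)\ll Q^{\epsilon}$, which fails uniformly in $c$; as the paper does, just carry the factor $\log c$ (and the $\log\log c$ coming from $1/\phi$), which is harmless against the convergent sum $\sum_c c^{-5/2}$.
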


\begin{proof}
    We note that $2J_v'(y) = J_{v-1}(y) - J_{v+1}(y) $ (e.g. see p.97 in \cite{ILS}).  Moreover, the support of $\widehat \Phi$ restricts $t$ to $t\le Q^{3-\delta}$ for some $\delta > 0$.  Thus 
\est{\Sigma_{triv, err}^{\pm} &\ll \frac{Q^{\epsilon}}{Q^2}\sum_{\substack{L, M \\  L \leq Q^{\epsilon}} } M^{1/2}L^{1/2} \,  \Psi\left(\frac{LM}{Q}\right) \sum_{\substack{ m \leq Q^{\epsilon}}} \frac{1}{m} \sum_c \frac{1}{c^2}  \\
&\qquad\times \int_{1}^{Q^{3-\delta}} \frac{\sqrt t + \log c}{t^{3/2}}\left( 1 + \frac{m\sqrt{tL}}{c\sqrt M}\right) \left|J_{v}\left( \frac{4\pi m \sqrt{tL}}{c\sqrt M}\right)\right| \> dt}
where $v \in \{k-2, k-1, k\}$ is such that it maximizes the quantity on the right above. For $t \ll \frac{c^2M}{m^2L}$, by Lemma \ref{jbessel}, we have $J_v(y) \ll y^{v}$, and this contributions 
\est{\Sigma_{triv, err, sm \ t}^{\pm} &\ll \frac{Q^{\epsilon}}{Q^2 }\sum_{\substack{L, M \\  L \leq Q^{\epsilon}} } M^{1/2}L^{1/2} \,  \Psi\left(\frac{LM}{Q}\right) \sum_{\substack{ m \leq Q^{\epsilon}}} \frac{1}{m} \sum_c \frac{\log c}{c^2}  \int_{1}^{\frac{c^2M}{m^2L}} \frac{1}{t}\left( \frac{ m \sqrt{tL}}{c\sqrt M}\right)^{v}  \> dt \\
&\ll \frac{Q^{\epsilon}}{\sqrt {Q}} \ll Q^{-\epsilon}.}
For $t \gg \frac{c^2M}{m^2 L}$, Lemma \ref{jbessel} gives $J_v(y) \ll y^{-1/2}$, and the contribution is

\est{\Sigma_{triv, err, big \ t}^{\pm} & \ll \frac{Q^{\epsilon}}{Q^2 }\sum_{\substack{L, M \\  L \leq Q^{\epsilon}} } M^{1/2}L^{1/2} \,  \Psi\left(\frac{LM}{Q}\right) \sum_{\substack{ m \leq Q^{\epsilon}}} \frac{1}{m} \sum_c \frac{\log c}{c^2}  \int_{\frac{c^2M}{m^2L}}^{Q^{3 - \delta}} \frac{1}{t}\left( \frac{ m \sqrt{tL}}{c\sqrt M}\right)^{1/2}  \> dt \\
&\ll \frac{Q^{3/4 - \delta/4 + \epsilon}}{Q^{2}} \sum_{\substack{L, M \sim \frac{Q}{L} \\ L \leq Q^{\epsilon}}} M^{1/4} L^{3/4} \ll Q^{-\delta/4 + \epsilon} \ll Q^{-\epsilon}}
when $\epsilon$ is small enough in terms of $\delta.$
\end{proof}

\subsection{Evaluating $\Sigma_{triv, main}^{\pm}$}
We now examine the main term, and complete the proof of Lemma \ref{lem:trivchar}. We begin with the following Lemma.
\begin{lem} \label{lem:sumMandsumc} Let $L$ be a square-free positive integer. Define
   \est{\mathcal S(L, m) &:= \sum_{(M, mL) = 1} \frac{\mu(M)}{M^{-1/2 + w/2 + s}}   \sum_c \frac{\phi(c M)}{c^{1 + w}} \frac{\mu\left( \frac{cM}{(c, m^2)} \right)}{\phi\left(\frac{cM}{(c, m^2)}\right)} \frac{\mu\left( \frac{c}{(c,L)}\right)}{\phi\left( \frac{c}{(c,L)}\right)} . } 
   Then 
   $$\mathcal S(L, m) = \zeta(2 + w) \zeta(-1/2 + w/2 + s) H(w, s; L, m),$$
    where 
    \begin{equation}
     H(w,s; L, m) = \prod_p H_p(w, s; L,m),
    \end{equation}
    for some $H_p(w, s, L, m)$ where the product expression for $H(w,s; L, m)$ above is absolutely convergent or is $0$ when $\tRe(w) > - 3/2$ and $\tRe(w + 2s) > 2.$  Moreover, 
    \begin{align} \label{Hp(-1, 2)}
H_p(-1, 2; L, m) = \left\{ \begin{array}{ll}
     1 - \frac{2}{p^2} + \frac{1}{p^3}  &  \ \textrm{if} \ \  p \nmid mL; \\
     0 &  \ \textrm{otherwise.} 
\end{array} \right. 
    \end{align}
    
\end{lem}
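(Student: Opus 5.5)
The plan is to write $\mathcal S(L,m)$ as an Euler product and compare it locally with $\zeta(2+w)\zeta(-1/2+w/2+s)$. First I will check that the summand is multiplicative jointly in $(M,c)$. Since $\mu(M)$ forces $M$ square-free, and $(M,mL)=1$, each of the factors $\phi(cM)$, $\mu(cM/(c,m^2))/\phi(cM/(c,m^2))$ and $\mu(c/(c,L))/\phi(c/(c,L))$ factors over primes. This gives
\[
\mathcal S(L,m)=\prod_p \sum_{\substack{\alpha\in\{0,1\},\ \beta\ge 0\\ \alpha=0 \text{ if } p\mid mL}} \frac{(-1)^\alpha}{p^{\alpha(-1/2+w/2+s)}}\,\frac{\phi(p^{\alpha+\beta})}{p^{\beta(1+w)}}\,\frac{\mu(p^{\alpha+\beta-\min(\beta,v_p(m^2))})}{\phi(p^{\alpha+\beta-\min(\beta,v_p(m^2))})}\,\frac{\mu(p^{\beta-\min(\beta,v_p(L))})}{\phi(p^{\beta-\min(\beta,v_p(L))})}.
\]
Because of the Möbius factors, only finitely many $\beta$ survive at each $p$: we need $\beta\le v_p(L)+1$ and $\alpha+\beta-\min(\beta,2v_p(m))\le1$. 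So each local factor $E_p$ is a finite sum.

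For $p\nmid mL$ the only allowed pairs are $(\alpha,\beta)\in\{(0,0),(0,1),(1,0)\}$. Writing $x=p^{-(1+w)}$ and $y=p^{-(-1/2+w/2+s)}$, the pair $(0,1)$ gives $\phi(p)\cdot x\cdot(-1/\phi(p))\cdot(-1/\phi(p))=x/(p-1)$. The pair $(1,0)$ gives $-y\,\phi(p)\cdot(-1/\phi(p))=y$. Hence
\[
E_p=1+\frac{x}{p-1}+y .
\]
I then define
\[
H_p:=E_p\,(1-p^{-2-w})(1-y),
\]
and set $H_p:=E_p(1-p^{-2-w})(1-y)$ likewise for $p\mid mL$. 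Since $x/(p-1)=p^{-2-w}+O(p^{-3-\mathrm{Re}\,w})$, expanding shows that $H_p-1$ is a combination of terms of size $p^{-3-\mathrm{Re}\,w}$, $y^2$ and $y\,p^{-2-w}$. These are summable when $\mathrm{Re}(w)>-2$ and $\mathrm{Re}(w+2s)>3$, roughly. The hypothesised region $\mathrm{Re}(w)>-3/2$, $\mathrm{Re}(w+2s)>2$ requires more care. With $|y|=p^{1/2-\mathrm{Re}(w/2+s)}$, the $y^2$ term has size $p^{1-\mathrm{Re}(w+2s)}$, which is summable exactly when $\mathrm{Re}(w+2s)>2$. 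The cross term $y\,p^{-2-w}$ is then fine because $\mathrm{Re}(w)>-3/2$. The term $p^{-3-\mathrm{Re}\,w}$ is also fine since $\mathrm{Re}(w)>-3/2>-2$.

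I expect this precise bookkeeping of exponents to be the main obstacle, together with the finitely many exceptional primes. For the finitely many $p\mid mL$, $H_p$ is a finite polynomial in $p^{-w},p^{-s}$, so it affects convergence of the product only through possibly being $0$. That explains the phrase ``or is $0$''.

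Finally I evaluate at $w=-1$, $s=2$. Then $x=1$ and $y=p^{-1}$, so for $p\nmid mL$
\[
E_p=1+\frac{1}{p-1}+\frac1p,\qquad H_p=\Big(1+\frac1{p-1}+\frac1p\Big)\Big(1-\frac1p\Big)^2 .
\]
Multiplying out gives $\big(1-\tfrac1p\big)^2+\tfrac1p\big(1-\tfrac1p\big)+\tfrac1p\big(1-\tfrac1p\big)^2=1-\tfrac2{p^2}+\tfrac1{p^3}$, as claimed.

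For $p\mid mL$ we have $\alpha=0$, so $E_p=\sum_\beta$ of the $\beta$-terms only. The pair $(0,0)$ contributes $1$. If $p\mid m$, then $\beta\le 2v_p(m)$ makes the $m$-factor equal to $1$. The $L$-factor gives $\beta\le v_p(L)+1$, with value $\mu(p)/\phi(p)=-1/(p-1)$ at $\beta=v_p(L)+1$ when $v_p(L)+1\le 2v_p(m)$. Summing $\phi(p^\beta)p^{-\beta(1+w)}$ at $w=-1$ gives $1+\sum_{\beta=1}^{B}\phi(p^\beta)$ with the last term weighted. I will check directly that this vanishes. For instance, with $p\mid L$, $p\nmid m$ (so $L$ square-free gives $v_p(L)=1$), the allowed $\beta$ are $0$ and $1$, and the sum is $1+\phi(p)\cdot\frac{\mu(p)}{\phi(p)}=0$. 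The case $p\mid m$, $p\nmid L$ gives $1+\phi(p)\cdot(-1/\phi(p))=0$ at $\beta=1$, with $\beta\ge2$ killed by the $L$-factor. The case $p\mid m$, $p\mid L$ is a similar telescoping: $1+\phi(p)+\phi(p^2)\cdot(-1/(p-1))\cdot$(appropriate factor), which I will verify vanishes. Hence $H_p(-1,2;L,m)=0$ for $p\mid mL$.
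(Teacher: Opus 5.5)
Your proposal is correct and follows the paper's argument: the paper lists the local factors $\mathcal S_p(L,m)$, namely $1+p^{-(-1/2+w/2+s)}+\frac{1}{p^{1+w}(p-1)}$ for $p\nmid mL$, $1-p^{-1-w}$ when exactly one of $p\mid m$, $p\mid L$ holds, and $1+\frac{p-1}{p^{1+w}}-\frac{p}{p^{2+2w}}$ when $p\mid m$ and $p\mid L$, and your $E_p$ agree with these. In your unfinished case the unspecified factor is $1$, so the value at $w=-1$ is $1+(p-1)-p=0$.

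One bookkeeping correction: $H_p-1$ also contains the term $-\frac{p}{p-1}p^{-4-2w}$. This term is what forces $\mathrm{Re}(w)>-3/2$, so your preliminary claim of summability for $\mathrm{Re}(w)>-2$ is not right. It is summable throughout the stated region, however, so your conclusion stands.
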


\begin{proof}
    Let $\mathcal S_p(L,m)$ be the Euler factor at $p$ of $\mathcal S(L, m)$. When $p \nmid mL$,  
    $$\mathcal S_p(L, m) = 1 + \frac{1}{p^{-1/2 + w/2 + s}} + \frac{1}{p^{1 + w}(p - 1)}.$$
When $p | mL$, 
\begin{align*} \mathcal S_p(L, m) =  \left\{
\begin{array}{ll}
     1 - \frac{1}{p^{1 + w}}  &  \ \textrm{if} \ \  p | m \ \textrm{and} \  p\nmid L \ \  \textrm{or}  \ \  p | L \ \textrm{and} \  p\nmid m; \\
     1 + \frac{p - 1}{p^{1 + w}} - \frac{p}{p^{2 + 2w}} &  \ \textrm{if} \ \  p | m \ \textrm{and} \ p | L. 
\end{array} \right. 
    \end{align*}
Then the lemma follows from above equations.  
\end{proof}

\begin{lem}  Let $\Phi$ be an even Schwartz function with $\widehat \Phi$ compactly supported in $(-3, 3)$, and also let the weight $k \geq 4.$ Then
\est{\Sigma_{triv, main}^{\pm} = \frac{1}{2} \int_{|y| > 1} \widehat \Phi(y) \> dy + O\left( \frac{1}{\log Q}\right).}
    
\end{lem}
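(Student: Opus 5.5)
We evaluate $\Sigma_{triv, main}^{\pm}$ by Mellin inversion: the sums over $M$ and $c$ become the Dirichlet series $\mathcal S(L,m)$ of Lemma~\ref{lem:sumMandsumc}, and the main term is the residue at its pole.

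\textbf{Step 1: Mellin representation.} In the main term, substitute $t = Q^{y}$ in the integral from \eqref{eqn:sumpfromPNT}. For the Bessel factor $J_{k-1}(4\pi m\sqrt{tL}/(c\sqrt M))$ use the Mellin representation \eqref{eqn:ILMellinforJBessel} in a variable $w$. For the weight $\Psi(LM/Q)$ write
\[
\Psi\left(\tfrac{LM}{Q}\right)=\frac{1}{2\pi i}\int_{(\sigma)}\widetilde\Psi(s)\left(\tfrac{Q}{LM}\right)^{s}\,ds .
\]
The powers of $c$ and $M$ then combine exactly into $c^{-1-w}$ and $M^{1/2-w/2-s}$. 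So $\Sigma_{triv,main}^{\pm}$ becomes a double integral of
\[
\widetilde\Psi(s)\,Q^{s}\,\frac{\Gamma\left(\frac{k-w-1}{2}\right)}{\Gamma\left(\frac{k+w+1}{2}\right)}\,(2\pi m)^{w}\,L^{w/2-s+1/2}
\]
times $\mathcal S(L,m)$, integrated against $\int_1^\infty t^{w/2-1/2}\widehat\Phi(\log t/\log Q)\,dt$. Summed over $L\le Q^\epsilon$ and $m\le Q^\epsilon$ with weights $\mu^2(L)/m$, this is divided by $N^{\pm}(Q)\log Q$. The extra coprimality $(M,m)=1$ is already built into $\mathcal S(L,m)$. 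The truncation $m\le Q^\epsilon$ may need to be removed, or kept via a tail bound; that is done at the end.

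Start with $\tRe w$ slightly positive and $\tRe s$ large, and apply Lemma~\ref{lem:sumMandsumc}:
\[
\mathcal S = \zeta(2+w)\,\zeta(-1/2+w/2+s)\,H .
\]

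\textbf{Step 2: the pole in $s$.} Shift $s$ to the left. The pole of $\zeta(-1/2+w/2+s)$ at $s = 3/2 - w/2$ gives the main term. Its residue carries $Q^{3/2-w/2}$ and $\widetilde\Psi(3/2-w/2)$. The remaining integral should contribute $O(Q^{2-\eta})$, which is negligible after dividing by $N^{\pm}(Q)\asymp Q^2$. This needs $H$ to be absolutely convergent in the region $\tRe(w)>-3/2$, $\tRe(w+2s)>2$.

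\textbf{Step 3: the $w$-integral.} With $t=Q^y$, the $t$-integral supplies $Q^{y(w+1)/2}$. After the residue the $w$-integrand carries
\[
Q^{3/2-w/2}\cdot Q^{y(1+w)/2} = Q^{2}\, Q^{(y-1)(1+w)/2}.
\]
So the natural point is $w=-1$, where $s=2$ and $H_p(-1,2;L,m)$ is given by \eqref{Hp(-1, 2)}.

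\begin{itemize}
\item For $y<1$, shift $w$ to the right (to $\tRe w<k-1$, staying below the Gamma poles). The factor $Q^{(y-1)(1+w)/2}$ decays, so this range contributes $o(1)$.
\item For $y>1$, shift $w$ left past $w=-1$. Here $\zeta(2+w)$ has a pole, with residue $1$. This pole produces the main term.
\end{itemize}

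\textbf{Step 4: assembling the constant.} At $w=-1$:
\begin{itemize}
\item $2\pi(2\pi m)^{-1}$ cancels the $\pi$ and the $1/m$ structure;
\item $\Gamma(k/2)/\Gamma(k/2)=1$;
\item $\widetilde\Psi(2)$ appears;
\item $H(-1,2;L,m)$ kills every $L>1$ and every $m>1$, leaving only $L=m=1$, where $H=T(2)$ (compare with \eqref{def:Ts}).
\end{itemize}
Dividing by $N^{\pm}(Q)=\frac{k-1}{24}Q^2\widetilde\Psi(2)T(2)$ and using $dt = Q^{y}\log Q\,dy$ leaves $\frac12\int_{y>1}\widehat\Phi(y)\,dy$. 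Since $\widehat\Phi$ is even, this equals $\frac12\int_{|y|>1}\widehat\Phi(y)\,dy$. The boundary region $y$ near $1$, the truncations in $m$ and $L$, and the shifted contours all contribute $O(1/\log Q)$.

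\textbf{Main obstacle.} The delicate step is the contour shift in $w$ to the left of $-1$. There, $\zeta(-1/2+w/2+s)$ and $H$ must still converge. Moving $w$ to $\tRe w\approx-1-\eta$ forces $\tRe s>2+\eta/2$ before the $s$-residue is taken. The shifts must therefore be done in the right order: take the $s$-residue first at $s=3/2-w/2$, then shift $w$. One must also track the Gamma ratio growth, which is controlled by the decay of $\widetilde\Psi$ and by the $y$-integration by parts.
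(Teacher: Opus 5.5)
The gap is in Step 2. Your claim that the double integral left after taking the residue at $s=\frac32-\frac w2$ is $O(Q^{2-\eta})$ is the real difficulty of this lemma, and it fails on the contours you describe.

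On $\tRe w=\sigma_1>0$, where you start, the $t$-integral $\int_1^{Q^{3-\delta}}t^{(w-1)/2}\widehat\Phi(\frac{\log t}{\log Q})\,dt$ can be as large as $Q^{(3-\delta)(1+\sigma_1)/2}$. Absolute convergence of $H$ forces $\tRe s>1-\frac{\sigma_1}{2}$. So the trivial bound for the leftover integral is of size $Q^{(5-\delta)/2}$ or more. This exceeds $N^{\pm}(Q)\asymp Q^2$ as soon as the support of $\widehat\Phi$ goes beyond $[-2,2]$, which is precisely the new range.

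For support near $3$, getting $Q^{\tRe s}\cdot Q^{\frac32(1+\tRe w)}$ below $Q^2$ while keeping $\tRe(w+2s)>2$ forces $\tRe w<-\frac12$. The paper uses $\tRe w=-1+\frac{1}{\log Q}$, $\tRe s=\frac32+\epsilon$, and obtains $\mathcal I_1^{\pm}\ll Q^{-1/2+\epsilon}$. But on such lines:
\begin{itemize}
\item the Bessel factor $\Gamma(\frac{k-w-1}{2})/\Gamma(\frac{k+w+1}{2})$ has size $|\tIm w|^{-1-\tRe w}\asymp 1$;
\item $\widetilde\Psi(s)$ decays only in $\tIm s$, since before the residue is taken $s$ is not tied to $w$;
\item $\zeta(2+w)$ and $\zeta(-\frac12+\frac w2+s)$ only admit $|\tIm|^{\epsilon}$-type bounds under RH.
\end{itemize}
So the $w$-integral is not absolutely convergent, and the contour cannot even be moved there. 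Your closing remark attributes the needed decay to $\widetilde\Psi$ and an unspecified integration by parts. However, $\widetilde\Psi$ does not help in this integral. And with your sharp cutoff at $t=1$, integrating by parts leaves a boundary term of size $|\widehat\Phi(0)|/(|w+1|\log Q)$, whose $|w+1|^{-1}$ decay is not enough against the zeta factors.

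The missing step is the paper's, in three moves:
\begin{enumerate}
\item Extend the $y$-integral from $[0,\infty)$ to $\mathbb R$; the part $y<0$ costs only $O(Q^{-\epsilon})$ by $J_{k-1}(x)\ll x^{k-1}$.
\item Integrate by parts twice, replacing $\int_{\mathbb R}Q^{y(w+1)/2}\widehat\Phi(y)\,dy$ by $\frac{4}{(w+1)^2(\log Q)^2}\int_{\mathbb R}Q^{y(w+1)/2}\widehat\Phi''(y)\,dy$.
\item Only then move $w$ to $\tRe w=-1+\frac1{\log Q}$ (no pole is crossed), and shift $s$ past $\frac32-\frac w2$ to $\tRe s=\frac32+\epsilon$.
\end{enumerate}

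Your treatment of the residue term is otherwise sound, and slightly simpler than the paper's. There $\widetilde\Psi(\frac32-\frac w2)$ does give rapid decay in $\tIm w$, so you may undo the integration by parts before splitting at $y=1$. You then get a simple pole at $w=-1$ instead of the paper's triple pole.

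Two smaller points:
\begin{itemize}
\item For $y<1$, shift only to a fixed line such as $\tRe w=-\frac12$. Far to the right, the sums over $L,m\le Q^{\epsilon}$ grow like $Q^{2\epsilon\tRe w}$, while $Q^{(y-1)(1+w)/2}$ gives no saving near $y=1$.
\item The residue yields $\int_1^\infty\widehat\Phi(y)\,dy=\frac12\int_{|y|>1}\widehat\Phi(y)\,dy$, not $\frac12\int_{y>1}\widehat\Phi(y)\,dy$ as you wrote.
\end{itemize}
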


\begin{proof}
Explicitly,     
\est{\Sigma_{triv, main}^{\pm} &= \frac{2\pi (k-1)}{12N^{\pm}(Q)\log Q} \sum_{\substack{L, M \\ (L, M) = 1 \\ L \leq Q^{\epsilon}}} \mu^2(L) \mu(M) M \sqrt{LM} \Psi\left( \frac{LM}{Q}\right)  \sum_{\substack{(m, M) = 1 \\ m \leq Q^{\epsilon}}} \frac{1}{m}   \\
&\quad \times  \sum_{c} \frac{\phi(cM)}{c M} \frac{\mu\left( \frac{cM}{(c, m^2)} \right)}{\phi\left(\frac{cM}{(c, m^2)}\right)} \frac{\mu\left( \frac{c}{(c,L)}\right)}{\phi\left( \frac{c}{(c,L)}\right)} 
\int_1^{\infty} \frac{ 1}{\sqrt{t}} \widehat \Phi\left(\frac{\log t}{\log Q} \right)J_{k-1}\left( \frac{4\pi m \sqrt{tL}}{c\sqrt M}\right) \> dt.}
By the change of variable $y = \frac{\log t}{\log Q}$, the integral over $t$ becomes 
\es{ \label{eqn:intt_to_inty}\log Q \int_{0}^{\infty}Q^{y/2} \widehat \Phi(y) J_{k-1}\left( \frac{4\pi m \sqrt{Q^{y}L}}{c\sqrt M}\right)\> dy.}
Moreover,
\est{ \log Q \int_{-\infty}^{0}Q^{y/2} \widehat \Phi(y) J_{k-1}\left( \frac{4\pi m \sqrt{Q^{y}L}}{c\sqrt M}\right)\> dy &\ll \log Q \int_{-\infty}^{0}Q^{y/2} |\widehat \Phi(y)| \left( \frac{m\sqrt{Q^{y}L}}{c \sqrt M}\right)^{k - 1} \> dy \\
& \ll Q^{\epsilon}\left( \frac{m\sqrt L}{c \sqrt M} \right)^{k-1}.}
Thus, the contribution from this integration is bounded by 
\es{\label{eqn:boundforintywhenyless0} &\ll \frac{Q^{\epsilon}}{Q^2} \sum_{L \leq Q^{\epsilon}} L^{k/2} \sum_{m \leq Q^{\epsilon}} m^{k - 2} \sum_{c} \frac{1}{c^{k + 1}} \sum_{M \sim \frac{Q}{L}} \frac{1}{M^{k/2 - 1}} \ll \frac{Q^{2k \epsilon}}{Q^{k/2 }} \ll Q^{-\epsilon}.}

By Equations \eqref{eqn:intt_to_inty}, \eqref{eqn:boundforintywhenyless0} and the Mellin inversion of $J$-Bessel function in Lemma \ref{jbessel} and of $\Psi$, we have, for $0 < \sigma_1 < k - 1$ and $ \sigma_1 + 2\sigma_2 > 3$,

\est{\Sigma_{triv, main}^{\pm} 
&= \frac{\pi(k-1)}{12N^{\pm}(Q)} \frac{1}{(2\pi i)^2} \int_{(\sigma_1)} \int_{(\sigma_2)}  (2\pi)^{w} \frac{\Gamma \left( \frac{k - w - 1}{2}\right)}{\Gamma \left( \frac{k +  w + 1}{2} \right)} \widetilde{\Psi}(s) Q^{s}\\
&\quad\times \sum_{\substack{L \leq Q^{\epsilon}} } \frac{\mu^2(L)}{L^{-1/2 - w/2 + s}} \sum_{\substack{ m \leq Q^{\epsilon}}} \frac{1}{m^{1 - w}} \,\sum_{(M, mL) = 1} \frac{\mu(M)}{M^{-1/2 + w/2 + s}}    \\
&\quad\times \sum_c \frac{\phi(c M)}{c^{1 + w}} \frac{\mu\left( \frac{cM}{(c, m^2)} \right)}{\phi\left(\frac{cM}{(c, m^2)}\right)} \frac{\mu\left( \frac{c}{(c,L)}\right)}{\phi\left( \frac{c}{(c,L)}\right)}   
\int_{-\infty}^{\infty} Q^{y(w/2 + 1/2)} \widehat \Phi\left(y \right)  \> dy \> ds \> dw + O(Q^{-\epsilon}).}

Next we integrate by parts twice with respect to $y$ to ensure absolute convergence in preparation for shifting contours.  Moreover, we apply Lemma \ref{lem:sumMandsumc} to the sums over $M$ and $c$. The result is that 
\est{\Sigma_{triv, main}^{\pm} &= 
\frac{\pi(k-1)}{12N^{\pm}(Q)(\log Q)^2} \frac{1}{(2\pi i)^2} \int_{(\sigma_1)} \int_{(\sigma_2)}  (2\pi)^{w} \frac{\Gamma \left( \frac{k - w - 1}{2}\right)}{\Gamma \left( \frac{k +  w + 1}{2} \right)} \widetilde{\Psi}(s) Q^{s}\\
&\quad\times \sum_{\substack{L \leq Q^{\epsilon}} } \frac{\mu^2(L)}{L^{-1/2 - w/2 + s}} \sum_{\substack{ m \leq Q^{\epsilon}}} \frac{1}{m^{1 - w}} \,\zeta(2 + w) \zeta(-1/2 + w/2 + s) H(w, s; L, m)  \\
&\quad\times  \frac{4}{(w + 1)^2}  
\int_{-\infty}^{\infty} Q^{y(w/2 + 1/2)} \widehat \Phi''\left(y \right)  \> dy \> ds \> dw + O(Q^{-\epsilon}).}
We shift the contour integration in  $w$ and $s$ to $\tRe(w) = - 1 + \frac{1}{\log Q}$ and $\tRe(s) = \frac 32 +\epsilon$, crossing a simple pole at $s= 3/2 - w/2.$  We write
\es{ \label{eqn:SigmaResat3/2-w/2andI1} \Sigma_{triv, main}^{\pm} = \mathcal R_1^{\pm} + \mathcal I_{1}^{\pm} + O(Q^{-\epsilon}),}
where 
\es{ \label{def:R1} \mathcal R_1^{\pm} &= -\frac{\pi(k-1)}{12N^{\pm}(Q) (\log Q)^2} \frac{1}{2\pi i} \int_{\left(\sigma_1\right)}   (2\pi)^{w} \frac{\Gamma \left( \frac{k - w - 1}{2}\right)}{\Gamma \left( \frac{k +  w + 1}{2} \right)} \widetilde{\Psi}(3/2 - w/2) \\
&\qquad\times \sum_{\substack{L \leq Q^{\epsilon}} } \frac{\mu^2(L)}{L^{1 - w}} \sum_{\substack{ m \leq Q^{\epsilon}}} \frac{1}{m^{1 - w}} \,\zeta(2 + w)  H\left(w, \tfrac 32 - \tfrac w2; L, m\right)  \\
&\qquad\times  \frac{4}{(w + 1)^2}  
\int_{-\infty}^{\infty} Q^{(y - 1)w/2 + y/2 + 3/2} \widehat \Phi''\left(y \right)  \> dy \> dw}
and 
\est{\mathcal I_1^{\pm} &= \frac{\pi(k-1)}{12N^{\pm}(Q)(\log Q)^2} \frac{1}{(2\pi i)^2} \int_{\left(-1 + \frac{1}{\log Q}\right)} \int_{\left(\frac 32 + \epsilon\right)}  (2\pi)^{w} \frac{\Gamma \left( \frac{k - w - 1}{2}\right)}{\Gamma \left( \frac{k +  w + 1}{2} \right)} \widetilde{\Psi}(s) Q^{s}\\
&\qquad\times \sum_{\substack{L \leq Q^{\epsilon}} } \frac{\mu^2(L)}{L^{-1/2 - w/2 + s}} \sum_{\substack{ m \leq Q^{\epsilon}}} \frac{1}{m^{1 - w}} \,\zeta(2 + w) \zeta(-1/2 + w/2 + s) H(w, s; L, m)  \\
&\qquad\times  \frac{4}{(w + 1)^2}  
\int_{-\infty}^{\infty} Q^{y(w/2 + 1/2)} \widehat \Phi''\left(y \right)  \> dy \> ds \> dw. }
By RH (and in particular the Lindel\"of bound), $ \displaystyle \zeta\left(-\tfrac 12 + \tfrac w2 + s\right) \ll ( 1  + |\tIm\left(\tfrac w2 + s\right)|)^{\epsilon}$ and $\displaystyle \zeta(2 + w) \ll |\tIm w|^{\epsilon} + \log Q$.  The integrals over $w, s$ are absolutely convergent and we have 
 \es{ \label{eqn:boundI1}\mathcal I_1^{\pm} \ll Q^{-1/2 + \epsilon}.}
Next we write $\mathcal R_1$ as 
$$ \mathcal R_1^{\pm} = \mathcal R_{1, y \leq 1}^{\pm} + \mathcal R_{1, y > 1}^{\pm},$$
where $\mathcal R_{1, y \leq 1}^{\pm}$ is the contribution when $y \leq 1$ in \eqref{def:R1}, and $\mathcal R_{1, y > 1}^{\pm}$ is the contribution from the complementary  range $y > 1.$
\vskip 0.1in
{\bf Case 1: $y \leq 1$.} For this case, we shift the integral in $w$ to $\tRe(w) = -\tfrac 12$.  Here $\tRe(w/2 + 1/2) > 0$, and  
\est{ \int_{-\infty}^{1} Q^{y(w/2 + 1/2) + 3/2 - w/2} \widehat \Phi''(y) \> dy &\ll \frac{Q^2}{ (w + 1)\log Q}.}
Moreover, the sums over $L, m$ and the integral in $w$  are absolutely convergent. Hence, 
\est{\mathcal R_{1, y \leq 1}^{\pm} &\ll \frac{1}{(\log Q)^3}.}
\vskip 0.1in 

{\bf Case 2: $y > 1$.} We shift the contour integration in $w$ to $\tRe(w) = - 1 + \epsilon$. We pick up triple poles from $\frac{\zeta(2 + w)}{(w + 1)^2}$ at $w = - 1$. The remaining integral can be bounded by $\frac{1}{(\log Q)^3}$, using the same argument to Case 1 since $\tRe(w/2 + 1/2) < 0$ and
\est{ \int_{1}^{\infty} Q^{y(w/2 + 1/2) + 3/2 - w/2} \widehat \Phi''(y) \> dy &\ll \frac{Q^2}{ (w + 1) \log Q}.}
We denote the contribution from the residue as $\mathcal R_2^{\pm}$. Explicitly, 
\est{\mathcal R_2^{\pm} &= \frac{4\pi(k-1)}{24N^{\pm}(Q) (\log Q)^2}  \int_{1}^{\infty} Q^{y/2 + 3/2} \frac{d^2}{dw^2} \left[Q^{(y-1)w/2} \mathscr G(w)\right]_{w = -1} \widehat \Phi''\left(y \right)  \> dy, }
where
\est{ \mathscr G(w) = (2\pi)^{w} \frac{\Gamma \left( \frac{k - w - 1}{2}\right)}{\Gamma \left( \frac{k +  w + 1}{2} \right)} \widetilde{\Psi}(3/2 - w/2)  \sum_{\substack{L \leq Q^{\epsilon}} } \frac{\mu^2(L)}{L^{1 - w}} \sum_{\substack{ m \leq Q^{\epsilon}}} \frac{1}{m^{1 - w}} \,  H\left(w, \tfrac 32 - \tfrac w2; L, m\right).  }
We have
\est{&\frac{d^2}{dw^2} \left[Q^{(y-1)w/2} \mathscr G(w)\right]_{w = -1} \\
& \qquad  = Q^{(-y/2 + 1/2)}\left(\frac{\log^2 Q (y - 1)^2 }{4} \mathscr G(-1) + \log Q (y - 1) \mathscr G'(-1) + \mathscr G''(-1)\right). } 
Hence from above and the integration by parts twice, we obtain that
\est{\mathcal R_2^{\pm} &= \frac{\pi(k-1)Q^{2}}{24N^{\pm}(Q)} \mathscr G(-1)  \int_{1}^{\infty}  (y - 1)^2 \widehat \Phi''\left(y \right)  \> dy + O\left( \frac{1}{\log Q}\right)\\
&= \frac{\pi(k-1)Q^{2}}{12N^{\pm}(Q)} \mathscr G(-1)  \int_{1}^{\infty}   \widehat \Phi\left(y \right)  \> dy + O\left( \frac{1}{\log Q}\right).}
Next we compute $\mathscr G(-1).$ 
\est{\mathscr G(-1) &= \frac{1}{2\pi} \widetilde \Psi(2) \sum_{L \leq Q^{\epsilon}} \frac{\mu^2 (L)}{L^2} \sum_{m \leq Q^{\epsilon}} \frac{1}{m^2} H(-1, 2; L, m).}
By Equation \ref{Hp(-1, 2)} in Lemma \ref{lem:sumMandsumc}, we have if $m$ or $L$ is not 1, then $H(-1, 2; L, m) = 0$. Thus 
\est{\mathscr G(-1) &= \frac{1}{2\pi} \widetilde \Psi(2) \prod_{p} \left( 1 - \frac{2}{p^2} + \frac{1}{p^3}\right) = \frac{1}{2\pi} \widetilde \Psi(2)  T(2),} 
where $T(s)$ is defined in \eqref{def:Ts}. Thus by Lemma \ref{lem:asympforNQ},

\est{\mathcal R_2^{\pm} =  \int_{1}^{\infty} \widehat \Phi(y) \> dy + O\left( \frac{1}{\log Q}\right)= \frac{1}{2}\int_{|y| > 1}\widehat \Phi(y) \> dy + O\left( \frac{1}{\log Q}\right).} 
Putting this together, we have 
\est{\Sigma_{triv, main}^{\pm} &= \mathcal R_1^{\pm} + O(Q^{-\epsilon}) = \mathcal R_2^{\pm} + O\left(\frac{1}{(\log Q)^3}\right) = \frac{1}{2}\int_{|y| > 1}\widehat \Phi(y) \> dy + O\left( \frac{1}{\log Q}\right)}
as desired.
\end{proof}

\section{Proof of Lemma \ref{lem:nontrivchar} -- the  contribution from non-trivial characters} 

The proof of Lemma \ref{lem:nontrivchar} requires the following Lemma.
\begin{lem} \label{lem:sumseparatecM}
Let $(j, M) = 1,$ and $\chi$ be a Dirichlet character modulo $c.$ Then 
\est{\sumstar_{b \mod {cM}} \e{\frac{jb}{cM}} \chi(b) = \mu(M) \chi(M) G_\chi(j),}    
where 
\es{\label{def:Gausssum} 
G_{\chi}(n) = \sum_{a \mod c} \chi(a) \e{\frac{an}{c}}.
}

\end{lem}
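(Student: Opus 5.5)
The plan is to split the sum over $b \bmod cM$ along the factorization of the modulus $cM$: by the Chinese Remainder Theorem when $(c,M)=1$, and by a vanishing argument when $(c,M)>1$. The hypothesis $(j,M)=1$ enters in both cases.

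\emph{Case $(c,M)>1$.} Here $\chi(M)=0$ because $\chi$ is a character modulo $c$, so I must show the left side vanishes. Fix a prime $p \mid (c,M)$ and put $h = cM/p$. Then $c \mid h$ because $p \mid M$, and every prime dividing $cM$ already divides $h$ because $p \mid c$. Hence the shift $b \mapsto b + th$, for $t \bmod p$, has three properties:
\begin{itemize}
\item it permutes the residues $b \bmod cM$ with $(b,cM)=1$, since $(b+th,cM)=1$ exactly when $(b,cM)=1$;
\item it does not change $\chi(b)$, since $b+th \equiv b \pmod c$;
\item it multiplies $\e{\frac{jb}{cM}}$ by $\e{\frac{jt}{p}}$.
\end{itemize}
Averaging the sum over $t \bmod p$ therefore produces the factor $\frac1p\sum_{t \bmod p}\e{jt/p}$. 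This factor equals $0$, because $p \nmid j$ (recall $(j,M)=1$ and $p \mid M$). So the left side is $0$, which matches the right side.

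\emph{Case $(c,M)=1$.} Every residue modulo $cM$ can be written uniquely as $b \equiv xM + yc \pmod{cM}$ with $x \bmod c$ and $y \bmod M$. Under this parametrization, $(b,cM)=1$ exactly when $(x,c)=1$ and $(y,M)=1$. Moreover, $\e{\frac{jb}{cM}} = \e{\frac{jx}{c}}\e{\frac{jy}{M}}$ and $\chi(b)=\chi(xM)=\chi(x)\chi(M)$. The sum then factors as
\[
\chi(M)\sumstar_{x \bmod c}\chi(x)\e{\frac{jx}{c}} \cdot \sumstar_{y \bmod M}\e{\frac{jy}{M}} .
\]
In the first factor I may drop the coprimality condition, because $\chi(x)=0$ when $(x,c)>1$; this factor is therefore exactly $G_\chi(j)$ as defined in \eqref{def:Gausssum}. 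The second factor is the Ramanujan sum $c_M(j)$. Since $(j,M)=1$, the standard evaluation $c_M(j)=\sum_{d\mid (M,j)} d\,\mu(M/d)$ gives $c_M(j)=\mu(M)$. Multiplying the two factors gives $\mu(M)\chi(M)G_\chi(j)$.

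The only step needing care is the first case: I must check that the shift by $cM/p$ really preserves coprimality to $cM$. This works precisely because $p$ divides both $c$ and $M$. Everything else is standard manipulation with the Chinese Remainder Theorem and Ramanujan sums.
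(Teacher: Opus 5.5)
Your proof is correct and takes the same route as the paper. When $(c,M)=1$, both use the Chinese Remainder Theorem together with the Ramanujan sum $c_M(j)=\mu(M)$. When $(c,M)>1$, both show that a complete additive character sum vanishes because $p\nmid j$. The differences are cosmetic: your translation $b\mapsto b+t\,cM/p$ handles general $c,M$ directly, so you avoid the paper's reduction by multiplicativity to $c=p^\alpha$, $M=p^\beta$. Likewise, your parametrization $b\equiv xM+yc$ removes the need for the paper's change of variables to extract $\chi(M)$.
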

\begin{proof}
For $(c, M) = 1$, by the Chinese Remainder Theorem, the sum is
\est{ &\sumstar_{x \mod {c}} \sumstar_{y \mod {M}}  \e{\frac{jx\bar M}{c}}\e{\frac{jy\bar c}{M}}  \chi(x M\bar M + y c \bar c ) \\
&\qquad \qquad = \mu(M) \sumstar_{x \mod {c}}\e{\frac{jx\bar M}{c}} \chi(x) = \mu(M) \chi(M) G_\chi(j)  }
since the sum over $y$ is the Ramanujan's sum and the last equation results from a change of variables.

Now, suppose $(c, M) \neq 1$.  By multiplicity, it suffices to examine the case $c = p^{\alpha}$ and $M = p^{\beta}$ for some $\alpha, \beta \geq 1.$  Thus, $(j, p) = 1,$ and 
\est{\sumstar_{b \mod {p^{\alpha + \beta}}} \e{\frac{jb}{cM}} \chi(b) &= \sum_{0 \leq \ell < p^{\beta} } \sumstar_{b' \mod {p^{\alpha}}}  \e{\frac{j(\ell p^{\alpha} + b')}{p^{\alpha + \beta}}} \chi(b') \\
&= \sum_{0 \leq \ell < p^{\beta} } \e{\frac{j\ell }{p^{\beta}}}\sumstar_{b' \mod {p^{\alpha}}}  \e{\frac{j b'}{p^{\alpha + \beta}}} \chi(b') \\
&= 0,}
which suffices since $\chi(M) = 0$ for $(c, M) > 1$.
\end{proof}

The contribution from non-trivial characters is 
\es{ \label{eqn:nontrivcont}\Sigma_{non}^{\pm} &= \frac{2\pi (k-1)}{12N^{\pm}(Q)\log Q} \sum_{\substack{L, M \\ (L, M) = 1 \\ L \leq Q^{\epsilon}}} \mu^2(L) \mu(M)  \sqrt{LM} \Psi\left( \frac{LM}{Q}\right)  \sum_{\substack{(m, M) = 1 \\ m \leq Q^{\epsilon}}} \frac{1}{m}   \\
&\qquad \times  \sum_{c} \frac{1}{c} \sumstar_{b \mod {cM}} \sumstar_{d\mod c} \e{\frac{m^2b}{cM}}  \e{\frac{\bar b dL}{c}} \frac{1}{\phi(c)} \sum_{\chi \neq \chi_0 \mod c}\overline{\chi(d)} \\
& \qquad \times
\sum_{p\nmid L} \frac{ \log p}{\sqrt{p}} \chi(p)\widehat \Phi\left(\frac{\log p}{\log Q} \right)J_{k-1}\left( \frac{4\pi m\sqrt{pL}}{c\sqrt M}\right). }

Since $(m, M) = 1,$ we apply Lemma \ref{lem:sumseparatecM} to derive that  
\est{\sumstar_{b \mod {cM}} \e{\frac{m^2b}{cM}} \sumstar_{d\mod c}   \e{\frac{\bar b dL}{c}} \overline {\chi(d)} &= G_{\overline{\chi}}(L)\sumstar_{b \mod {cM}} \e{\frac{m^2b}{cM}} \overline{\chi(b)} \\
&= G_{\overline{\chi}}(L) G_{\overline{\chi}}(m^2) \mu(M) \overline{\chi(M)}.}
Thus,
\es{ \label{eqn:nontrivcont}\Sigma_{non}^{\pm} &= \frac{2\pi (k-1)}{12N^{\pm}(Q)\log Q} \sum_{\substack{ L \leq Q^{\epsilon}}} \mu^2(L) \sqrt{L}    \sum_{\substack{(m, M) = 1 \\ m \leq Q^{\epsilon}}} \frac{1}{m} \sum_{c\ge 1} \frac{1}{c \phi(c)}\\
& \qquad \times     \sum_{\chi \neq \chi_0 \mod c} G_{\overline{\chi}}(L) G_{\overline{\chi}}(m^2) \sum_{(M, L) = 1}\mu^2(M) \overline{\chi(M)}  \sqrt{M} \Psi\left( \frac{LM}{Q}\right) \\
& \qquad \times
\sum_{p\nmid L} \frac{ \log p}{\sqrt{p}} \chi(p)\widehat \Phi\left(\frac{\log p}{\log Q} \right)J_{k-1}\left( \frac{4\pi m\sqrt{pL}}{c\sqrt M}\right). }

We introduce a smooth partition of unity for the sum over $p$, and write
$$\sum_p \textup{...} = \sumd_P \sum_p V\bfrac{p}{P}\textup{...},
$$where $\sumd_P$ denotes a sum over $P = 2^j$ for $j\ge 0$, and $V$ is a smooth function compactly supported on $[1/2, 3]$ satisfying $\sumd_P V\bfrac{p}{P} = 1$ for all $p \geq 1$. 
We define 
\es{ \label{def:S(p; c, L, m)}\mathscr S(P; c, L, m) &:= \sum_{(M, L) = 1}\mu^2(M) \overline{\chi(M)}  \sqrt{M} \Psi\left( \frac{LM}{Q}\right) \\
& \qquad \times\sum_{p\nmid L} \frac{ \log p}{\sqrt{p}} \chi(p) V\left(\frac pP \right) \widehat \Phi\left(\frac{\log p}{\log Q} \right)J_{k-1}\left( \frac{4\pi m\sqrt{pL}}{c\sqrt M}\right).}
Our smooth function $\Psi$ forces $M \sim \frac{Q}{L}.$  Hence, the transition region of the Bessel function is 
$$ \frac{m L\sqrt {P}}{c\sqrt Q} \sim 1.$$
By the support of $\widehat \Phi$, we have 
\es{\label{boundforP} P \ll Q^{3 - \delta}.}
We separate into two cases according to the size of $c,$
so 
\est{\Sigma_{non}^{\pm} = \Sigma_{non, sm \ c}^{\pm} + \Sigma_{non, big \ c}^{\pm},}
where $\Sigma_{non, sm \ c}^{\pm}$ is the contribution from $ c \ll \frac{mL \sqrt{P}}{\sqrt Q}$, and $\Sigma_{non, big \ c}^{\pm}$ is the contribution from $c \gg \frac{mL \sqrt{P}}{\sqrt Q}.$  It now remains to prove the following lemmas.

\begin{lem} \label{lem:nontrivsmallc} Assume GRH. Let $P \ll Q^{3 - \delta}$. Then
    $$\Sigma_{non, sm \ c}^{\pm} \ll Q^{-\epsilon},$$
    where the implied constant depends on $\delta$ and $\epsilon.$
\end{lem}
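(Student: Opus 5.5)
In the range $c \ll \frac{mL\sqrt P}{\sqrt Q}$ the Bessel argument $\frac{4\pi m\sqrt{pL}}{c\sqrt M}$ is $\gg 1$. Here I plan to use the oscillatory expansion \eqref{jbessel:bigx} and aim for square-root cancellation in both the $p$-sum and the $M$-sum of $\mathscr S(P;c,L,m)$. All other variables are handled trivially: $|G_{\overline\chi}(L)G_{\overline\chi}(m^2)| \le c\,(L,c)^{1/2}(m^2,c)^{1/2}\ll cQ^{\epsilon}$, there are $\phi(c)$ characters, and $L,m\le Q^\epsilon$.

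\textbf{Step 1: separate variables.} Write $J_{k-1}(x)=x^{-1/2}\bigl(W_k(x)e(\cdot)+\overline{W}_k(x)e(-\cdot)\bigr)$ with phase $\pm\frac{2m\sqrt{pL}}{c\sqrt M}$. The phase does not factor in $p$ and $M$, so I will remove the coupling by Mellin inversion. I would write the weight $V(p/P)\widehat\Phi(\log p/\log Q)\,\Psi(LM/Q)\,J_{k-1}(\cdots)$ as a double Mellin integral in $p^{-z_1}M^{-z_2}$. Equivalently, I would use the Mellin representation \eqref{eqn:ILMellinforJBessel} with $\mathrm{Re}\,w$ chosen suitably, combined with Mellin transforms of $V$ and $\Psi$. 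The large argument $x\asymp X:=\frac{mL\sqrt P}{c\sqrt Q}$ means the effective range of $\mathrm{Im}\,w$ is $\ll X^{1+\epsilon}$: stationary phase in the $w$-integral localizes there, and outside it the integrand decays by repeated integration by parts using $W_k^{(j)}(x)\ll x^{-j}$. The $\widehat\Phi$ weight is smooth in $\log p$ on the scale $\log Q$, so its Mellin transform decays rapidly beyond height $Q^\epsilon$. After this, $\mathscr S$ becomes an integral over $|\mathrm{Im}\, w|\ll X^{1+\epsilon}$ of a product of two separated sums:
\[
\sum_{p\nmid L}\frac{\chi(p)\log p}{p^{1/2-w/2}}V_1\!\left(\tfrac pP\right) \quad\text{and}\quad \sum_{(M,L)=1}\frac{\mu^2(M)\overline\chi(M)}{M^{-1/2+w/2}}\Psi_1\!\left(\tfrac{LM}{Q}\right).
\]
The integrand also carries a factor of size $X^{\mathrm{Re} w}$ together with the gamma ratio, and these combine to the trivial Bessel size $X^{-1/2}$.

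\textbf{Step 2: GRH bounds.} Since $\chi\ne\chi_0$, Lemma~\ref{lem:CLee3.5}, applied with $t=\mathrm{Im}\,w/2$, gives
\[
\sum_p \ll Q^\epsilon\,P^{\mathrm{Re}(w)/2}\log(c+|w|),
\]
which saves the full $\sqrt P$ relative to the trivial bound $\sqrt P$. For the $M$-sum, write $\mu^2(M)=\sum_{d^2\mid M}\mu(d)$ and use GRH for $L(s,\overline\chi)$ (Lindel\"of in $c$ and $|w|$). This gives $\ll Q^\epsilon (Q/L)^{\mathrm{Re}(1-w)/2+1/2}\cdot(Q/L)^{-1/2}$, i.e.\ a saving of essentially $\sqrt{Q}$ over the trivial $Q^{3/2}$, with polynomial loss $(c|w|)^{\epsilon}$. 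The $w$-integral over length $X$ then costs a factor $X$.

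\textbf{Step 3: assemble.} The trivial size of $\Sigma_{non,sm\,c}$ per dyadic $P$ is
\[
\frac1{Q^2}\sum_{c\ll X}\frac{1}{c\phi(c)}\,\phi(c)\,c\cdot Q^{3/2}\cdot\sqrt P\cdot X^{-1/2}.
\]
After the savings $\sqrt P$ and $\sqrt Q$, with the loss $X$ from the $w$-length, this becomes roughly
\[
Q^{-1+\epsilon}\sum_{c\ll \sqrt{P/Q}}X^{1/2}\ll Q^{-1+\epsilon}\Bigl(\frac{P}{Q}\Bigr)^{1/2}\Bigl(\frac{P}{Q}\Bigr)^{1/4}=Q^{-1+\epsilon}(P/Q)^{3/4}.
\]
For $P\ll Q^{3-\delta}$ this is $\ll Q^{1/2-3\delta/4+\epsilon}$, which is too large. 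So I expect the main obstacle to be exactly this step: the loss from the $w$-integral length is not affordable as stated.

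I would first try to recover it by not splitting the phase crudely. Instead I would keep the Bessel phase attached to the $M$-sum and apply Poisson summation in $M$ modulo $c$ (the character has conductor $\le c$). The dual sum then has length about $cX\cdot c/(Q/L)$, which is short for $c\ll X$, and each dual term gives a Gauss-sum times stationary-phase factor of size $\sqrt{Q/X}$ with a new phase in $p$. The $p$-sum would still be treated by GRH via Lemma~\ref{lem:CLee3.5}, using the Mellin decomposition only for the now mild $p$-dependence. If the numerology closes, the savings come from the character orthogonality over $\chi\bmod c$ collapsing the dual variable against $p$, and the bound should be $Q^{-\epsilon}$ precisely when $P\le Q^{3-\delta}$, matching the paper's threshold. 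I therefore expect the genuine difficulty to lie in balancing this dual sum; the remaining steps are routine bookkeeping of $Q^\epsilon$ factors.
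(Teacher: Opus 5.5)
\paragraph{Verdict.} There is a genuine gap: the proposal does not prove the lemma. Your Steps 1--2 are exactly the paper's argument: Mellin inversion of $J_{k-1}$ via \eqref{eqn:ILMellinforJBessel} and of $\Psi$, then Lemma~\ref{lem:CLee3.5} for the $p$-sum and GRH for $L(s,\overline\chi)$ in the $M$-sum. But in Step 3 you conclude that this route fails. You then fall back on a Poisson-summation scheme that is only sketched conditionally (``if the numerology closes''), and no bound is ever established. The verdict of failure rests on two accounting errors. Your approach in fact closes.

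\paragraph{First error: the cost of the $w$-integral.} There is no need to localize $|\tIm w|\ll X$, and no factor $X$ is lost. Put $\tRe w=\epsilon_0$. Then:
\begin{itemize}
\item the Gamma ratio satisfies $\Gamma(\frac{k-w-1}{2})/\Gamma(\frac{k+w+1}{2})\ll(1+|w|)^{-1-\epsilon_0}$;
\item the $p$-sum is $\ll P^{\epsilon_0/2}(\log Q)^{1+\epsilon}\log(c+|w|)$;
\item the $M$-sum, after Mellin inversion in $\Psi$, is $L(-\frac12+\frac w2+s,\overline\chi)$ times an absolutely convergent Euler product. Shifting to $\tRe s=1+\epsilon_0$ crosses no pole since $\chi\ne\chi_0$, and by GRH it costs only $(Q/L)^{1+\epsilon_0}\bigl(c(1+|w|+|s|)\bigr)^{\epsilon}$.
\end{itemize}
So the $w$-integral converges absolutely, and $\mathscr S(P;c,L,m)\ll Q^{1+O(\epsilon_0)}$ uniformly in $c$. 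You simply forgo the Bessel decay $X^{-1/2}$, at the price of $X^{\epsilon_0}$. Your size of the integrand is also off. At $|\tIm w|\asymp X$ it is $X^{-1-\tRe w}\cdot X^{\tRe w}=X^{-1}$, not $X^{-1/2}$. So even a localized separation costs $O(1)$, not $X^{1/2}$.

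\paragraph{Second error: the sum over $c$.} Even granting your pessimistic per-$c$ size $Q X^{1/2}$, you evaluated the $c$-sum incorrectly. Here $X=X_0/c$ with $X_0=mL\sqrt P/\sqrt Q$, so $\sum_{c\ll X_0}X^{1/2}\asymp X_0$, not $X_0^{3/2}$. You replaced $X$ by its value at $c=1$ for every $c$. Either way, using $\sum_{\chi}|G_{\overline\chi}(L)G_{\overline\chi}(m^2)|\le\phi(c)^2$ and summing trivially over $c\ll X_0$, over $L,m\le Q^\epsilon$, and over the $O(\log Q)$ dyadic $P\ll Q^{3-\delta}$, one gets
\[
\Sigma_{non,sm\,c}^{\pm}\ll \frac{Q^{O(\epsilon)}}{Q^{2}}\cdot Q^{1+O(\epsilon_0)}\cdot\frac{\sqrt{Q^{3-\delta}}}{\sqrt Q}\ll Q^{-\delta/2+O(\epsilon_0+\epsilon)}\ll Q^{-\epsilon}
\]
for $\epsilon_0,\epsilon$ small in terms of $\delta$. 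This is precisely the paper's proof. The Poisson/dual-sum detour is unnecessary, and as written it is not an argument.
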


\begin{lem} \label{lem:nontrivbigc} Assume GRH. Let $P \ll Q^{3 - \delta}$. Then
    $$\Sigma_{non, big \ c}^{\pm} \ll Q^{-\epsilon}, $$
    where the implied constant depends on $\delta$ and $\epsilon.$
\end{lem}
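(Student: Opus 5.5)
For Lemma~\ref{lem:nontrivbigc}, in the range $c \gg mL\sqrt P/\sqrt Q$ the Bessel argument $y = 4\pi m\sqrt{pL}/(c\sqrt M)$ is $\ll 1$, so $J_{k-1}(y)\ll y^{k-1}$ by Lemma~\ref{jbessel}. The plan is to bound $\mathscr S(P;c,L,m)$ by handling the $p$-sum and the $M$-sum separately.

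\textbf{Step 1: separating the variables.} First I remove the coupling between $p$ and $M$ inside $J_{k-1}$. I can either insert the Mellin representation \eqref{eqn:ILMellinforJBessel} with $\sigma$ close to $k-1$, or use the power series \eqref{jbessel:smallx}, which is legitimate because $y\ll 1$. Either way, up to a rapidly convergent sum or an integral over $w$, $J_{k-1}(y)$ becomes $y^{w}$. This factors as
\[
\bigl(p^{w/2}V(p/P)\bigr)\cdot \bigl(M^{-w/2}\Psi(LM/Q)\bigr)\cdot \bigl(mL^{1/2}/c\bigr)^{w}.
\]

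\textbf{Step 2: the sum over $p$.} Since $\chi\ne\chi_0$, Lemma~\ref{lem:CLee3.5} (this uses GRH for $L(s,\chi)$) bounds the $p$-sum by $Q^{\epsilon}$ times the scale factor $P^{\operatorname{Re} w/2}$.

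\textbf{Step 3: the sum over $M$.} I bound the $M$-sum trivially by $(Q/L)^{3/2-\operatorname{Re} w/2}$. The Gauss sums satisfy $|G_{\bar\chi}(L)G_{\bar\chi}(m^2)|\le c\,(L,c)^{1/2}(m^2,c)^{1/2}$ after reduction to primitive characters. The sum over $\chi$ contributes a factor $\phi(c)$, which cancels the $1/\phi(c)$ in \eqref{eqn:nontrivcont}.

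\textbf{Step 4: summing over $c$.} Combining Steps 1--3, the big-$c$ range contributes
\[
\frac{Q^{\epsilon}}{Q^2}\sum_{c\gg C_0} c^{\,1-(k-1)}\Bigl(\frac{mL\sqrt P}{\sqrt{Q}}\Bigr)^{k-1}\frac{Q^{3/2}}{L^{3/2}},\qquad C_0=\frac{mL\sqrt P}{\sqrt Q}.
\]
For $k\ge4$ the $c$-sum converges and is dominated by $c\sim\max(1,C_0)$, which gives roughly $Q^{-1/2+\epsilon}C_0^{2}\ll Q^{-1/2+\epsilon}P/Q$. For $P\ll Q^{3-\delta}$ this is far too large, so the trivial bound on the $M$-sum must be improved.

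\textbf{Step 5 (main obstacle): cancellation in $M$.} The fix is to exploit cancellation in $\sum_M\mu^2(M)\bar\chi(M)M^{1/2-w/2}\Psi(LM/Q)$. By Mellin inversion this sum is expressed through $L(s,\bar\chi)/L(2s,\bar\chi^2)$. There is no pole because $\chi\neq\chi_0$; the subtle case is $\chi^2=\chi_0$, where $L(2s,\bar\chi^2)$ carries a pole, and I will need to check that this case still gives no main term. Shifting the contour to $\operatorname{Re} s=1/2+\epsilon$ and using GRH (Lindelöf) for $L(s,\bar\chi)$ gives the bound $(Q/L)^{1-\operatorname{Re} w/2}c^{\epsilon}$. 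This saves $Q^{1/2}$ over Step 3.

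Combined with the Step 2 bound $P^{\operatorname{Re} w/2}$, the total becomes $\ll Q^{-1+\epsilon}\max(1,C_0)^{2}\ll Q^{\epsilon}P/Q^{2}$. This is still too weak for $P$ near $Q^{3}$. So here I expect to need the square-root cancellation in both $p$ and $M$ simultaneously, together with an average over $\chi$: apply the large sieve for characters to the bilinear form $\sum_\chi\bigl|\sum_p\bigr|\,\bigl|\sum_M\bigr|$ via Cauchy--Schwarz, which saves an extra factor of about $c$. Checking that the exponents close exactly at $P\ll Q^{3-\delta}$ is the step I expect to be hardest. I anticipate that it balances precisely at the threshold $3$, matching the stated support restriction.
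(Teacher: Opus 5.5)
\textbf{There is a genuine gap.} As written, your proposal does not prove the lemma. It ends with an unexecuted large-sieve step, and the only reason that step seems necessary is a dropped factor of $1/c$.

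\textbf{The miscount.} The prefactor in \eqref{eqn:nontrivcont} is $\frac{1}{c\,\phi(c)}$, not $\frac{1}{\phi(c)}$: the $\frac{1}{cM}$ from Petersson against the $M$ in front leaves a $\frac1c$. The Gauss sums satisfy $\sum_{\chi\neq\chi_0}|G_{\bar\chi}(L)G_{\bar\chi}(m^2)|\le\phi(c)^2$, by orthogonality and Cauchy--Schwarz; your pointwise bound gives the same up to $Q^{\epsilon}$. So the character and Gauss-sum factors together contribute at most $\phi(c)/c\le 1$. Your Step 4 should therefore have $\sum_{c\gg C_0}c^{-(k-1)}$, not $c^{1-(k-1)}$; more precisely $c^{-(k-1+\ell)}$ for the $\ell$-th term of \eqref{jbessel:smallx}. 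That one missing $c$ is what inflates both your Step 4 and Step 5 totals by a factor $C_0$.

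\textbf{With the count corrected, your Step 5 already finishes the proof, and it is exactly the paper's argument.}
\begin{itemize}
\item Expand $J_{k-1}$ by its power series.
\item Bound the $p$-sum by $(\log Q)^{1+\epsilon}\log c$ via Lemma~\ref{lem:CLee3.5}. Note this is $\log c$, not $Q^{\epsilon}$, since $c$ is unbounded; it is absorbed by the convergent $c$-sum.
\item Write the $M$-sum via Mellin inversion as an integral of $(Q/L)^{s}L(\frac{k+\ell}{2}-1+s,\bar\chi)H_\ell(s,\bar\chi)$. Shift to $\tRe(s)=\frac32-\frac{k+\ell}{2}+\epsilon_0$, so GRH gives $(Q/L)^{1-\frac{k-1+\ell}{2}+\epsilon_0}$.
\end{itemize}
Up to the factor $(2\pi)^{k-1+\ell}/(\ell!\,(k-1+\ell)!)$, the $\ell$-th term is then
\[
\frac{Q^{\epsilon}}{Q^{2}}\,C_0^{\,k-1+\ell}\Bigl(\frac{Q}{L}\Bigr)^{1+\epsilon_0}\max(1,C_0)^{-(k-2+\ell)}\ll Q^{-3/2+\epsilon_0+\epsilon}\sqrt{P}.
\]
Summing over dyadic $P\ll Q^{3-\delta}$ gives $Q^{-\delta/2+\epsilon_0+\epsilon}\ll Q^{-\epsilon}$ for $\epsilon_0,\epsilon$ small in terms of $\delta$.

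\textbf{Remaining points.}
\begin{itemize}
\item The threshold $3$ comes from $\sqrt P/Q^{3/2}$. It does not come from any extra saving over $\chi$, so the large sieve is not needed. Your claim that it would ``balance precisely at $3$'' is unsupported.
\item Your concern about $\chi^2=\chi_0$ is unfounded. The pole of $L(2s,\bar\chi^2)$ sits in the denominator, and on the shifted line $1/L(2s,\bar\chi^2)$ converges absolutely, so no main term arises.
\end{itemize}
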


\subsection{Proof of Lemma \ref{lem:nontrivsmallc} - small $c$ }

From the Mellin integral representation of $J_{k - 1}$ in \eqref{eqn:ILMellinforJBessel} , 
\est{J_{k-1}\left( \frac{4\pi m\sqrt{pL}}{c\sqrt M}\right) = \frac{1}{4\pi i} \int_{(\sigma)} (2\pi)^{w} \frac{\Gamma\left(\frac{k - w - 1}{2}\right)}{\Gamma\left( \frac{k + w + 1}{2} \right)} \left( m\frac{\sqrt {pL}}{c\sqrt M} \right)^w  \> dw }
where $0 < \sigma < k -1. $
Also, by the Mellin inversion of $\Psi$, equation \eqref{def:S(p; c, L, m)} can be written as
\est{\mathscr S(P; c, L, m) &= \frac{1}{2(2\pi i)^2} \int_{(\sigma_1)} \int_{(\sigma_2)} (2\pi)^{w} \left( \frac{m\sqrt L}{c}\right)^w  \left( \frac{Q}{L} \right)^s\frac{\Gamma\left(\frac{k - w - 1}{2}\right)}{\Gamma\left( \frac{k + w + 1}{2} \right)} \widetilde{\Psi}(s) \\
& \qquad\times \sum_{(M, L) = 1}\frac{\mu^2(M) \overline{\chi(M)} }{M^{-1/2 + w/2 + s}}  \sum_{p\nmid L} \frac{ \log p}{p^{1/2 - w/2}} \chi(p) V\left(\frac pP \right) \widehat \Phi\left(\frac{\log p}{\log Q} \right) \> ds \> dw,}
where $\tRe(w + 2s) > 3$. The sum over $M$ is 
$ L(-1/2 + w/2 + s, \overline {\chi}) H(w, s; \overline {\chi})$ where $H(w, s; \overline {\chi})$ is absolutely convergent when $\tRe(w + 2s) > 2.$ By GRH, for $\sigma \geq 0,$
\es{ \label{eqn:boundforLat1/2withGRH} L\left( \frac 12 + \sigma + it, \chi \right) \ll (1 + |tc|)^{\epsilon}. }
Let $\tRe(w) = \epsilon_0$ for some  $\epsilon_0>0$ to be chosen later depending on the value of $\delta$ in \eqref{boundforP}. We shift the integral in $s$ to $\tRe(s) = 1 + \epsilon_0$. Since $\chi$ is not a principal character, there is no pole. Moreover, by Lemma \ref{lem:CLee3.5}, 
\est{ \sum_{p\nmid L} \frac{ \log p}{p^{1/2 - w/2}} \chi(p) V\left(\frac pP \right) \widehat \Phi\left(\frac{\log p}{\log Q} \right) \ll P^{\epsilon_0/2}\big[(\log P)^{1 + \epsilon}\log (c + |\tIm(w)| ) + \log L\big]. }
Therefore, the integrals over $w$ and $s$ are absolutely convergent, and we obtain that
\est{\mathscr S(P; c, L, m) \ll (\log Q)^{1 + \epsilon} (\log c)\left( \frac{m\sqrt L}{c}\right)^{\epsilon_0} \left( \frac{Q}{L}\right)^{1 + \epsilon_0} P^{\epsilon_0/2}. }
Hence,
\est{\Sigma_{non, sm \ c}^{\pm} &= \frac{2\pi (k-1)}{12N^{\pm}(Q)\log Q} \sumd_{P \ll Q^{3 -\delta}}\sum_{\substack{ L \leq Q^{\epsilon}}} \mu^2(L) \sqrt{L}    \sum_{\substack{(m, M) = 1 \\ m \leq Q^{\epsilon}}} \frac{1}{m} \sum_{\substack{(c, M) = 1 \\ c \ll \frac{mL\sqrt P}{\sqrt Q}}} \frac{1}{c \phi(c)}\\
& \times     \sum_{\chi \neq \chi_0 \mod c} G_{\overline{\chi}}(L) G_{\overline{\chi}}(m^2) \mathscr S(P; c, L, m)\\
&\ll \frac{(\log Q)^{\epsilon}}{Q^{1 - \epsilon_0}} \sumd_{P \ll Q^{3 -\delta}}P^{\epsilon_0/2}\sum_{\substack{ L \leq Q^{\epsilon}}} \frac{1}{L^{1/2 + \epsilon_0/2}}   \sum_{\substack{(m, M) = 1 \\ m \leq Q^{1 + \epsilon}}} \frac{1}{m^{1 - \epsilon_0}} \sum_{\substack{c \ll \frac{mL\sqrt P}{\sqrt Q}}} \frac{1}{c^{1 + \epsilon_0} \phi(c)} \\
&\times  \sum_{\chi \neq \chi_0 \mod c} |G_{\overline{\chi}}(L) G_{\overline{\chi}}(m^2)|. }
By orthogonality, we have
$$ \sum_{\chi \mod c} |G_{\chi}(n)|^2 = (\phi(c))^2,$$ 
so by Cauchy's inequality, it follows that
\es{ \label{eqn:boundforsumofGausssum}\sum_{\chi \neq \chi_0 \mod c} |G_{\overline{\chi}}(L) G_{\overline{\chi}}(m^2)| \leq (\phi(c))^2.} 
Therefore,
\est{\Sigma_{non, sm \ c}^{\pm} &\ll \frac{Q^{\epsilon}}{Q^{1 - \epsilon_0}} \sumd_{P \ll Q^{3 -\delta}}P^{\epsilon_0/2}\sum_{\substack{ L \leq Q^{\epsilon}}} \frac{1}{L^{1/2 + \epsilon_0/2}}   \sum_{\substack{(m, M) = 1 \\ m \leq Q^{\epsilon}}} \frac{1}{m^{1 - \epsilon_0}} \sum_{\substack{(c, M) = 1 \\ c \ll \frac{mL\sqrt P}{\sqrt Q}}} \frac{1}{c^{ \epsilon_0}} \\
&\ll \frac{Q^{2\epsilon} Q^{3\epsilon/2 - 3\epsilon \epsilon_0/2} Q^{3/2 - \delta/2} }{Q^{3/2 - 3\epsilon_0/2}} \ll Q^{-\epsilon}}
when $\epsilon_0$ is chosen to be sufficiently small in terms of $\delta$.

\subsection{Proof of Lemma \ref{lem:nontrivbigc} - big $c$}
From \eqref{jbessel:smallx}, we write the Bessel function as 
\est{J_{k-1}\left(\frac{4\pi m \sqrt{pL}}{c\sqrt M}\right) = \sum_{\ell = 0}^{\infty} \frac{(-1)^{\ell}}{\ell ! (\ell + k -1)!} \left(\frac{2\pi m \sqrt{pL}}{c\sqrt M} \right)^{k-1 + \ell}. }
Therefore, equation \eqref{def:S(p; c, L, m)} is 
\est{\mathscr S(P; c, L, m) &= \sum_{\ell = 0}^{\infty} \frac{(-1)^{\ell}}{\ell ! (\ell + k -1)!} \left(\frac{2\pi m \sqrt{PL}}{c} \right)^{k-1 + \ell} \sum_{(M, L) = 1}\frac{\mu^2(M) \overline{\chi(M)}}{M^{k/2 - 1 + \ell/2}} \Psi\left( \frac{LM}{Q}\right) \\
&\qquad \times\sum_{p\nmid L} \frac{ \log p}{\sqrt{p}} \chi(p) \left( \frac{p}{P}\right)^{\frac{k-1 + \ell}{2}}V\left(\frac pP \right) \widehat \Phi\left(\frac{\log p}{\log Q} \right). }
By Lemma \ref{lem:CLee3.5}, the sum over $p$ is bounded by 
$$ (\log P)^{1 + \epsilon} (\log c) + \log L.$$
For the sum over $M$, by Mellin inversion of $\Psi$, we have for $\sigma > 2 - k/2 - \ell/2,$
\est{ \sum_{(M, L) = 1}\frac{\mu^2(M) \overline{\chi(M)}}{M^{k/2 - 1 + \ell/2}} \Psi\left( \frac{LM}{Q}\right) &= \frac{1}{2\pi i} \int_{(\sigma)} \widetilde \Psi(s) \left( \frac{Q}{L}\right)^s\sum_{(M, L) = 1}\frac{\mu^2(M) \overline{\chi(M)}}{M^{k/2 + \ell/2 - 1 + s}} \> ds \\
&= \frac{1}{2\pi i} \int_{(\sigma)} \widetilde \Psi(s) \left( \frac{Q}{L}\right)^s L\left(\frac k2 +\frac \ell 2- 1 + s, \overline {\chi}\right) H_{\ell}(s, \overline{\chi})\> ds }
where $H_{\ell}(s, \overline{\chi})$ is absolutely convergent when $\tRe(s) > 3/2 - k/2 - \ell/2.$ We shift the contour to $\tRe(s) = 3/2 - k/2 - \ell/2 + \epsilon_0.$  From the bound in \eqref{eqn:boundforLat1/2withGRH}, we have
\est{ \sum_{(M, L) = 1}\frac{\mu^2(M) \overline{\chi(M)}}{M^{k/2 - 1 + \ell/2}} \Psi\left( \frac{LM}{Q}\right) \ll \left( \frac{Q}{L}\right)^{3/2 - k/2 - \ell/2 + \epsilon_0}. }
Therefore
\est{\mathscr S(P; c, L, m) \ll (\log Q)^{1 + \epsilon} (\log c) \sum_{\ell = 0}^{\infty} \frac{1}{\ell ! (\ell + k -1)!} \left(\frac{2\pi m \sqrt{PL}}{c} \right)^{k-1 + \ell} \left( \frac{Q}{L} \right)^{3/2 - k/2 - \ell/2 + \epsilon_0}.}
This and \eqref{eqn:boundforsumofGausssum} implies that
\est{\Sigma_{non, big \ c}^{\pm} &= \frac{2\pi (k-1)}{12N^{\pm}(Q)\log Q} \sumd_{P \ll Q^{3 -\delta}}\sum_{\substack{ L \leq Q^{\epsilon}}} \mu^2(L) \sqrt{L}    \sum_{\substack{(m, M) = 1 \\ m \leq Q^{\epsilon}}} \frac{1}{m} \sum_{\substack{c \gg \frac{mL\sqrt P}{\sqrt Q}}} \frac{1}{c \phi(c)}\\
&\qquad \times     \sum_{\chi \neq \chi_0 \mod c} G_{\overline{\chi}}(L) G_{\overline{\chi}}(m^2) \mathscr S(P; c, L, m) \\
&\ll \frac{Q^{\epsilon}}{Q^2} \sum_{\ell = 0}^{\infty} \frac{(2\pi)^{k - 1 + \ell}}{\ell ! (\ell + k -1)!} \sumd_{P \ll Q^{3 -\delta}}\sum_{\substack{ m, L \leq Q^{\epsilon}}} \frac{\sqrt L}{m} \left( m \sqrt{PL} \right)^{k-1 + \ell}  \\
& \qquad\times \left( \frac{Q}{L} \right)^{3/2 - k/2 - \ell/2 + \epsilon_0}\sum_{c \gg \frac{mL\sqrt P}{\sqrt Q}} \frac{\log c}{c^{k-1 + \ell}} \\
& \ll  \frac{Q^{\epsilon}}{Q^{3/2 - \epsilon_0}} \sumd_{P \ll Q^{3 -\delta}} \sqrt P \ll Q^{-\delta/2 + \epsilon_0 + \epsilon} \ll Q^{-\epsilon}}
upon choosing small enough $\epsilon_0.$

\section{Proof of Corollary \ref{non-vanishing}} \label{non-v}
Let $\mathcal{A}$ denote the class of even Schwartz class functions $\Phi$ with $\Phi(x) \ge 0$ for $x\in \mathbb{R}$ and $\mathrm{supp}(\widehat{\Phi}) \subset (-3,3)$. Our proof of  Corollary \ref{non-vanishing} relies on determining the constants
\begin{equation} \label{extremal}
g^\pm := \inf_{ \substack{\Phi \in \mathcal{A} \\ \Phi(0)>0 } } \frac{1}{\Phi(0)} \int_{-\infty}^\infty \Phi(x) \, \left\{ 1 \pm \frac{\sin 2 \pi x}{2\pi x} \right\} \, dx.
\end{equation}
Using the Fourier pair
\[
\Phi(x) = \Big(\frac{\sin 3\pi x}{3 \pi x} \Big)^2 \quad \text{and} \quad \widehat{\Phi}(\xi) = \frac{1}{3} \max\Big(1-\frac{|\xi|}{3},0\Big)
\]
together with a standard approximation argument, it follows that $g^+ \le \frac{11}{18} $ and $g^- \le \frac{1}{18} $. These bounds are quite good, but not optimal.

There are at least two different approaches that can be used to solve the optimization problem of determining the constants $g^\pm$ defined in \eqref{extremal}.  There is a method developed by Iwaniec, Luo, and Sarnak \cite[Appendix A]{ILS} relying on Fredholm theory and an alternate approach via the framework of reproducing kernel Hilbert spaces of entire functions developed in \cite{CCM,CCLM}. Freeman and Miller \cite{FM2}, extending the ideas in \cite{ILS}, solved a similar optimization problem that we can use to determine the values of $g^\pm$. Let $\mathcal{A}^*$ denote the class of even entire functions $f$ with $f(x) \ge 0$ for $x\in \mathbb{R}$ and $\mathrm{supp}(\widehat{f}) \subset (-3,3)$. Then it follows from  \cite[Theorem 1.1 and Corollary 1.2]{FM2} that
\[
\inf_{ \substack{f \in \mathcal{A}^* \\ f(0)>0 } } \frac{1}{f(0)} \int_{-\infty}^\infty f(x) \, \left\{ 1 + \frac{\sin 2 \pi x}{2\pi x} \right\} \, dx = \frac{5}{12} + \frac{\sqrt{2}}{3 \sqrt{2} + 9 \tan(\frac{1}{2 \sqrt{2}})}
\]
and
\[
\inf_{ \substack{f \in \mathcal{A}^* \\ f(0)>0 } } \frac{1}{f(0)} \int_{-\infty}^\infty f(x) \, \left\{ 1 - \frac{\sin 2 \pi x}{2\pi x} \right\} \, dx = -\frac{3}{4}+ \frac{2}{2 + \sqrt{2} \tan(\frac{1}{2 \sqrt{2}})},
\]
and that these infimums are attained for some $f\in\mathcal{A}^*$. In fact, using Freeman and Miller's work, one can construct the optimal functions. Although the functions $f$ that attain these bounds are entire, they may not belong to the Schwartz class as we are assuming in the hypotheses of Theorem \ref{thm:main}. This is not a major concern  and an approximation argument can be used to prove that 
\begin{equation} \label{FMb}
g^+=\frac{5}{12} + \frac{\sqrt{2}}{3 \sqrt{2} + 9 \tan(\frac{1}{2 \sqrt{2}})} \quad \text{and} \quad 
g^-=-\frac{3}{4}+ \frac{2}{2 + \sqrt{2} \tan(\frac{1}{2 \sqrt{2}})},
\end{equation}
as well (although the infimums are not attained by Schwartz class functions $\Phi \in \mathcal{A}$). The interested reader can find the precise details of this approximation argument in the proof of  \cite[Theorem 2]{CCM}.

We are now in a position to prove Corollary \ref{non-vanishing}

\begin{proof}[Proof of Corollary \ref{non-vanishing}]
For $\Phi \in \mathcal{A}$, on one hand  note that
\[
\begin{split}
\mathscr{OL}^{\pm}(Q) &:= \frac{1}{N^\pm(Q)}\sumb_q \Psi\bfrac{q}{Q} \sum_{f \in \mathcal H_{k}^{\pm}(q)} \sum_{\gamma_f} \Phi \left( \frac{\gamma_f }{2 \pi} \log Q\right)
\\
&\ge \frac{1}{N^\pm(Q)}\sumb_q \Psi\bfrac{q}{Q} \sum_{f \in \mathcal H_{k}^{\pm}(q)} \Phi(0) \cdot \mathop{\mathrm{ord}}\limits_{s=\frac12} L(s,f),
\end{split}
\]
where (by positivity) the inequality follows from considering only the contribution of the zeros at the central point, $s=1/2$, and discarding all other terms in the sum over zeros. On the other hand, 
Theorem \ref{thm:main} implies that
\[
\lim_{Q \to \infty} \mathscr{OL}^{+}(Q) = \int_{-\infty}^\infty \Phi(x) \, \left\{ 1 + \frac{\sin 2 \pi x}{2\pi x} \right\} \, dx
\]
and 
\[
\lim_{Q \to \infty} \mathscr{OL}^{-}(Q) = \Phi(0) + \int_{-\infty}^\infty \Phi(x) \, \left\{ 1 - \frac{\sin 2 \pi x}{2\pi x} \right\} \, dx.
\]
Therefore, since these results hold for every $\Phi \in \mathcal{A}$, we deduce that
\begin{equation}\label{g1}
\limsup_{Q \to \infty} \frac{1}{N^+(Q)}\sumb_q \Psi\bfrac{q}{Q} \sum_{f \in \mathcal H_{k}^{+}(q)} \mathop{\mathrm{ord}}\limits_{s=\frac12} L(s,f) \ \le \ g^+ 
\end{equation}
and
\begin{equation}\label{g2}
\limsup_{Q \to \infty} \frac{1}{N^-(Q)}\sumb_q \Psi\bfrac{q}{Q} \sum_{f \in \mathcal H_{k}^{-}(q)} \mathop{\mathrm{ord}}\limits_{s=\frac12} L(s,f) \ \le \ 1 + g^-.
\end{equation}
We now use the values of $g^\pm$ in \eqref{FMb} to give a lower-bound for the proportion of non-vanishing of $L$-functions and their derivatives at central point in our families given in Corollary \ref{non-vanishing}. For any integer $m \ge 0$, let
\[
P_m^\pm(Q)= \frac{1}{N^\pm(Q)}\sumb_q \Psi\bfrac{q}{Q} \Big| \Big\{ f \in \mathcal H_{k}^{\pm}(q) : \mathop{\mathrm{ord}}\limits_{s=\frac12} L(s,f) = m\Big\} \Big|, 
\]
so that 
\[
\sum_{m=0}^\infty P_m^\pm(Q)=1.
\]
Note that quantities we are interested in bounding below are given by
\[
\liminf_{Q\to \infty} P_0^+(Q)= \liminf_{Q\to \infty} \frac{1}{N^{+}(Q)}  \sumb_q  \Psi\bfrac{q}{Q} \, \Big| \left\{ f \in \mathcal H_k^{+}(q) : L(\tfrac{1}{2},f) \ne 0 \right\} \Big|
\]
and, since $L(\frac{1}{2},f)=0$ when $f \in  H_k^{-}(q)$,
\[
\liminf_{Q\to \infty} P_1^-(Q)= \liminf_{Q\to \infty} \frac{1}{N^{-}(Q)}  \sumb_q  \Psi\bfrac{q}{Q} \, \Big| \left\{ f \in \mathcal H_k^{-}(q) : L'(\tfrac{1}{2},f) \ne 0 \right\} \Big|.
\]
Moreover, by \eqref{g1} and \eqref{g2}, we have
\begin{equation} \label{g12}
\limsup_{Q\to \infty} \sum_{m=1}^\infty m\, P_m^+(Q) = \limsup_{Q\to \infty} \frac{1}{N^+(Q)}\sumb_q \Psi\bfrac{q}{Q} \sum_{f \in \mathcal H_{k}^{+}(q)} \mathop{\mathrm{ord}}\limits_{s=\frac12} L(s,f) \le g^+
\end{equation}
and
\begin{equation} \label{g22}
\limsup_{Q\to \infty} \sum_{m=1}^\infty m\, P_m^-(Q) = \limsup_{Q\to \infty} \frac{1}{N^-(Q)}\sumb_q \Psi\bfrac{q}{Q} \sum_{f \in \mathcal H_{k}^{-}(q)} \mathop{\mathrm{ord}}\limits_{s=\frac12} L(s,f) \le 1+g^-.
\end{equation}
Now we take into consideration the sign of the functional equation of $L(s,f)$ for $f \in  H_k^{\pm}(q)$. For $f \in H_k^{+}(q)$, the order of a zero of $L(s,f)$ at $s=\frac{1}{2}$ is always even, so $P_{2m+1}^+(Q)=0$. Thus, by \eqref{g12}, for any $\varepsilon>0$, we have
\[
1 = P_0^+(Q) + \sum_{m=1}^\infty P_{m}^+(Q) \le P_0^+(Q) + \frac{1}{2} \sum_{m=2}^\infty m \, P_{m}^+(Q) \le P_0^+(Q) + \frac{1}{2} \, g^+ + \varepsilon,
\]
when $Q$ is sufficiently large. 
Hence $P_0^+(Q) \ge 1 - \dfrac{1}{2}\,g^+ - \varepsilon$, and therefore
\[
\begin{split}
\liminf_{Q \rightarrow \infty } \frac{1}{N^{+}(Q)}  \sumb_q & \Psi\bfrac{q}{Q} \, \Big| \left\{ f \in \mathcal H_k^{+}(q) : L(\tfrac{1}{2},f) \ne 0 \right\} \Big|  \ge \frac{19}{24} - \frac{1}{6+9 \sqrt{2} \tan(\frac{1}{2\sqrt{2}})}.
\end{split}
\]
This proves the first assertion of the corollary. To prove the second assertion, for $f \in H_k^{-}(q)$, we observe that the order of a zero of $L(s,f)$ at $s=\frac{1}{2}$ is always odd, so $P_{2m}^+(Q)=0$. Therefore, by \eqref{g22}, for any $\varepsilon>0$, we see that
\[
1= \sum_{m=0}^\infty P_{m}^-(Q) \le  P_1^-(Q) + \frac{1}{3} \sum_{m=3}^\infty m \, P_{m}^-(Q) \le \frac{2}{3} \, P_1^-(Q) + \frac{1}{3} \,(1+g^-) + \varepsilon,
\]
when $Q$ is sufficiently large and so $P_1^-(Q) \ge 1 - \dfrac{1}{2}\,g^- - \varepsilon$. Hence 
\[
\liminf_{Q \rightarrow \infty } \frac{1}{N^{-}(Q)}  \sumb_q  \Psi\bfrac{q}{Q} \, \Big| \left\{ f \in \mathcal H_k^{-}(q) : L'(\tfrac{1}{2},f) \ne 0 \right\} \Big|  \ge \frac{11}{8} - \frac{1}{2\!+\!\sqrt{2} \tan(\frac{1}{2\sqrt{2}})}. 
\]
This completes the proof of the corollary. 
\end{proof}

\section*{Acknowledgements}
 V.C. acknowledges support from  NSF grant DMS-2502599. X.L. acknowledges support from Simons Travel Grant 962494 and NSF grant DMS-2302672. M.B.M. acknowledges support from NSF grant DMS-2401461.



\begin{thebibliography}{9999}


\bibitem{BCL} S. Baluyot, V. Chandee and X. Li, {\it Low-lying zeros of a large orthogonal family of automorphic $L$-functions}, available on arXiv: https://arxiv.org/abs/2310.07606.


\bibitem{BBDDM} O. Barrett, P. Burkhardt, J. DeWitt, R. Dorward, and S.J. Miller, {\it One-level density for holomorphic cusp forms of arbitrary level.} Res. Number Theory, 3: Art. 25, 21,2017.



\bibitem{BM}  V. Blomer and D. Mili\'cevi\'c. {\it The second moment of twisted modular L-functions}. Geom. Funct. Anal., 25(2) (2015), 453 - 516.


\bibitem{CCLM} E. Carneiro, V. Chandee, F. Littmann and M. Milinovich, {\it Hilbert spaces and the pair correlation of zeros of the Riemann zeta function }, J. Reine Angrew. Math. (2017) 729, 51-79.

\bibitem{CCM} E. Carneiro, A. Chirre, and M. B. Milinovich, {\it Hilbert spaces and low-lying zeros of L-functions}, Adv. Math. 410
(2022), part B, Paper No. 108748, 48 pp.










\bibitem{Coh et al} P. Cohen, J. Dell, O. E. Gonz\'alez, G. Iyer, S. Khunger, C.-H. Kwan, S. J. Miller, A. Shashkov, A. S. Reina, C. Sprunger, N. Triantafillou, N. Truong, R. V. Peski, S. Willis, and Y. Yang. {\it On the moments of one-level densities in families of holomorphic cusp forms in the level aspect}.  Algebra Number Theory (2024), vol. 18, no. 10, 46 pp. 




\bibitem{DI} J.-M. Deshouillers and H. Iwaniec, {\it Kloosterman sums and Fourier coefficients of cusp forms}, Invent. Math. 70 (1982), 219-288.




\bibitem{FM2} J. Freeman and S. J. Miller, {\it Determining optimal test functions for bounding the average rank in families of L-functions}, SCHOLAR a scientific celebration highlighting open lines of arithmetic research, 97116, Contemp. Math., 655, Centre Rech. Math. Proc., Amer. Math. Soc., Providence, RI, 2015.





\bibitem{GR} I. S. Gradshteyn and I. M. Ryzhik, {\it Table of Integrals, Series, and Products}, Edited by A.Jeffrey and D. Zwillinger. Academic Press, New York, 7th edition, 2007.





\bibitem{Iwaniec} H. Iwaniec, {\it Topics in classical automorphic 
forms}, Graduate Studies in Mathematics, vol. 17 (American Mathematical Society, Providence, RI, 1997).


\bibitem{ILS} H. Iwaniec, W. Luo and P. Sarnak {\it Low lying zeros of families of L-functions.} Inst. Hautes Etudes Sci. Publ. Math. No. 91 (2000), 55-131 (2001).


\bibitem{KaSa} N. Katz and P. Sarnak, {\it Random matrices, Frobenius eigenvalues, and monodromy.} American Mathematical Society Colloquium Publications, 45. American Mathematical Society, Providence, RI, 1999.






















\bibitem{Watt}  G. N. Watson, {\it A treatise on the theory of Bessel functions}, Cambridge University Press, Cambridge 1944.
\end{thebibliography}
\end{document}